\title{ISS Lyapunov strictification via observer design and integral action control for a Korteweg-de Vries equation}
\author{Ismaila Balogoun\thanks{Ismaila Balogoun and Swann Marx are with LS2N, Ecole Centrale de Nantes and CNRS UMR 6004, Nantes, France. {\tt\small E-mail: \{ismaila.balogoun,swann.marx\}@ls2n.fr}.}\and Swann Marx$^{\star}$ \and  Daniele Astolfi \thanks{Daniele Astolfi is with  Universit\'e Lyon 1 CNRS UMR 5007 LAGEPP, France . {\tt\small daniele.astolfi@univ-lyon1.fr}.}}
\def\downparenfill{$\m@th\braceld\leaders\vrule\hfill\bracerd$}
\def\overparen#1{\mathop{\vbox{\ialign{##\crcr\crcr
\noalign{\kern0.4ex}
\downparenfill\crcr\noalign{\kern0.4ex\nointerlineskip}
$\hfil\displaystyle{#1}\hfil$\crcr}}}\limits}
\newcommand{\overbar}[1]{\mkern1.5mu\overline{\mkern-1.5mu#1\mkern-1.5mu}\mkern 1.5mu}
\def\NN{{\mathbb N}}    
\def\RR{{\mathbb R}}    
\def\VV{{\mathcal V}}
\def\cC{{\mathcal C}}
\def\cA{{\mathcal A}}
\def\cS{{\mathcal S}}
\def\cM{{\mathcal M}}
\def\cT{{\mathcal T}}
\def\L{{L^2}}
\begin{document}

\maketitle

\begin{abstract}
The article deals with the output regulation of a
nonlinear Korteweg-de-Vries (KdV) equation subject to a distributed disturbance. The control input and the regulated output are located at the boundary. To achieve this objective, we follow a Lyapunov approach. {To this end} inspired by a strictification methodology
recently introduced in the finite-dimensional context, we construct an ISS-Lyapunov functional for the KdV equation thanks to the use of an observer which is designed following the backstepping approach. Then, thanks to this Lyapunov functional, we apply the forwarding approach in order to solve the desired output regulation problem.

\end{abstract}
\begin{keywords}
  Input-to-state stability, integral controller, Korteweg-de Vries equation, backstepping,
  observer, regulation, forwarding.
\end{keywords}


\section{Introduction}

This paper
deals with the output regulation of a Korteweg-de Vries (KdV) equation. The KdV equation is a mathematical model of waves on shallow water surfaces (see e.g., \cite{cerpa2014} for a survey). Such an equation has been studied in \cite{rosier,chapouly,coron2020small} in the controllability context, in \cite{cerpacoron,lucoron,marxcerpa,tang2015stabilization,swanncpa1} in terms of stabilization, and in \cite{tang2018asymptotic,chu2015asymptotic} where some asymptotic analysis of the equilibrium point
coinciding with the origin are given. We may also mention \cite{marx2018stability,marx2019stability} where 
\textit{input-to state stability} (ISS, in short) properties are obtained via feedback stabilization in presence of a saturated damping (we refer to \cite{prieurmazenc,jacob2020noncoercive,kafnemer2021weak} or the recent survey \cite{prieur} for the characterization of \color{black}ISS Lyapunov functional in the infinite-dimensional context).
Roughly speaking,  the output regulation problem
consists in designing a
feedback-law such that the output converges asymptotically towards a desired  reference and such that 
disturbances are rejected, possibly in spite of some ``small'' model uncertainties. Following the celebrated \textit{internal-model principle}, a solution
to such a problem
exists when references and disturbance (denoted generically as exosignals) are generated by a known
autonomous dynamical system (denoted as exosystem), and a copy of such a system is embedded
in the controller dynamics, see, e.g. \cite{francis1975internal,paunonen2010internal}. 
A well known example is the use of integral action
for tracking and rejecting constant references and disturbances.

Output regulation is an old topic in the finite-dimensional context, 
but many results remain to be found
in the context of nonlinear systems
(see e.g., \cite{astolfi2017integral,giaccagli2021sufficient} for  recent results in this field), and many further research lines
have to be followed when dealing with time-varying references. See, for instance, \cite{astolfi2021repetitive} 
 where a finite-dimensional system is regulated by adding a transport equation for the case of periodic exosignals.
 For infinite-dimensional systems, even if one can mention some old results such as \cite{de2003boundary}, the topic is still very active. 
 A generalization of internal-model principle has
been proposed in \cite{paunonen2010internal}, but the use of integral action to achieve output regulation in the 
presence of constant references/perturbations for infinite dimensional systems has been initiated 
early in \cite{pohjolainen1982robust}. 
Since then, several methods to design an integral action have been developed
for linear dynamics following,
\color{black}for instance, a spectral approach in \cite{pohjolainen1985robust,xu1995robust,paunonen2010internal}, by using operator and semi-group methods in \cite{logemann1997low,xu2014multivariable}, based on frequency domain methods with Laplace transform in \cite{bastin2015stability,coron2015feedback} or by relying on Lyapunov techniques in \cite{lhachemi2020pi}, \cite{dos2008boundary,trinh2016multivariable}. We may also mention \cite{deutscher2017finite,deutscher2017output} which propose to regulate an output towards time-varying references that are generated by a known linear dynamical system or \cite{liard2022boundary} which extends the sliding mode methodology for hyperbolic systems to reject time-varying disturbances.
In the context of nonlinear PDEs, 
we recall also the works 
\cite{natarajan2016approximate,huhtala2021approximate,zhang2020local}
\color{black}

Among all these techniques, in this article,  we are particularly interested in Lyapunov techniques. Indeed, such a methodology has been proved to be efficient to deal with nonlinear systems. Among these techniques, we aim at using the forwarding methodology that has been first introduced for 
finite-dimensional systems 
in cascade form \cite{mazenc1996adding,astolfi2017integral} and then extended to some hyperbolic systems \cite{terrand2019adding} in the regulation context, and to abstract systems \cite{marx2021forwarding} in the stabilization context. In \cite{terrand2019adding}, it is shown that a \textit{strict} Lyapunov functional\footnote{Strict Lyapunov functionals are Lyapunov functionals whose time-derivative is bounded by a negative function depending on the full-state \cite[\S 2.1]{fmazenc}.} is needed for open-loop stable systems that we aim at regulating. In other words, before adding an integral action, we should be able to show that a strict Lyapunov functional for the open-loop dynamics does exist
(or can be obtained after employing a preliminary stabilizing state-feeedback, see, e.g. \cite{astolfi2017integral} in the finite dimensional context). Such Lyapunov functionals are known for hyperbolic systems \cite{bastin2021input}, but it is not the case for the KdV equation. In addition to the existence of this Lyapunov functional, some ISS properties are needed to apply the forwarding method.

In the perspective of addressing the output regulation problem
for the KdV equation, 
we first establish some \color{black}new results that may have their own interest.
In particular, we  study the following 
(nonlinear) KdV equation
\begin{equation}\label{eq_kdv_nonlin}
\left\{
\begin{array}{ll}
w_t+w_{x} +w_{xxx}+ww_x=d_1(t,x)\,, &\\
w(t,0)=w(t,L)=0\,, &\\
w_{x}(t,L)=d_2(t)\,,& \\
w(0,x)=w_0(x)\,,
\end{array}
\right.
\end{equation}
where $(t,x)\in\mathbb{R}_+\times [0,L]$, 
$L>0$,  $d_1$ and $d_2$ denote external inputs that might be seen, for instance, as disturbances, 
and its associated linearized {dynamics} around the origin 
described by
\begin{equation}\label{eq_kdv_lin}
\left\{
\begin{array}{ll}
w_t+w_{x} +w_{xxx}=d_1(t,x)\,, &\\
w(t,0)=w(t,L)=0 \,, &\\
w_{x}(t,L)=d_2(t) \,,& \\
w(0,x)=w_0(x)\,.
\end{array}
\right.
\end{equation}
where $(t,x)\in \mathbb{R}_+ \times [0,L]$. 
We show that the KdV equations \eqref{eq_kdv_nonlin} and \eqref{eq_kdv_lin} 
satisfy an ISS property with respect to the disturbances $d_1,d_2$
by explicitly constructing a strict Lyapunov functional. 
Note that there is no systematic method to build strict Lyapunov functionals either for nonlinear ordinary differential equations or (linear or nonlinear) partial differential equations. However, in many situations, a \textit{weak} Lyapunov functional, i.e., a Lyapunov functional whose time derivative is nonpositive, exists. Often, it also coincides with the energy of the system.
It is however difficult to deduce any quantitative robustness properties from a weak Lyapunov functional, and in particular, ISS properties 
cannot be generically obtained
 from such functions. For this reason, in the finite-dimensional context, a lot of attention has been put in the \textit{strictification} of  weak Lyapunov functions, namely the 
 conception
  of systematic procedures to modify  a weak Lyapunov function in order to make it strict. See, for instance, \cite{fmazenc,praly2019}.  To the best of our knowledge, in the infinite-dimensional context, such an approach has been applied  only  to certain classes of hyperbolic systems \cite{prieurmazenc}.

The first contribution of this paper, that might be seen thus of independent interest with respect to the 
context of output regulation, is the construction of an 
ISS-Lyapunov functional for our KdV equation via a strictification procedure. 
The methodology we propose 
is inspired on \cite{praly2019} and is based on the design of an observer, 
which is also a new result in the KdV equation context and therefore
consists in the second main contribution of this article.
Let us illustrate it. Consider system \eqref{eq_kdv_nonlin}
with no inputs, namely $d_1=d_2=0$. A formal computation shows that the time derivative of the energy $E$ defined as
\begin{equation}
\label{eq:energy}
E(w):= \int_0^L w(t,x)^2 dx
\end{equation}
 yields along solutions 
 \begin{equation}
\label{eq:energy-weak}
    \dot{E}(w):=\frac{d}{dt}\int_0^L w(t,x)^2 dx = -  |w_x(t,0)|^2.
\end{equation}
These computations are sufficient to establish
that the origin is Lyapunov stable
but not to conclude stronger properties (such as 
asymptotic stability or an ISS property if 
we re-introduce the effect of the disturbances 
in the computation of  the derivative of the energy along the trajectories of 
\eqref{eq_kdv_nonlin}).
In other words, the energy $E$ is a weak-Lyapunov functional. Since $w_x(t,0)$ is an exactly observable output as soon as $L\notin \mathcal{N}$ with 
$$
\mathcal{N}:=\left\{2\pi\sqrt{\tfrac{k^2+kl+l^2}{3}}:k,l\in\NN\right\},
$$ 
then, following \cite{praly2019}, our strategy consists in designing an observer with the output $w_x(t,0)$. 
Such an observer is obtained by using the \textit{backstepping} approach
(see, e.g., \cite{KrsticAndrey}) and the  Fredholm operator (see, e.g., \cite{lucoron} or \cite{gagnon2020fredholm}).
The proposed observer differs from the works in 
 \cite{marx2014output,marxcerpa,tang2015stabilization} in the same context of KdV equations because a different measured output is considered.
Finally, by combining the Lyapunov functional derived from the observer analysis and the energy $E$, we  obtain a strict Lyapunov functional, that will be used to establish the desired ISS properties  for systems \eqref{eq_kdv_nonlin} and \eqref{eq_kdv_lin} with respect to the inputs $d_1$ and $d_2$. 

Finally, the third contribution of this article consists in addressing the output regulation problem for constant perturbations and references.
We suppose that a control is acting at the boundary $w_x(t,L)$ of the  KdV equations \eqref{eq_kdv_nonlin} and \eqref{eq_kdv_lin}
and that we want to regulate the output $w_x(t,0)$ to a desired reference $r$.
As a consequence, we extend the plant with an integral action processing the error 
$w_x(t,0)-r$ and we show how to design an output-feedback law. The gain of the controller is obtained via the forwarding technique which is employed to construct a strict Lyapunov functional built upon the ISS Lyapunov functional obtained in the first part of this article. 
Global stability properties
are  established for the linear model 
\eqref{eq_kdv_lin} while only local ones
are proved for the nonlinear one 
\eqref{eq_kdv_nonlin}. Note that, in both cases, we prove pointwise convergence of the tracking error i.e $\lim_{t\to \infty} \vert w_x(t,0)-r\vert=0$.  Also, note that the results of 
\cite[Theorems 1, 2]{terrand2019adding} cannot be used of the shelf 
for the linear model \eqref{eq_kdv_lin}
because in our article
we consider a control input 
acting at the boundary (to be more precise, with an unbounded operator).
Nevertheless, since we know an ISS-Lypaunov function we can 
apply the proposed methodology, similarly to what
has been done for the hyperbolic equations in 
\cite[Theorem 3]{terrand2019adding}.
In the context of output regulation of 
nonlinear PDEs as in  
\eqref{eq_kdv_nonlin}, there exist very few results. Let us mention for instance \cite{zhang2020local} which studies quasilinear hyperbolic systems. 
We recall also \cite{natarajan2016approximate,huhtala2021approximate} in which the local problem is solved for regular linear operators perturbed by nonlinearities 
satisfying a Lipschitz condition.
Note, however, that these results
do not directly apply to the KdV nonlinear model
because the nonlinearity $ww_x$ is not Lipschitz
in the right space.
In this article, we are able to solve 
the local regulation problem  for \eqref{eq_kdv_nonlin} thanks to the strict Lyapunov functional that we established.

This paper is organized as follows. In Section~\ref{sec:prob}, we formulate the problem and state the results about the construction of the ISS Lyapunov functional. In Section~\ref{sec:obs}, an observer is designed using a Fredholm operator. Section~\ref{sec:proof} contains the proofs of the ISS results of the KdV equations under consideration. Section~\ref{Add} states and proves some regulation results for the KdV equation. Finally, Section~\ref{sec:con} collects concluding remarks and discuss some remaining open problems.

\medskip

\textbf{Notation:} Set $\mathbb{R}_+=[0,\infty)$. The term $w_t$  stands for the partial derivative of the function $w$ with respect to $t$. The term $w_x$ (resp. $ w_{xx}, w_{xxx}$) stands for the first (resp. second and third) order partial derivative of the function $w$ with respect to $x$.
When a function $V$ (resp. $M$) depends only the time variable $t$ (resp. the space variable $x$), we use the notation $\dot V(t) = \frac{d}{dt} V(t)$ (resp. $M^\prime(x):= \frac{d}{dx} M(x)$). 
 The functional space $L^2(0,L)$ denotes the set of (Lebesgue) measurable functions $f$ such that $\int_0^L |f(x)|^2 dx<+\infty$. The associated norm is $\Vert f\Vert_{L^2}^2:=\int_0^L |f(x)|^2 dx.$ 
We define the functional space  $C^2([0,T])$ as  the class of continuous functions on $[0,T]$, which have continuous derivatives of order two on $[0,T]$, the  functional spaces $H^k(0,L)$. 
For any $p\in [1,\infty]$, we use the standard notation 
 $W^{1,p}(0,L)$  for the Sobolev space defined as  $W^{1,p}(0,L):=\lbrace u\in L^p(0,L):\: \dot{u}\in L^p(0,L)\rbrace$. 
\medskip

\section{Construction of an ISS Lyapunov functional}
\label{sec:prob}

The objective of this section is to 
study the ISS properties of the KdV models
\eqref{eq_kdv_nonlin} and \eqref{eq_kdv_lin}
and to establish the existence of a strict
ISS-Lypaunov functional. 
The proof of the main result is postponed to 
Section~\ref{sec:proof}.
Furthermore, as mentioned in the introduction, the proposed 
ISS-Lyapunov functional  will be used in the sequel 
in order to design an output feedback 
integral action controller, see Sections~\ref{sec_reg_linear} and \ref{sec_reg_nonlinear}. Note that we will not provide further discussions on the well-posedness of \eqref{eq_kdv_lin} and \eqref{eq_kdv_nonlin}, since it is not the main topic of this paper. Interested readers may refer to \cite{rosier,corcre,bona2003nonhomogeneous} for more information on this issue. We just emphasize on the fact that, when looking at regular solutions, we will consider initial conditions in the space
\begin{equation}
\label{def-H3L}
H^3_L(0,L):=\lbrace w\in H^3(0,L):\: w(0)=w(L)=0,\: w^\prime(L) = d_2(0)\rbrace,
\end{equation}
with $d_2$ being the perturbation entering at the boundary condition in \eqref{eq_kdv_nonlin} or \eqref{eq_kdv_lin}.
In this case, for any $T>0$, solutions $w$ to \eqref{eq_kdv_nonlin} or \eqref{eq_kdv_lin} belong to the functional space $C(0,T;H^3(0,L))\cap C^1(0,T;L^2(0,L))$ and satisfy, for all $t\in [0,T]$, the additional compatibility conditions $w(t,0)=w(t,L)=~0$, $w_x(t,L)=d_2(t)$.

Next, we state the following definition of
input-to-state stability 
for systems \eqref{eq_kdv_nonlin} and \eqref{eq_kdv_lin}.

\medskip

\begin{definition}\label{df}
System \eqref{eq_kdv_nonlin} (resp.  \eqref{eq_kdv_lin}) is said to be (exponentially)  input-to-state stable (ISS),  if there exist positive constants $c_0$, $c_1$, $c_2$, $\mu$, such that any solution $w\in C^0(\RR_+;L^2(0,L))\cap L^2(\RR_+;H^1(0,L))$   to \eqref{eq_kdv_nonlin} (respectively to \eqref{eq_kdv_lin})  satisfies for all $t\geq0$
\begin{equation}
    \Vert w(t,\cdot)\Vert_{L^2}\leq c_0e^{-\mu t}\Vert w_0\Vert_{L^2}   + c_1\int_{0}^t e^{-\mu(t-s)}\|d_1(s,\cdot)\|_{L^2}ds  + c_2\int_{0}^t e^{-\mu(t-s)}|d_2(s)|ds , 
\label{def-iss}
\end{equation}
for any initial condition $w_0\in L^2(0,L)$,  
   $d_1\in L^2([0,t];L^2(0,L))$ and  $d_2\in L^2(0,t)$.
 Furthermore, if there exists $\delta>0$ such that
  \eqref{def-iss} holds only  with
  $w_0, d_1, d_2$ satisfying
  $$
    \Vert w_0 \Vert_{\L} + 
\lim_{t\to\infty}  \int_{0}^t e^{-\mu(t-s)}\Big(\|d_1(s,\cdot)\|_{L^2} + 
|d_2(s)| \Big)ds 
\,\leq \, 3\delta
  $$
  then the system \eqref{eq_kdv_nonlin} (resp. \eqref{eq_kdv_lin}) is said to be locally 
 (exponentially)  input-to-state stable (LISS).
\end{definition}

\medskip

In the literature, the definition 
\eqref{def-iss}
is related to the notion of the 
``Fading Memory Input-to-State Stability'', see e.g \cite[Chapter 7]{karafyllis2019input}, 
due to the presence of weighting
exponential functions used in the  norms characterizing the gain of the signals $d_1$ and $d_2$. Thus, with some  abuse of language, we call it Input-to-State Stability in this paper.
Also, it is important to underline that such a definition allows to consider 
a large class of disturbances $d_1,d_2$, which includes, among others, constant and periodic signals. 

In general, proving the ISS property defined above needs the knowledge of the trajectories of the system, which is  not an easy task. Therefore, in practice, ISS Lyapunov functionals are used to prove the desired ISS properties. To this end, we recall the result in  \cite[Theorem 3]{prieur}, showing that  the existence of an ISS Lyapunov functional is sufficient to establish the ISS properties of Definition \ref{df}.

Before stating the definition of such Lyapunov functionals, we recall now which type of derivatives we are going to use in this article. Indeed, for any Lyapunov functional $V$ for 
solutions to 
\eqref{eq_kdv_lin} or \eqref{eq_kdv_nonlin}, one has the following equality:
\begin{equation}
\label{eq:frechet}
\dot  V (w)= \frac{d}{dt} V(w) = D_V(w)w_t, 
\end{equation}
where $D_V(w)$ denotes the Fr\'echet derivative (see for instance \cite[Definition A.5.33]{curtain2020introduction} for the definition ). The proof of this equality follows the same path than the one given in \cite[Lemma 11.2.5]{curtain2020introduction}.  For instance, this means that the time derivative along solutions to \eqref{eq_kdv_lin} of $E(w) = \Vert w\Vert^2_\L$ can be computed as
\begin{equation}
\dot E(w) = 2\int_0^L (-w_x-w_{xxx} + d_1)w \, dx,
\end{equation}
and, for time derivative along solutions to \eqref{eq_kdv_nonlin}:
\begin{equation}
   \dot  E(w) = 2\int_0^L (-w_x-w_{xxx} - ww_x + d_1)w \, dx.
\end{equation}
showing that the time does not play any role when using the Fr\'echet derivative. This is why the time will disappear when differentiating Lyapunov functionals in the rest of the paper.

We are now in position to state the following definition of ISS Lyapunov functional.

\medskip

\begin{definition}
 A function $V: L^2(0,L)\to\RR$ is said to be an  exponentially \textit{ISS Lyapunov functional} for the system \eqref{eq_kdv_nonlin} (resp. \eqref{eq_kdv_lin}), if there exist positive constants $\underline \alpha, \bar \alpha, \alpha, \sigma_1, \sigma_2 $ such that:
\begin{enumerate}
    \item[(i)] For all $w\in L^2(0,L)$,
\begin{equation}
    \label{eq:Lyap-sandwich}
    \underline\alpha\Vert w\Vert_\L^2\leq V(w)\leq \bar\alpha\Vert w\Vert_\L^2\,.
\end{equation}
    \item[(ii)]  The time derivative of $V$ along the trajectories of \eqref{eq_kdv_nonlin} (resp. \eqref{eq_kdv_lin}) satisfies
    \end{enumerate}
\begin{equation}
    \label{eq:Lyap-ISS}
 \dot{V}(w)\leq-\alpha\Vert w\Vert_\L^2+\sigma_1\Vert d_1\Vert_\L^2
        + \sigma_2\vert d_2\vert^2\,.
\end{equation}
for any $w\in \L(0,L)$, $d_1\in \L(0,L)$ and $d_2\in \RR$.
If there exists $\delta>0$ such that 
(ii) holds only if 
$\Vert w\Vert_{\L} + \Vert d_1\Vert_\L + |d_2|
\leq 3\delta$
 then $V$ is said to be a locally exponentially  ISS Lyapunov functional for the system \eqref{eq_kdv_nonlin}. 
 \end{definition}
 
 \medskip
 
\color{black}
As explained in the introduction, for any $L\notin \mathcal{N}$, 
the energy function  in $L^2$-norm
defined in \eqref{eq:energy} 
is a weak Lyapunov functional for the system
\eqref{eq_kdv_nonlin} (resp. \eqref{eq_kdv_lin})  in  view of \eqref{eq:energy-weak}.
Indeed, on the the right hand side  of the inequality, we have a function
which depends only on a part the state $w(t,x)$, i.e., $w_x(t,0)$.
 From \eqref{eq:energy-weak}, one can deduce that the origin of the system of \eqref{eq_kdv_nonlin} with $d_1=d_2=0$ is Lyapunov stable. 
 In order to show also the exponential stability properties of the origin, 
 one can follow \cite[Proposition 3.3]{rosier}, by using the 
 fact  that $w_x(t,0)$ is exactly observable as soon as $L\notin \mathcal{N}$: indeed, using the related observability inequality, and integrating \eqref{eq:energy-weak} between $0$ and $T$, exponential stability can be established as illustrated in \cite[\S 4.1.]{cerpa2014}. 
 However,  nothing can be easily said in the  presence
 of disturbances. As a consequence, 
 in order to show the desired ISS properties of the  system
\eqref{eq_kdv_nonlin} (resp. \eqref{eq_kdv_lin}),  we follow a different approach here: we aim at constructing a strict ISS Lyapunov functional, which is a new result, to the best of our knowledge. Using the observability of the output $w_x(t,0)$, we can follow the methodology described in \cite{praly2019} and that can be decomposed as follows.  First, we design an observer for the output $w_x(t,0)$.  Then, we consider the sum of the Lyapunov functional coming from the latter observer design and the natural energy, and  we prove that this sum boils down to be a strict Lyapunov functional. Finally, thanks to this strict Lyapunov functional, we deduce ISS properties for systems \eqref{eq_kdv_nonlin} and \eqref{eq_kdv_lin}. These properties are written more precisely in the following theorem, which is our first main result.

\medskip

\begin{theorem}
\label{theo}
Suppose that $L\notin \mathcal{N}$. Then, there exists a functional $W:  L^2(0,L)\to  \mathbb{R}_+$ such that, 
the function $V(w) := W(w)+ E(w)$
with $E$ being the energy in $L^2$-norm defined in  
\eqref{eq:energy}, is 
\begin{enumerate}
    \item[(a)] an exponentially ISS Lyapunov functional for the system \eqref{eq_kdv_lin};
    \item[(b)] a locally exponentially ISS Lyapunov functional for the system \eqref{eq_kdv_nonlin}.
\end{enumerate}

Moreover, the functional $W$ is given by
$W(w):=  \Vert\Pi(w)\Vert^2_{L^2}$ with $\Pi$ being a continuous
linear operator from $L^2(0,L)$ to $L^2(0,L)$ with a continuous inverse.
\end{theorem}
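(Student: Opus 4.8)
The plan is to construct the operator $\Pi$ (equivalently the functional $W$) by designing a backstepping-type observer for the output $w_x(t,0)$, and then to show that the sum $V=W+E$ is strict. More precisely, following the strictification philosophy of \cite{praly2019}, I would introduce an observer state $\hat w$ driven by the measured output $y=w_x(t,0)$, and study the observer error $\tilde w = w - \hat w$. The backstepping method with a Fredholm transformation should produce an invertible change of variables $\tilde w \mapsto \Pi(\tilde w)$ mapping the error dynamics onto a target system for which a Lyapunov functional is easy to exhibit and whose decay is controlled by the observation error, i.e. by $w_x(t,0)$. This is precisely the point where the exact observability of $w_x(t,0)$ for $L\notin\mathcal N$ is used: it guarantees that the backstepping kernel equations (the Fredholm operator) are well-posed and that $\Pi$ is a bounded linear operator with bounded inverse, which yields the claimed representation $W(w)=\|\Pi(w)\|_{L^2}^2$ together with the sandwich bound $\underline\alpha\|w\|_{L^2}^2\le W(w)\le\bar\alpha\|w\|_{L^2}^2$.

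Next I would verify property (i), the two-sided bound \eqref{eq:Lyap-sandwich} for $V=W+E$. Since $E(w)=\|w\|_{L^2}^2$ is already equivalent to the squared $L^2$-norm and $W(w)=\|\Pi(w)\|_{L^2}^2$ is equivalent to it by boundedness of $\Pi$ and $\Pi^{-1}$, their sum is clearly sandwiched between two positive multiples of $\|w\|_{L^2}^2$. The content therefore lies entirely in property (ii). I would compute $\dot V(w) = \dot W(w) + \dot E(w)$ along the trajectories of \eqref{eq_kdv_lin} (resp. \eqref{eq_kdv_nonlin}) using the Fr\'echet-derivative formalism of \eqref{eq:frechet}. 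The term $\dot E$ produces, as in \eqref{eq:energy-weak}, the negative boundary term $-|w_x(t,0)|^2$ plus disturbance contributions; the term $\dot W$, obtained from the observer/target-system analysis, should produce a genuinely negative term of the form $-\alpha\|w\|_{L^2}^2$ but will in turn generate a \emph{positive} multiple of the boundary term $|w_x(t,0)|^2$. The key algebraic step is then to choose the relative weighting of $W$ and $E$ (or equivalently a gain inside $\Pi$) so that the dangerous positive $|w_x(t,0)|^2$ coming from $\dot W$ is absorbed by the negative $-|w_x(t,0)|^2$ coming from $\dot E$, leaving a clean coercive estimate $\dot V \le -\alpha\|w\|_{L^2}^2$ in the disturbance-free case, and then $\dot V\le -\alpha\|w\|_{L^2}^2+\sigma_1\|d_1\|_{L^2}^2+\sigma_2|d_2|^2$ once the disturbances are reinstated and bounded via Cauchy--Schwarz and Young's inequalities.

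The main obstacle I anticipate is controlling the boundary terms generated when differentiating $W$ along the PDE. Differentiating $\|\Pi(w)\|_{L^2}^2$ and integrating by parts against the third-order operator $w_{xxx}$ will create several boundary contributions at $x=0$ and $x=L$; showing that, after using the homogeneous boundary conditions $w(t,0)=w(t,L)=0$ and handling $w_x(t,L)=d_2$, these collapse into a controllable combination of $|w_x(t,0)|^2$ and disturbance terms is the delicate part. This is exactly where the structure of the Fredholm kernel must be exploited, and where the well-posedness of the kernel equations (hence the restriction $L\notin\mathcal N$) is essential.

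For part (b), the nonlinear case, I would repeat the computation for \eqref{eq_kdv_nonlin}, where the additional term $ww_x$ appears in $\dot V$. In the energy computation the nonlinear term integrates to a boundary contribution by the identity $\int_0^L w^2 w_x\,dx = \tfrac{1}{3}[w^3]_0^L=0$ under the homogeneous boundary conditions, so $\dot E$ is unaffected; the genuinely new difficulty is the cross term produced by $ww_x$ in $\dot W$. Because this term is quadratic in $w$ (not Lipschitz in $L^2$), I expect it can only be dominated by the coercive term $-\alpha\|w\|_{L^2}^2$ on a neighborhood of the origin, using an embedding such as $H^1(0,L)\hookrightarrow L^\infty(0,L)$ to bound $\|w\|_{L^\infty}$ and a smallness assumption on $\|w\|_{L^2}$. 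This is precisely why the nonlinear statement is only \emph{local}: one restricts to $\|w\|_{L^2}+\|d_1\|_{L^2}+|d_2|\le 3\delta$ so that the higher-order term is a small perturbation of the linear estimate, yielding the locally exponentially ISS Lyapunov inequality \eqref{eq:Lyap-ISS}.
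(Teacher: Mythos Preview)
Your proposal is correct and follows essentially the same approach as the paper: an observer for the output $w_x(t,0)$ is built via a Fredholm backstepping transformation $\overline\Pi$ (well-posed precisely when $L\notin\mathcal N$), the functional $W$ is a rescaling of $\|\overline\Pi^{-1}(w)\|_{L^2}^2$, and the strictification works by cancelling the positive $|w_x(0)|^2$ produced by $\dot W$ against the negative $-|w_x(0)|^2$ from $\dot E$. The paper implements the step you describe as ``choosing the relative weighting'' by adding and subtracting $p(x)w_x(t,0)$ in the $w$-dynamics so as to recast it in observer-error form with an extra distributed disturbance $-p(x)w_x(t,0)$, then applying the ISS inequality already proved for the error system; for the nonlinear part, the cubic remainder from $ww_x$ in the transformed coordinates is bounded directly by $\bar f\|\gamma\|_{L^2}^3$ (via an argument from \cite{lucoron}) rather than through an $H^1\hookrightarrow L^\infty$ embedding, but your local-absorption intuition is exactly right.
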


\medskip
The proof of Theorem~\ref{theo} is postponed to Section~\ref{sec:proof}.
In particular, in the next section, we will first show how to design an ISS observer 
for the linearized system \eqref{eq_kdv_lin} by means of the output $w_x(t,0)$. 
The proposed design is based on the backstepping method, see, e.g., \cite{KrsticAndrey} and on 
the Fredholm transformation, see, e.g., \cite{gagnon2020fredholm,lucoron}. Then,
in Section~\ref{sec:proof},
we will use the ISS-Lyapunov functional associated to such an observer to build the functional $W$ claimed in the statement of 
Theorem~\ref{theo}.

The following result will be also useful when dealing with the regulation problem. It is an ISS result for a perturbed version of \eqref{eq_kdv_nonlin} with non-constant (small) coefficients. Its proof is omitted for compactness since it follows the same path used in  the proof of Theorem~\ref{theo}, item (b). 

\medskip

\begin{corollary}\label{corollary:ISS-nonlineaire}
Suppose $L\notin \mathcal{N}$. There exists positive real numbers $\bar a, \bar b$ such that,
for any $a\in C([0,L])$, $b\in C^1([0,L])$ satisfying 
$\Vert a\Vert_{\infty}\leq \bar a$ and 
$\Vert b\Vert_{W^{1,\infty}}\leq \bar b$,
the Lyapunov function $V$ established 
in Theorem~\ref{theo} 
is a locally exponentially ISS Lyapunov functional  for the following system
\begin{equation}\label{eq_kdv_nonlinear_perturbed}
\left\{
\begin{array}{lll}
 w_t+  w_{x} + w_{xxx}+ w w_{x}=
 a(x)w +  b(x)w_x \,, &\qquad & (t,x)\in  \RR_+\times [0,L]\,,\\
 w(t,0)= w(t,L) = 0\,, && t\in \RR_+\,,\\
 w_{x}(t,L)=d_2(t)\,, && t\in \RR_+\,, \\
 w(0,x)= w_0(x), && x\in [0,L]\,.
\end{array}
\right.
\end{equation}
Moreover, it is an exponential ISS Lyapunov functional
for the linearized dynamics of \eqref{eq_kdv_nonlinear_perturbed}, i.e. in absence of the term $ww_x$.
\end{corollary}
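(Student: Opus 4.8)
The plan is to verify the two defining conditions \eqref{eq:Lyap-sandwich}--\eqref{eq:Lyap-ISS} of an (L)ISS Lyapunov functional for $V=W+E$ along the trajectories of \eqref{eq_kdv_nonlinear_perturbed}, reusing as much as possible of the computation carried out for Theorem~\ref{theo}, item (b). The coercivity bound \eqref{eq:Lyap-sandwich} is immediate: since $W(w)=\Vert\Pi(w)\Vert_\L^2$ with $\Pi$ a continuous linear isomorphism of $L^2(0,L)$, the constants $\underline\alpha,\bar\alpha$ depend only on $\Pi$ and not on the right-hand side of the equation, so \eqref{eq:Lyap-sandwich} holds verbatim. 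Hence the whole work lies in re-establishing the dissipation inequality \eqref{eq:Lyap-ISS}, here in the form $\dot V(w)\le -\alpha\Vert w\Vert_\L^2+\sigma_2|d_2|^2$, there being no $d_1$ term in \eqref{eq_kdv_nonlinear_perturbed}.

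Next I would exploit the linearity of the Fr\'echet derivative in $w_t$ together with the fact that condition (ii) is a \emph{pointwise} inequality in the state, hence applicable to the present $w$ even though it solves the perturbed equation. Writing the dynamics of \eqref{eq_kdv_nonlinear_perturbed} as $w_t=\big(-w_x-w_{xxx}-ww_x\big)+\big(a(x)w+b(x)w_x\big)$, the first group is exactly the vector field of the nominal equation \eqref{eq_kdv_nonlin} with $d_1=0$ and the same boundary data $w_x(t,L)=d_2$, so that
\begin{equation*}
\dot V(w)=\underbrace{D_V(w)\big(-w_x-w_{xxx}-ww_x\big)}_{\le\,-\alpha\Vert w\Vert_\L^2+\sigma_2|d_2|^2}+\underbrace{D_V(w)\big(a(x)w+b(x)w_x\big)}_{=:\Delta},
\end{equation*}
where the first bracket is controlled, locally, by the proof of Theorem~\ref{theo}(b). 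It then remains to show that the perturbation term can be absorbed, i.e. $\Delta\le C(\bar a+\bar b)\Vert w\Vert_\L^2$ for a constant $C$ independent of $a,b$.

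To estimate $\Delta=D_E(w)(aw+bw_x)+D_W(w)(aw+bw_x)$ I would treat the two pieces separately. For the energy part, $D_E(w)h=2\int_0^L w\,h\,dx$, so $D_E(w)(aw)\le 2\bar a\Vert w\Vert_\L^2$, while $2\int_0^L b\,w\,w_x\,dx=\int_0^L b\,(w^2)_x\,dx=-\int_0^L b'w^2\,dx$ after integrating by parts and using $w(t,0)=w(t,L)=0$, which is bounded by $\bar b\Vert w\Vert_\L^2$. The contributions of $a(x)w$ to $D_W$ are equally harmless since $\Pi$ is bounded on $L^2$. The genuine obstacle is $D_W(w)(bw_x)=2\langle\Pi w,\Pi(bw_x)\rangle$, because a priori $b w_x$ is not controlled by $\Vert w\Vert_\L$. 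Here I would use the explicit integral (Fredholm) structure $\Pi=\mathrm{Id}-K$ with the smooth kernel $k(x,y)$ produced by the backstepping construction of Section~\ref{sec:obs}: writing out $\Pi(bw_x)$ and integrating by parts in $y$ inside the occurrence of $K$, the boundary contributions vanish thanks to $w(t,0)=w(t,L)=0$ and every surviving term carries the derivative on the smooth kernel rather than on $w$, yielding $\Pi(bw_x)=bw_x+\tilde K w$ with $\tilde K$ bounded on $L^2$ of norm $\lesssim\bar b$. The same manoeuvre applied to the remaining factor $2\langle\Pi w, bw_x\rangle$ transfers the last derivative onto $w$ via $\int_0^L(\,\cdot\,)_x\,w\,dx$, again killing the boundary terms. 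Altogether this produces $|D_W(w)(bw_x)|\le C\bar b\Vert w\Vert_\L^2$ with $C$ depending only on $k$ and its first derivatives.

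Combining the estimates gives $\Delta\le C(\bar a+\bar b)\Vert w\Vert_\L^2$, whence $\dot V(w)\le -\big(\alpha-C(\bar a+\bar b)\big)\Vert w\Vert_\L^2+\sigma_2|d_2|^2$. Choosing $\bar a,\bar b$ small enough that $C(\bar a+\bar b)\le\alpha/2$ yields \eqref{eq:Lyap-ISS} with $\alpha$ replaced by $\alpha/2$, proving that $V$ is a locally exponentially ISS Lyapunov functional for \eqref{eq_kdv_nonlinear_perturbed}; for the linearized dynamics (no $ww_x$) the nominal bound is furnished by Theorem~\ref{theo}(a), which is global, so the very same argument delivers a global exponential ISS Lyapunov functional. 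The main difficulty, as indicated, is the control of the unbounded perturbation $b(x)w_x$ through the operator $\Pi$, and its resolution rests entirely on the smoothness of the backstepping kernel and on the homogeneous Dirichlet conditions that annihilate all boundary terms in the integrations by parts.
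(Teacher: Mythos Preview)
Your proposal is correct and follows essentially the same path the paper indicates (redoing the computation of Theorem~\ref{theo}(b) with the extra terms $a(x)w+b(x)w_x$), just organized more cleanly as a perturbation of the Fr\'echet derivative. One minor imprecision: the backstepping kernel $Q$ defining $\Pi=c\,\overline\Pi^{-1}=c(\mathrm{Id}+K_Q)$ is only known to lie in $H^1_0((0,L)\times(0,L))$, not to be smooth, but this is exactly the regularity needed for your integrations by parts (so that $Q_x,Q_z\in L^2$ and the boundary traces vanish), hence the argument goes through unchanged.
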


\section{Observer design for a Linear KdV equation}
\label{sec:obs}
In this section,  we design an observer for 
the linear KdV equation \eqref{eq_kdv_lin} with $y(t)=w_x(t,0)$  defined as the output function. 
 In particular, we consider the following system
\begin{equation}
 \left\{
\begin{array}{lll}
w_t+w_{x} +w_{xxx}=d_1 \,, &\qquad & (t,x)\in \RR_+\times [0,L] \,,\\
w(t,0)=w(t,L)=0 \,, && t \in \RR_+ \,,\\
w_{x}(t,L)= d_2(t) \color{black}\,, && t \in \RR_+ \,, \\
w(0,x)=w_0(x) \,, & &x \in [0,L] \,, \\
y(t)=w_{x}(t,0) \,, && t \in \RR_+ \,,
\end{array}
\right.
\label{eqo}
\end{equation}
and we design an observer 
with a distributed correction term
of the form
\begin{equation}
    \left\{
\begin{array}{lll}
\widehat{w}_t+\widehat{w}_{x} +\widehat{w}_{xxx}+ p(x)[y(t)
-\widehat{w}_x(t,0)]=0 \,,  &\qquad& (t,x)\in \RR_+\times [0,L] \,, \\
\widehat{w}(t,0)=\widehat{w}(t,L)=0 \,, && t \in \RR_+ \,,\\
\widehat{w}_{x}(t,L)=0 \,, && t \in \RR_+ \,,\\
\widehat{w}(0,x)= \widehat w_0(x) \,, && x \in [0,L] \,,
\end{array}
\right.
\label{eqob}
\end{equation}
where $p$ is an output injection gain to be designed.  Note that the well-posedness of system \eqref{eqob} can be proved by following the same approach as in \cite{marx2014output}.
We define now the estimation error coordinates as follows
$$
\widehat w \mapsto \widetilde w : = w- \widehat w
$$
mapping system 
\eqref{eqob} into 
\begin{equation}
\left\{
\begin{array}{lll}
\widetilde{w}_t+\widetilde{w}_{x} +\widetilde{w}_{xxx}
- p(x)\widetilde{w}_x(t,0)=d_1 \,, &\qquad & (t,x)\in \RR_+\times [0,L] \,,\\
\widetilde{w}(t,0)=\widetilde{w}(t,L)=0\,, && t \in \RR_+ \,,\\
\widetilde{w}_x(t,L)=d_2(t) \,, && t \in \RR_+ \,,\\
\widetilde{w}(0,x)=\widetilde w_0(x)\,, && x \in [0,L] \,.
\end{array}
\right.
\label{eqereur*}
\end{equation}
The objective of this section is to show that 
the gain $p$ can be selected so that to guarantee  
the system \eqref{eqereur*} to be ISS 
with respect to the disturbances $d_1, d_2$. 
This, in turns, guarantees the convergence 
of the solutions of the observer 
\eqref{eqob} towards the trajectories 
of the observed plant
\eqref{eqo} in the unperturbed case 
 ($d_1=0$, $d_2=0$), 
 and desirable bounded-input bounded-output
 properties otherwise.
 This is established in the next
 theorem claiming the existence of an 
 ISS-Lyapunov functionals for the system 
 \eqref{eqereur*} under an appropriate choice of the 
 function $p$.
 \color{black}
 \medskip

\begin{theorem}
\label{theo:ISS-Lyap-obs}
Suppose that $L\notin \mathcal{N}$. For any $\lambda>0$,  there exist 
a non-zero function $p\in L^2(0,L)$, 
a Lyapunov functional $U:L^2(0,L)\to\RR$
and  positive constants $\underline c, \bar c,  \varrho_1, \varrho_2$ satisfying the following properties.
\begin{itemize}
    \item[(i)]  For all $\widetilde w\in L^2(0,L)$
     \begin{equation}
         \label{eq:Usandwich}
         \underline c \Vert \widetilde w\Vert_{L^2}^2 \leq  U(\widetilde w) \leq \bar c\Vert \widetilde w\Vert_{L^2}^2 \,.
     \end{equation}
     \item[(ii)] The time derivative of $U$ along the trajectories of 
    \eqref{eqereur*}  satisfies, for all $w\in \L(0,L)$,  $d_1\in \L(0,L)$ and $d_2 \in \RR$,
    \begin{equation}
         \label{eq:Uder}
 \dot{U}(\widetilde w)\leq -  \lambda U(\widetilde w) +  \varrho_1\Vert d_1\Vert_\L^2
+  \varrho_2|d_2|^2  .
     \end{equation}
\end{itemize}
Moreover, the  functional  $U$ is given by 
$U(w) := \Vert\overline \Pi^{-1}(w)\Vert^2_{L^2}$, 
with $\overline \Pi$ being a continuous linear operator
from $\L(0,L)$ to $\L(0,L)$ with continuous inverse.
\end{theorem}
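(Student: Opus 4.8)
The plan is to build the operator $\overline\Pi$ by a backstepping design based on a Fredholm transformation, chosen so that it conjugates the error dynamics \eqref{eqereur*} to an exponentially stable target system on which the natural $L^2$-energy is an ISS-Lyapunov functional. Concretely, I would look for the direct transformation $z=\overline\Pi^{-1}\widetilde w$ of Fredholm type,
\[
z(x)=\widetilde w(x)-\int_0^L k(x,y)\widetilde w(y)\,dy,
\]
with kernel $k$ and injection gain $p$ to be determined, so that whenever $\widetilde w$ solves \eqref{eqereur*} the variable $z$ solves the target system
\[
\left\{
\begin{array}{ll}
z_t+z_x+z_{xxx}+c\,z=\overline\Pi^{-1}d_1, &\\
z(t,0)=z(t,L)=0, &\\
z_x(t,L)=d_2, &
\end{array}
\right.
\]
for a damping coefficient $c>0$ to be fixed later (slightly larger than $\lambda/2$). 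Differentiating $z$ in time, substituting $\widetilde w_t=-\widetilde w_x-\widetilde w_{xxx}+p(x)\widetilde w_x(t,0)+d_1$, and integrating by parts three times in $\int_0^L k(x,y)\widetilde w_{yyy}(y)\,dy$, I would match the resulting expression with the target dynamics. Imposing that all terms proportional to $\widetilde w_x(t,0)$ cancel yields the explicit formula for the gain $p$ in terms of the boundary values of $k$, while matching the interior and boundary terms yields a well-posed third-order kernel equation on the square $(0,L)^2$ with appropriate boundary conditions (in particular $k_x(L,\cdot)=0$, which keeps the boundary condition $z_x(t,L)=d_2$ clean). The extra damping $c\,z$ is precisely what the Fredholm term is designed to produce.

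The hard part is to show that this kernel problem admits a solution and, above all, that the resulting $\overline\Pi^{-1}=\Id-K$ (with $K$ the integral operator of kernel $k$) is a bounded operator on $L^2(0,L)$ admitting a bounded inverse $\overline\Pi$. This is exactly where the assumption $L\notin\mathcal{N}$ is essential: as for the controllability of the linearized KdV equation, the spectral condition $L\notin\mathcal{N}$ guarantees that the operator attached to the kernel problem has trivial kernel, which is what restores invertibility. I would follow the Fredholm-operator construction of \cite{lucoron,gagnon2020fredholm}: recast the kernel problem as an operator equation, establish existence of $k\in L^2((0,L)^2)$ by a compactness/Fredholm-alternative argument, and prove invertibility of $\overline\Pi^{-1}$ by combining the structure of the transformation with the observability inequality available for the output $w_x(t,0)$ when $L\notin\mathcal{N}$. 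Continuity of $\overline\Pi^{-1}$ and of $\overline\Pi$ then follows from the $L^2$-regularity of $k$ and of the inverse kernel.

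Once the transformation is in hand, the rest is routine. Setting $U(\widetilde w):=\Vert z\Vert_{L^2}^2=\Vert\overline\Pi^{-1}\widetilde w\Vert_{L^2}^2$, the bounds \eqref{eq:Usandwich} follow immediately from boundedness of $\overline\Pi$ and $\overline\Pi^{-1}$, with $\underline c=\Vert\overline\Pi\Vert^{-2}$ and $\bar c=\Vert\overline\Pi^{-1}\Vert^2$. For \eqref{eq:Uder}, I would differentiate $U$ along the target dynamics exactly as in the computation leading to \eqref{eq:energy-weak}: the boundary conditions $z(t,0)=z(t,L)=0$ annihilate the transport term, the third-order term contributes $-z_x(t,0)^2+d_2^2$, the damping contributes $-2c\Vert z\Vert_{L^2}^2$, and the source $\overline\Pi^{-1}d_1$ is absorbed by Young's inequality, giving
\[
\dot U(\widetilde w)\le -(2c-\eta)\Vert z\Vert_{L^2}^2+d_2^2+\tfrac{1}{\eta}\Vert\overline\Pi^{-1}d_1\Vert_{L^2}^2 .
\]
Choosing $\eta>0$ small and $c=(\lambda+\eta)/2$ makes $2c-\eta=\lambda$, and bounding $\Vert\overline\Pi^{-1}d_1\Vert_{L^2}\le\Vert\overline\Pi^{-1}\Vert\,\Vert d_1\Vert_{L^2}$ yields \eqref{eq:Uder} with $\varrho_1=\tfrac{1}{\eta}\Vert\overline\Pi^{-1}\Vert^2$ and $\varrho_2=1$, which completes the proof.
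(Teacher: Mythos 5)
Your overall strategy is the same as the paper's: a Fredholm-type transformation conjugating the error system \eqref{eqereur*} to a damped KdV target, existence of the kernel via the results of \cite{lucoron} (this is exactly where $L\notin\mathcal{N}$ enters), continuity and invertibility of the transformation, and then $U(\widetilde w)=\Vert \overline\Pi^{-1}\widetilde w\Vert_{L^2}^2$ with the sandwich bound \eqref{eq:Usandwich} and a Young-inequality estimate for \eqref{eq:Uder}. The only structural difference is the direction in which you set up the transformation: you construct the inverse kernel directly ($z=\overline\Pi^{-1}\widetilde w$), whereas the paper first builds $\overline\Pi$ mapping the target $\gamma$ into $\widetilde w$ (kernel $P$, system \eqref{noyau}) and then writes the inverse kernel $Q$ (system \eqref{noyau2}); this difference is immaterial, and your kernel problem is precisely the paper's \eqref{noyau2} up to sign.

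There is, however, one genuine error in your target system. When $d_2\neq 0$, integrating $\int_0^L k(x,y)\widetilde w_{yyy}(y)\,dy$ by parts produces the boundary term $k_y(x,L)\widetilde w_y(t,L)=k_y(x,L)d_2(t)$, so the disturbance $d_2$ enters the $z$-equation not only through the boundary condition $z_x(t,L)=d_2$ but also as a \emph{distributed} source: the correct target is $z_t+z_x+z_{xxx}+cz=\overline\Pi^{-1}d_1-k_y(x,L)d_2$, the analogue of the term $Q_z(x,L)d_2$ in the paper's \eqref{eqgama_d}. You cannot eliminate it by imposing $k_y(x,L)=0$: the kernel must already satisfy $k(x,0)=k(x,L)=0$ (to kill the $\widetilde w_{yy}$ boundary terms), $k(0,y)=k(L,y)=0$ (so that $z(t,0)=z(t,L)=0$) and $k_x(L,y)=0$ (your ``clean'' condition), and adding a further condition over-determines the third-order kernel problem --- this counting argument is exactly how the paper proves $p\neq 0$, and indeed its inverse kernel has $Q_z(\cdot,L)\neq 0$ in general. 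The gap is repairable: the extra term is absorbed by Cauchy--Schwarz and Young exactly like the $d_1$-term, but then $\varrho_2=1+\tfrac{2}{\lambda}\Vert k_y(\cdot,L)\Vert_{L^2}^2$ rather than your claimed $\varrho_2=1$. Two smaller omissions: the theorem asserts $p$ is \emph{non-zero}, which you never verify (the paper does, via the over-determination argument above); and your cancellation condition actually defines $p$ implicitly, as the integral equation $(\Id-K)p=-k_y(\cdot,0)$, solvable thanks to the invertibility of $\Id-K$ --- together with the fact that the formal differentiation of $U$ along rough solutions should be justified by a density argument, as the paper does.
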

 \color{black}
 \medskip
 
\begin{proof} 
The proof of Theorem \ref{theo:ISS-Lyap-obs} is divided into two parts. The first step consists in proving the existence  of $p\in \L(0,L)$ such that the origin of  \eqref{eqereur*}, in the unperturbed case $d_1=0$, $d_2=0$, is exponentially stable. The second step is to show the existence of  a Lyapunov functional $U$  satisfying  the inequalities \eqref{eq:Usandwich} and \eqref{eq:Uder}.

Let us start the proof   of the first step. Inspired by \cite[equation (1.8)]{lucoron}, consider the change of coordinates
\begin{equation}
    \label{eq:change_of_coord_gamma}
\widetilde w \mapsto \gamma := \overline \Pi^{-1} \widetilde w
\end{equation}
where the function  $\overline \Pi$ is defined thanks to  
the following Fredholm integral transformation 
\begin{equation}
\label{eq:change_of_coord}
\widetilde{w}(x)=\overline\Pi(\gamma)(x):= \gamma(x)-\int^L_0 P(x,z)\gamma(z)dz \,, 
\end{equation}
for all $x\in [0,L]$, where $\widetilde{w}$ satisfies \eqref{eqereur*} with $d_1=0$ and $d_2=0$, $P$ is a function to be defined and $\gamma$ is the solution to the following system
\begin{equation}
\left\{
\begin{array}{lll}
\gamma_t+\gamma_{x} +\gamma_{xxx}+ \lambda\gamma=0\,,&\qquad  &(t,x)\in \RR_+\times [0,L],\\
\gamma(t,0)=\gamma(t,L)=\gamma_x(t,L)=0\,,  &&{t\in\mathbb{R}_+}\,,\\
\gamma(0,x)=\gamma_0(x)\,, && x\in [0,L],
\end{array}
\right.
\label{eqgama}
\end{equation}  with $\lambda>0$.
Note that using an integration by parts and the boundary conditions of \eqref{eqgama},
one immediately obtains
\color{black}
$$
\frac{d}{dt}\int^L_0\vert \gamma(t,x)\vert^2dx\leq -2\lambda\int^L_0\vert \gamma(t,x)\vert^2dx
$$
from which it is straightforward to deduce the exponential stability in 
the $L^2$-norm of $\gamma$.
As a consequence, the main idea of the proof consists in 
selecting  the function $p$  such that \eqref{eq:change_of_coord} holds. To do so, we need to find the kernel $P$ such that $\widetilde{w}(t,x)=\overline\Pi(\gamma)(t,x)$ satisfies \eqref{eqereur*} when $d_1=0$ and $d_2=0$. Furthermore, we have also to ensure that the corresponding transformation is invertible and continuous.
To this end, we first 
formally differentiate 
with   respect to the time and with respect to the space 
the change of coordinates \eqref{eq:change_of_coord}.
We obtain the following identities
\begin{align}
\label{eq:change_of_coor_t}
\widetilde{w}_t(t,x) &=  \displaystyle
\gamma_t(t,x)  +  \int^L_0  P(x,z)\Big(\lambda\gamma(t,z)+\gamma_z(t,z)+\gamma_{zzz}(t,z)\Big)dz\,,
\\
\label{eq:change_of_coor_x}
\widetilde{w}_x(t,x)& =\gamma_x(t,x)- \int^L_0 P_x(x,z)\gamma(t,z)dz\,,
\\
\label{eq:change_of_coor_xxx}
\widetilde{w}_{xxx}(t,x)&=\gamma_{xxx}(t,x)-\int^L_0 P_{xxx}(x,z)\gamma(t,z)dz\,,
\end{align}
in which \eqref{eq:change_of_coor_t} has been obtained 
by using the $\gamma$-dynamics in \eqref{eqgama}.
\color{black}
After some integrations by parts, \eqref{eq:change_of_coor_t} 
gives
\begin{align}
\label{eq:wt}
    \widetilde{w}_t(t,x)=& \, \gamma_t(t,x) -P(x,0)\gamma(t,0)+P(x,L)\gamma(t,x)+P(x,L)\gamma_{xx}(t,L) -P(x,0)\gamma_{xx}(t,0)+P_z(x,0)\gamma_x(t,0)\notag\\
    &\displaystyle - \int^L_0 \Big( -\lambda P(x,z)+P_z(x,z)+P_{zzz}(x,z) \Big)\gamma(t,z)dz-P_z(x,L)\gamma_x(t,L)+P_{zz}(x,L)\gamma(t,L)
    \notag\\&-P_{zz}(x,0)\gamma(t,0)\,.
\end{align}
Then, by adding on both sides 
the  terms 
$\widetilde{w}_x$, 
$\widetilde{w}_{xxx}$ 
and 
$-p(x)\widetilde{w}_x(t,0)$
and using  \eqref{eqereur*}, \eqref{eqgama}
and the previous identities 
\eqref{eq:change_of_coor_x}, 
\eqref{eq:change_of_coor_xxx}, 
we further obtain\color{black}
$$
  \begin{array}{l}
    \widetilde{w}_t(t,x)+\widetilde{w}_{x}(t,x) +\widetilde{w}_{xxx}(t,x)- p(x)\widetilde{w}_x(t,0) = 
    \\ \displaystyle 
    \quad = \gamma_t(t,x)+\gamma_{x}(t,x) +\gamma_{xxx}(t,x)+ \lambda\gamma(t,x)
  -\int^L_0 \Big( -\lambda P+P_z+P_{zzz}+P_{xxx}+P_x\Big)\gamma(t,z)dz
  \\
 \quad \phantom{=}\displaystyle -\lambda\gamma(t,x) +P(x,L)\gamma_{xx}(t,L)+P_z(x,0)\gamma_{x}(t,0)
-P(x,0)\gamma_{xx}(t,0)-p(x)\bigg[\gamma_x(t,0)-\int^L_0 P_x(0,z)\gamma(t,z)dz \bigg]
  \end{array}  
$$
where some arguments are omitted for compactness when clear from the context. 
Then, using the identity
$$
-\lambda\gamma(t,x) =  \int_0^L \lambda  \delta(x-z)\gamma(t,z) dz\,,
$$
where $\delta(x - z)$ 
denotes the Dirac measure on the diagonal of the square $[0, L] \times [0, L]$, 
the previous equation gives

\begin{equation}
\begin{array}{l}
     \widetilde{w}_t(t,x)+\widetilde{w}_{x}(t,x) +\widetilde{w}_{xxx}(t,x)- p(x)\widetilde{w}_x(t,0)
     \\ 
  \displaystyle \quad   = \gamma_t(t,x)+\gamma_{x}(t,x) +\gamma_{xxx}(t,x)+ \lambda\gamma(t,x)
 -\int^L_0\Big(-\lambda P+P_z 
+P_{zzz}+P_x + P_{xxx}  -\lambda \delta(x-z)\Big)\gamma(t,z) dz
\\ 
\quad \phantom{=} \displaystyle -P(x,0)\gamma_{xx}(t,0)+P(x,L)\gamma_{xx}(t,L)+
p(x)\!\!\int^L_0\!\! P_x(0,z)\gamma(t,z)dz
 -\gamma_{x}(t,0)\big[p(x)-P_z(x,0) \big].
 \end{array}\label{eq:conditions}
\end{equation}
From equation \eqref{eq:conditions}, we finally obtain the following conditions
for the functions $P$ and $p$.
\smallskip
\begin{enumerate}[label=(\textit{\alph*})]
    \item The  identity  $-\lambda P+P_z+P_{zzz}+P_x+P_{xxx}=  \lambda \delta(x-z)$ is satisfied for all $(x,z)\in [0,L]\times [0,L]$.
    \item The boundary conditions  $P(x,0)=P(x,L)=P_x(0,z)=0$ are satisfied for all $(x,z)\in [0,L]\times [0,L]$.
    \item An appropriate choice of $p$ is given by
$p(x):=P_z(x,0)$, for all $x\in [0,L]$.
\end{enumerate}
Moreover, note also that the following.
\begin{enumerate}[label=(\textit{\alph*})]  \setcounter{enumi}{3}
    \item By setting $x=0$ and $x=L$ in \eqref{eq:change_of_coord},  we need: $P(0,z)=P(L,z)=0$ for all $z\in [0,L]$.
    \item By setting $x=L$ in
    \eqref{eq:change_of_coor_x},  we need: $P_x(L,z)=0$ for all $z\in [0,L]$.
\end{enumerate}
 Therefore, 
collecting the conditions $(a)$-$(e)$, 
we impose  the function $P$ to satisfy the following PDE
\begin{equation}
    \left\{
\begin{array}{l} 
-\lambda P+P_z+P_{zzz}+P_x
+P_{xxx}=\lambda \delta(x-z) \,,\\
P(x,0)=P(x,L)=0\,,\\
P(L,z)=P(0,z)=0 \,,  \\
P_x(L,z)=P_x(0,z)=0 \,,
\end{array}
\right.
\label{noyau}
\end{equation}
where $(x,z) \in [0,L]\times[0,L]$ and $\delta(x - z)$ denotes the Dirac measure on the diagonal of the square $[0, L] \times [0, L]$.
Now, in order to show the existence of a solution to 
\eqref{noyau}, let us
make the following change of variable:
$$
\begin{pmatrix}
z
\\
x
\end{pmatrix}
\mapsto
\begin{pmatrix}
\bar x 
\\
\bar z
\end{pmatrix}
:= 
\begin{pmatrix}
L- z
\\
L-x
\end{pmatrix},
$$ 
and define $G(\bar{x},\bar{z}) := -P(x,z)$. From 
\eqref{noyau} it is obtained
\begin{equation}
    \left\{
\begin{array}{l}
\lambda G+G_{\bar{z}}+G_{\bar{z}\bar{z}\bar{z}}+G_{\bar{x}}+G_{\bar{x}\bar{x}\bar{x}}=\lambda \delta(\bar{x}-\bar{z})\,,\\
G(\bar{x},0)=G(\bar{x},L)=0\,,\\
G(L,\bar{z})=G(0,\bar{x})=0 \,,  \\
G_{\bar{z}}(\bar{x},0)=G_{\bar{z}}(\bar{x},L)=0 \,,
\end{array}
\right.
\label{noyau1}
\end{equation}
with $(\bar x,\bar z)$ belonging to  $[0,L]\times[0,L]$.
 Note that in  \cite[Lemma 2.1]{lucoron}, it has been proved that, for any $L  \notin \mathcal{N}$, the system \eqref{noyau1} admits a unique solution $G\in  H_0^1((0, L) \times (0, L))$.
 Therefore, we can conclude that the kernel $P$ exists. Then according to \cite[Lemma 3.1]{lucoron}, the transformation $\overbar\Pi$ is invertible and continuous on $ L^2(0,L)$ and its inverse is also
continuous. 
As a consequence, 
we have shown that, for an appropriate
choice of the function 
 $p\in \L(0,L)$,
the system 
\eqref{eqereur*} is transformed into 
the system 
\eqref{eqgama}
via a linear change of coordinates 
which is invertible with a continous inverse.
Since the origin of system \eqref{eqgama}
is exponentially stable, 
we conclude that so is the origin of 
\eqref{eqereur*} in the non-perturbed case 
(i.e., $d_1=0$, $d_2=0$).
Note also that $p$ is non-zero. Indeed, if $p=0$ then, in view of the condition (\textit{c}), we would have $P_z(x,0)=0$. Therefore, the system \eqref{noyau} would have seven boundary conditions. But then,  because of the degree of the first equation of \eqref{noyau}, the system \eqref{noyau} would have no solution.
This concludes the first part of the proof.

We want now to prove the existence of  a Lyapunov functional  which satisfies  the inequalities \eqref{eq:Usandwich} and \eqref{eq:Uder}
in presence of $d_1,d_2$. 
To this end, we choose the following candidate Lyapunov function $U:\L(0,L)\to \RR$ \begin{equation}
    \label{eq:Udef}
   U(w) := \Vert \overline\Pi^{-1}(w) \Vert^2_\L
\end{equation}
Since $\overline \Pi^{-1}$ exists, then $U$ is well defined in $L^2(0,L)$. Moreover, according to the continuity of $\overline\Pi^{-1}$ and  $\overline\Pi$  in $L^2(0,L)$,
there exist two positive constants $\underline c$ and $\bar c$ satisfying
inequality \eqref{eq:Usandwich} for all $w\in \L(0,L)$.
\color{black}
Note that the function 
$w\in  L^2(0,L) \mapsto U(w)\in \mathbb{R}_+$ is equivalent to the standard norm on the space 
$ L^2(0,L)$ according to \eqref{eq:Usandwich}. 
It only remains to prove that $U$ satisfies the inequality 
\eqref{eq:Uder}. To this end, we show inequality 
\eqref{eq:Uder} for $\widetilde w_0\in H^3_L(0,L)$, $d_2\in C^2([0,T])$ and $d_1\in C^1([0,T],L^2(0,L))$. The result follows for all $\widetilde w_0\in L^2(0,L)$,  $d_1\in L^1([0,T];L^2(0,L))$
and  $d_2\in L^2(0,T)$, by a standard density argument similar to the one used in \cite[Lemma 1]{marx2017cone}. Now, consider again the transformation defined in 
\eqref{eq:change_of_coord_gamma}, \eqref{eq:change_of_coord}. 
Similar computations can be used to show that its inverse transformation 
is defined by
\begin{equation}
    \label{inverse}
\gamma(x):=\overline\Pi^{-1}(\widetilde{w})(x)= \widetilde{w}+\int^L_0 Q(x,z)\widetilde{w}(z)dz \,, 
\end{equation} where $Q\in H_0^1((0, L) \times (0, L))$ is now the 
solution of the following system  
\begin{equation}
    \left\{
\begin{array}{l} 
\lambda Q+Q_z+Q_{zzz}+Q_x
+Q_{xxx}=\lambda \delta(x-z) \,,\\
Q(x,0)=Q(x,L)=0\,,\\
Q(L,z)=Q(0,z)=0 \,,  \\
Q_x(L,z)=0 \,,
\end{array}
\right.
\label{noyau2} 
\end{equation} 
and satisfies
$p(x)+\int^L_0p(z) Q(x,z)dz=Q_z(x,0)$ for all $x\in [0,L]$.
 Now, consider the solution $\widetilde{w}$ of system \eqref{eqereur*}
with $d_1,d_2$ possibly different from zero. Then, applying 
the change of coordinates $\gamma= \overline\Pi^{-1}(\widetilde{w})$
defined in 
 \eqref{inverse}, \eqref{noyau2}, we obtain
\begin{equation}
\left\{
\begin{array}{lll}
\gamma_t+\gamma_{x} +\gamma_{xxx}+ \lambda\gamma=\overline \Pi^{-1}(d_1)+ Q_z(x,L)d_2,\quad & &(t,x)\in \RR_+\times [0,L],\\
\gamma(t,0)=\gamma(t,L)= 0 \,,
& \qquad &{t\in\mathbb{R}_+},\\
\gamma_x(t,L)=d_2(t), & \qquad &{t\in\mathbb{R}_+},\\
\gamma(0,x)=\gamma_0(x) \,, && x\in [0,L] \,.
\end{array}
\right.
\label{eqgama_d}
\end{equation} 
The derivative of $U$ along the trajectory of \eqref{eqereur*}, 
or equivalently on 
the trajectory of \eqref{eqgama_d},
yields
\begin{align}\label{eq:tempUder}\notag
    \dot U(w)  = &-2 \displaystyle \int_0^L \gamma \Big(\gamma_{xxx} + \gamma_x +\lambda\gamma-\overline \Pi^{-1}(d_1)-Q_z(x,L)d_2\Big)dx  \\ \notag\displaystyle
 =&  -2 \lambda\int_0^L \vert \gamma\vert^2dx +2\int_0^L\gamma_x\gamma_{xx}dx +2\int_0^L \overline \Pi^{-1}(d_1)\gamma dx +2d_2\int_0^L Q_z(x,L)\gamma dx
\\
\leq &
\displaystyle -2\lambda\Vert \gamma\Vert_\L^2
+2\left|\int_0^L \overline \Pi^{-1}(d_1)\gamma dx\right| +d_2^2 -\gamma_x(0)^2 + 2\left|d_2\int_0^L Q_z(x,L)\gamma dx\right|\,,
\end{align}
where, in the second equation, we have used an integration by parts to compute 
$$
2\int_0^L\gamma_x \gamma_{xx}dx 
 =  \Big[ \gamma_x^2 (x)
 \Big]_0^L 
  = d_2^2 -\gamma_x(0)^2 \,.
$$
 Using first
 Cauchy-Schwarz's inequality and then Young's inequality
 $2ab \leq \nu a^2 +\tfrac{1}\nu b^2$, for any $\nu>0$, 
 from \eqref{eq:tempUder}
 we finally obtain
\begin{align*}
\dot U(w) \leq &-2\lambda\Vert \gamma\Vert_\L^2 + 2\Vert \gamma\Vert_\L\Vert \overline\Pi^{-1}(d_1)\Vert_\L+ 2\vert d_2\vert \Vert \gamma\Vert_\L\Vert Q_z(\cdot,L)\Vert_\L
+ |d_2|^2 
\\
 \leq & -\lambda\Vert \gamma\Vert_\L^2 +  \tfrac2\lambda\Vert \overline \Pi^{-1}(d_1)\Vert_\L^2
+ \left(1+\frac{2}{\lambda}\Vert Q_z(\cdot,L)\Vert_\L^2\right)|d_2|^2  \,.
\end{align*} 
Using the inequality \eqref{eq:Usandwich} on the term depending on $d_1$, we finally obtain 
\begin{equation}
    \label{eq:tempUder2}
\dot U(w)  \leq -\lambda\Vert \gamma\Vert_\L^2 +  \frac{2\bar c}{\lambda}\Vert d_1\Vert_\L^2
+ \left(1+\frac{2}{\lambda}\Vert Q_z(\cdot,L)\Vert_\L^2\right)|d_2|^2  
\end{equation}
showing the  inequality \eqref{eq:Uder} with 
$\varrho_1=\frac{2\bar c}{\lambda}, \varrho_2=1+\frac{2}{\lambda}\Vert Q_z(\cdot,L)\Vert_\L^2$.
This completes the proof.
\end{proof}

From the existence of the ISS Lyapunov functional 
established in Theorem~\ref{theo:ISS-Lyap-obs}, one can immediately 
deduce the following property for the observer
\eqref{eqob}. 

\medskip

\begin{corollary}\label{corollary:iss-observer}
For any $\lambda>0$,  there exists a function $p\in L^2(0,L)$  such that the observer 
\eqref{eqob} is an ISS
exponential convergent observer
for system \eqref{eqo}
with convergence rate $\lambda$,  namely, 
there exist some $c_0,c_1,c_2>0$ 
such that 
the following inequality holds
\begin{equation}
    \label{eq:ineq_observer_iss}
   \Vert \widehat{w}(t,\cdot)-{w}(t,\cdot)\Vert_\L
   \leq c_0 e^{- \lambda t}
    \Vert \widehat w_0-w_0\Vert_\L+
  c_1\int_{0}^t e^{-\lambda(t-s)}\|d_1(s,\cdot)\|_{L^2}ds  + c_2\int_{0}^t e^{-\lambda(t-s)}|d_2(s)|ds , 
\end{equation}
   for any initial conditions $w_0,\widehat w_0\in  L^2(0,L)$, 
 any  $d_1\in L^2([0,t];L^2(0,L))$, any $d_2\in L^2(0,t)$
and for all $t\geq 0$.
\end{corollary}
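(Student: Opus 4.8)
The plan is to recognize that Corollary~\ref{corollary:iss-observer} is nothing but the trajectory-level (integral) restatement of the infinitesimal dissipation estimate already proved in Theorem~\ref{theo:ISS-Lyap-obs}, transported back from the error coordinate $\widetilde w$ to $\widehat w - w$. Since the observer error obeys $\widehat w - w = -\widetilde w$ and $\widehat w_0 - w_0 = -\widetilde w_0$, one has $\Vert \widehat w(t,\cdot) - w(t,\cdot)\Vert_\L = \Vert \widetilde w(t,\cdot)\Vert_\L$, so it suffices to establish \eqref{eq:ineq_observer_iss} for the solution $\widetilde w$ of the error system \eqref{eqereur*}, which is exactly the ISS estimate \eqref{def-iss} of Definition~\ref{df} applied to \eqref{eqereur*}.

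With this reduction, I would proceed as follows. First, fix the output injection gain $p$ and the functional $U$ produced by Theorem~\ref{theo:ISS-Lyap-obs} for the prescribed rate. Along the trajectories of \eqref{eqereur*} the dissipation inequality \eqref{eq:Uder} holds, and an application of the comparison lemma (Gr\"onwall) to $t \mapsto U(\widetilde w(t,\cdot))$ yields
$$U(\widetilde w(t,\cdot)) \le e^{-\lambda t} U(\widetilde w_0) + \int_0^t e^{-\lambda(t-s)}\big(\varrho_1 \Vert d_1(s,\cdot)\Vert_\L^2 + \varrho_2 |d_2(s)|^2\big)\,ds.$$
Combining this with the norm equivalence \eqref{eq:Usandwich} (using the upper bound $U(\widetilde w_0)\le \bar c\Vert \widetilde w_0\Vert_\L^2$ at $t=0$ and the lower bound $\underline c\Vert \widetilde w(t,\cdot)\Vert_\L^2\le U$ at time $t$) gives a quadratic estimate for $\Vert \widetilde w(t,\cdot)\Vert_\L^2$. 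The final, linear convolution form \eqref{eq:ineq_observer_iss} then follows from the general Lyapunov-to-ISS implication recalled after Definition~\ref{df}, namely \cite[Theorem 3]{prieur}, which converts the ISS-Lyapunov pair \eqref{eq:Usandwich}--\eqref{eq:Uder} into the fading-memory estimate \eqref{def-iss}; the constants $c_0, c_1, c_2$ are then explicit functions of $\underline c, \bar c, \varrho_1, \varrho_2$.

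I expect the only delicate point to be the passage from the quadratic dissipation to the claimed linear convolution bound. A naive square root of the Gr\"onwall inequality above, together with subadditivity of $\sqrt{\cdot}$, only produces the weaker fading-memory form, in which the disturbance terms appear as $\big(\int_0^t e^{-\lambda(t-s)}\Vert d_1(s,\cdot)\Vert_\L^2\,ds\big)^{1/2}$ and the state decays like $e^{-\lambda t/2}$ rather than $e^{-\lambda t}$. Recovering the sharper statement \eqref{eq:ineq_observer_iss} is precisely the content of the cited conversion theorem (and, if one insists on the rate $\lambda$, amounts to applying Theorem~\ref{theo:ISS-Lyap-obs} with a suitably rescaled decay rate before invoking it). Well-posedness and the reduction to regular data needed to differentiate $U$ along trajectories are already handled inside the proof of Theorem~\ref{theo:ISS-Lyap-obs} by the density argument there, so no additional regularity work is required here.
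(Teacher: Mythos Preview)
Your approach is essentially the same as the paper's: the paper's proof is the single sentence ``The proof can be directly inherited from Theorem~\ref{theo:ISS-Lyap-obs} by applying Gr\"onwall's lemma to inequality \eqref{eq:Uder}.'' You carry out exactly that plan, with the added care of spelling out the reduction $\Vert \widehat w - w\Vert_\L = \Vert \widetilde w\Vert_\L$ and, more importantly, flagging the genuine gap between the quadratic Gr\"onwall output (rate $\lambda/2$, $L^2$-in-time disturbance norms) and the linear convolution form \eqref{eq:ineq_observer_iss} claimed with rate $\lambda$. Your proposed remedies---invoking the Lyapunov-to-ISS conversion \cite[Theorem~3]{prieur} already cited after Definition~\ref{df}, or re-running Theorem~\ref{theo:ISS-Lyap-obs} at rate $2\lambda$ before taking square roots---are exactly the right way to close that gap; the paper's one-line proof leaves this implicit.
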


\begin{proof}
The proof can be directly inherited from 
Theorem~\ref{theo:ISS-Lyap-obs}
by applying  Gr\"onwall's lemma to inequality \eqref{eq:Uder}.
\end{proof}

As a conclusion of this section, 
we remark that the in view of the exponential stability properties of the observer 
\eqref{eqob}, one can also design 
a local observer for 
the nonlinear KdV model 
\eqref{eq_kdv_nonlin}.
In particular, 
selecting the gain $p$
as in Corollary~\ref{corollary:iss-observer} 
it is possible to show that the
following system
$$
    \left\{
\begin{array}{lll}
\widehat{w}_t+\widehat{w}_{x} +\widehat{w}_{xxx}+
\widehat{w}\widehat{w}_{x}+ p(x)[y(t)
-\widehat{w}_x(t,0)]=0 \,,  &\qquad& (t,x)\in \RR_+\times [0,L] \,, \\
\widehat{w}(t,0)=\widehat{w}(t,L)=0 \,, && t \in \RR_+ \,,\\
\widehat{w}_{x}(t,L)=0 \,, && t \in \RR_+ \,,\\
\widehat{w}(0,x)= \widehat w_0(x) \,, && x \in [0,L] \,,
\end{array}
\right.
\label{eqob_nonlin}
$$
is a locally exponentially ISS observer
for system \eqref{eq_kdv_nonlin}, 
namely inequality
\eqref{eq:ineq_observer_iss}
holds for all
 $w_0, \hat w_0, d_1, d_2$ satisfying
  $$
       \Vert \hat w_0 \Vert_{\L} + \Vert w_0 \Vert_{\L} + 
\lim_{t\to\infty}  \int_{0}^t e^{-\mu(t-s)}\Big(\|d_1(s,\cdot)\|_{L^2} + 
|d_2(s)| \Big)ds 
\,\leq \, \delta
  $$
  for some $\delta$ small enough. 
  The proof is omitted for space reasons
  and can be derived by combining 
  the arguments of the proof 
  of Theorem~\ref{theo:ISS-Lyap-obs}
  with the robustness result established in 
  Corollary~\ref{corollary:ISS-nonlineaire}.
\color{black}

\section{Proof of Theorem~\ref{theo}}
\label{sec:proof} 
Let $T>0$. We prove the statement of the Theorem \ref{theo} for $w_0\in H^3_L(0,L)$, $d_2\in C_0^2([0,T])$ and $d_1\in C^1([0,T],L^2(0,L))$, where we recall that $H^3_L(0,L)$ is defined in \eqref{def-H3L}. \color{black}
Since $H^3_L(0,L)$, $C^2([0,T])$
and $C^1([0,T],L^2(0,L))$
are dense in $L^2(0,L)$,
$L^2(0,T)$  and $L^1([0,T];L^2(0,L))$,
respectively, the result follows for all $w_0\in L^2(0,L)$,  $d_1\in L^1([0,T];L^2(0,L))$
and  $d_2\in L^2(0,T)$, by a standard density argument similar to the one provided in \cite[Lemma 1]{marx2017cone}.
\color{black}

\subsection*{Proof of item (a) of  Theorem~\ref{theo}}
\label{preuveli}
The derivative of the Energy 
\eqref{eq:energy}
gives along solutions 
of the linear KdV model  \eqref{eq_kdv_lin}
a negative term in $w_x(t,0)$.
Moreover, 
Theorem~\ref{theo:ISS-Lyap-obs} shows that 
using such a term 
in the  $w$-dynamics, we are able to 
obtain an ISS-Lyapunov functional $U$.
As a consequence, the main idea of this proof
consists in adding and subtracting the term 
$w_x(t,0)$, multiplied by a coefficient $p(x)$,
in the $w$ dynamics: one term is used to obtain 
the negativity in the $L^2$ norm of the full space 
as in \eqref{eq:Uder}, while the other is treated
as a distributed disturbance $d_1$
and compensated by the negativity of the Energy. 

With the previous points in mind, fix $\lambda=1$ and consider the functions $p$  and $U$ given by Theorem~\ref{theo:ISS-Lyap-obs}. 
Set $\bar p := \Vert p \Vert_\L^2$.
Note that   $\bar p\neq0$ because $p$ is a non-zero function.
We define the
operator $\Pi$ and the function $W$ as follows 
\begin{equation}
    \label{eq:Wdef}
    W(w) := \dfrac{1}{2\bar p \varrho_1} U(w) = \Vert \Pi(w)\Vert_{\L}^2\,, 
    \qquad
    \Pi(w) :=  \dfrac{1}{\sqrt{2\bar p \varrho_1}} \overline \Pi^{-1}(w)\,,
\end{equation}
for all $w\in \L(0,L)$,
where the operator $\overline\Pi$ and the parameter $\varrho_1$ are given  
by Theorem~\ref{theo:ISS-Lyap-obs}.
We show that the statement of the theorem holds
and in particular that the inequalities 
 \eqref{eq:Lyap-sandwich}, 
\eqref{eq:Lyap-ISS} 
are satisfied.
First, in view of 
\eqref{eq:Usandwich}, we obtain 
$$
\dfrac{\underline c}{2\bar p  \varrho_1} \Vert w\Vert_{\L} \leq W(w) \leq 
\dfrac{\bar c}{2\bar p \varrho_1} \Vert w\Vert_{\L}
$$
As a consequence, by recalling 
that $E(w) = \Vert w\Vert^2_{\L}$, 
the inequality  \eqref{eq:Lyap-sandwich}
is satisfied for the function 
$V= E+ W$ with 
$\underline\alpha := 1+\frac{\underline c}{2\bar p\varrho_1}$
and 
$\bar \alpha := 1+\frac{\bar c}{2\bar p\varrho_1}$.

Then, in order to show 
the inequality \eqref{eq:Lyap-ISS} 
we compute the derivative of the functional 
$V$ along the trajectories of the system 
\eqref{eq_kdv_lin}.
We first analyze the time derivative
of the energy $E$. Using \eqref{eq:energy-weak}
and adding the effect of the perturbations $d_1,d_2$, we obtain
\begin{align}
  \notag
    \dot E (w) & = - |w_x(0)|^2
    + 2\int^L_0 w(x) d_1(x) dx + |d_2|^2 
    \\
    &\leq 
     - |w_x(0)|^2 + 
      \dfrac{\underline c}{4\bar p  \varrho_1} 
      \Vert w\Vert_\L^2   
     + \dfrac{4\bar p \varrho_1}{\underline c}\Vert d_1\Vert_\L^2   
+ |d_2|^2    \label{eq:Eder}
\end{align}
where the second inequality has been obtaining
by using the Cauchy-Schwarz and Young inequalities, 
and with the parameters $\underline c,\varrho_1$ given by
Theorem \ref{theo:ISS-Lyap-obs}.
Next, we compute the derivative of $W$
along the trajectories of system \eqref{eq_kdv_lin}.
To this end, we first add and subtract the term $p(x)w_x(t,0)$
to the dynamics, obtaining
\begin{equation}
   \left\{
\begin{array}{lll}
w_t+w_{x} +w_{xxx}- p(x)w_x(t,0)=
-p(x)w_x(t,0)+d_1(t,x)\,,  &\qquad & (t,x)\in \RR_+\times [0,L]\,,\\
w(t,0)=w(t,L)=0\,,  && t\in \RR_+ \,,\\
w_{x}(t,L)=d_2(t)\,, && t\in \RR_+\,,\\
w(0,x)=w_0(x) \,,&& x\in [0,L]\,.
\end{array}
\right.
\label{lin}
\end{equation}
Applying the ISS-Lyapunov inequality \eqref{eq:Uder} along solutions to \eqref{lin}, the derivative of $U$ yields
\begin{align}
    \dot U(w) \leq & - U(w) + \varrho_1 \Vert d_1 -pw_x(0)\Vert_{\L}^2 +  \varrho_2|d_2|^2
    \notag \\
     \leq &- \underline c \Vert w\Vert_\L^2 +2 \varrho_1 \Vert d_1 \Vert_{\L}^2 + 2 \varrho_1 \Vert p\, w_x(0)\Vert_{\L}^2 + \varrho_2 |d_2|^2 \,,
     \label{eq:Uder2}
\end{align}
where in the second inequality we used again the
inequality
\eqref{eq:Usandwich}.
Finally, we can compute the derivative 
of the function $V= E+ W$, with
$W$ defined in \eqref{eq:Wdef},
by combining 
\eqref{eq:Uder2} and  \eqref{eq:Eder}
and using the identity 
\begin{equation}
\label{eq:identity}
- |w_x(0)|^2
+\tfrac{1}{\bar p}\Vert p\, w_x(0)\Vert_{\L}^2 = 0 \,.
\end{equation}
Simple computations (omitted for space 
reason) give the inequality \eqref{eq:Lyap-ISS}
with the choice 
$\alpha  :=  
\frac{\underline c}{4 \bar p  \varrho_1}$, 
$\sigma_1 =\frac{4\bar p  \varrho_1}{\underline c} +\frac{1}{\bar p}$, 
$\sigma_2:=1+\frac{ \varrho_2}{2\bar p  \varrho_1}$. 
This concludes the proof of item (a) of Theorem~\ref{theo}.

\subsection*{Proof of the item (b) of Theorem 2.5}
Consider again the function $V= E+W$
with defined in 
\eqref{eq:Wdef}. 
The derivative of the energy 
\eqref{eq:energy}
along the trajectories of the 
nonlinear system \eqref{eq_kdv_nonlin} 
is computed
as in \eqref{eq:Eder} because the contribution of the nonlinear $ww_x$ is zero.
Next, we compute the time derivative of $W$.
However, due to the presence of the nonlinear term 
$ww_x$ we cannot apply off-the-shelf
the inequality \eqref{eq:Uder}
by including such a term 
in the disturbance $d_1$: it would not be 
bounded with the right norm.
As a consequence, unfortunately, we 
need to revisit and adapt some steps
of the proof of Theorem~\ref{theo:ISS-Lyap-obs}
and in particular we need to compute
the change of coordinates defined in 
\eqref{eq:change_of_coord_gamma},
\eqref{inverse}.
Recalling that we selected $\lambda=1$, the $\gamma$-dynamics reads
\begin{equation}
    \left\{
\begin{array}{lll}
\gamma_t+\gamma_{x} +\gamma_{xxx}+ \gamma =
-\bar\Pi^{-1}(p)w_x(t,0)+\overbar\Pi^{-1}(d_1) -\overbar\Pi^{-1}(ww_x)+Q_z(x,L)d_2, && (t,x)\in \Omega\\
\gamma(t,0)=\gamma(t,L)=0\,, && t\in \RR_+\\
\gamma_x(t,L)=d_2(t) \,, && t\in \RR_+\\
\gamma(0,x)=\gamma_0(x) \,, && x\in [0,L]\,.
\end{array}
\right.
\label{eqgammaa}
\end{equation}
where $Q$ is defined in \eqref{inverse}. 
With respect to system \eqref{eqgama_d}
we have two extra terms to analyse, 
that are the terms $\overbar\Pi^{-1}(p)w_x(0)$ and $\overbar\Pi^{-1}(ww_x)$.
As a consequence, we consider again the Lyapunov functional $U(w):=\Vert \gamma\Vert^2_\L$ as in \eqref{eq:Udef}, 
and we follow similar computations to 
those developed from   \eqref{eq:tempUder}
to \eqref{eq:tempUder2}.
Also, as in the proof of item (a), 
we consider as a a full disturbance 
the term $d_1 -pw_x(0)$, see inequality 
\eqref{eq:Uder2}.
In particular, 
the derivative of $U$  along the trajectories of system \eqref{eqgammaa} satisfies, for all   $w\in \L(0,L)$
$$
\dot U(w)  \leq -\Vert \gamma\Vert_\L^2 + 
\varrho_1 \Vert d_1 -pw_x(0)\Vert_{\L}^2
+ \varrho_2|d_2|^2  
  +2\left|\int_0^L f(ww_x)\gamma dx\right|
$$
where the function $f$ is defined as 
$f(ww_x)(x):=\bar \Pi^{-1}(ww_x)(x)= w(x)w_x(x)+\int^L_0 Q(x,z)w(z)w_x(z)dz.$ By using the same argument as in \cite[Proof of Theorem 1.2, page 1111-1113]{lucoron}, we can show the existence of  positive constant
$\bar f$ that depends only on the function $Q$,
such that
$$
    2\left|\int_0^L f(ww_x)(x)\gamma(x) dx\right|\leq \bar f\Vert \gamma\Vert^3_\L
    \qquad\forall\,w\in \L(0,L).
$$
As a consequence, combining  the previous inequalities
and following the same computations in 
\eqref{eq:Uder2},
 we  obtain, for all $w\in \L(0,L)$
$$
   \dot U(w) \leq   -\left(1- \bar f \Vert\gamma\Vert_{\L} \right)   \Vert \gamma\Vert_\L^2  + 2 \varrho_1 \Vert d_1 \Vert_{\L}^2 + 2 \varrho_1 \Vert p\, w_x(0)\Vert_{\L}^2 + \varrho_2 |d_2|^2\,.
$$
Therefore, 
using the inequality \eqref{eq:Usandwich}, 
we obtain
$$
 \dot U(w) \leq
-\tfrac{\underline c}2  \Vert w\Vert_\L^2  +2 \varrho_1 \Vert d_1 \Vert_{\L}^2 + 2 \varrho_1 \Vert p\, w_x(0)\Vert_{\L}^2 + \varrho_2 |d_2|^2
$$
for all 
 $w$ satisfying $\Vert  w\Vert_{\L} \leq  \bar\delta$, 
 with $\bar\delta =  (2 \sqrt{\underline c} \bar f)^{-1} $.
 Using the definition of the function $W$
 in \eqref{eq:Wdef} and following the same 
 steps of item (a), 
 we obtain the inequality 
 inequality \eqref{eq:Lyap-ISS}
with the choice 
$\alpha  :=  
\frac{\underline c}{8 \bar p  \varrho_1}$, 
$\sigma_1 =\frac{4\bar p  \varrho_1}{\underline c} +\frac{1}{\bar p}$, 
$\sigma_2:=1+\frac{ \varrho_2}{2\bar p  \varrho_1}$, 
and $\delta = \tfrac13\bar \delta$.
 \color{black}

\bigskip

\color{black}
\section{Adding an integral action}
\label{Add}

In this section we consider the regulation problem of a KdV
equation in which the disturbance $d_2$ is considered as a 
control input acting at the boundary condition, 
and the output $y(t)= w_x(t,0)$ has to be regulated 
at a certain desired constant reference $r$
in presence of unknown distributed constant disturbances $d_1$.
We aim at showing that such a problem 
can be solved by means of an integral action and 
an output feedback control law. 
The proposed design 
 is based on the forwarding method (see e.g., \cite{terrand2019adding} or \cite{marx2021forwarding}). 
Note that in Section~\ref{sec_reg_linear}, we  focus on the linearized version of the KdV model \eqref{eq_kdv_lin}. Then, in Section~\ref{sec_reg_nonlinear}, we will show a local result for the nonlinear system
\eqref{eq_kdv_nonlin}. 


\subsection{Regulation of linear {KdV} equation by means of the forwarding method}
\label{sec_reg_linear}

Consider the following system 
\begin{equation}\label{sys_reg}
\left\{
\begin{array}{lll}
w_t+w_{x} +w_{xxx}=d(x)\,,  &\qquad & (t,x)\in \RR_+\times [0,L]\,,\\
w(t,0)= w(t,L) = 0 \,, & & t\in \RR_+ \\
w_{x}(t,L)=u(t)\,, & & t\in \RR_+\\
w(0,x)=w_0(x)\,, & & x\in [0,L]\\
y(t) = w_x(t,0) \,, & & t\in \RR_+
\end{array}
\right.
\end{equation}where $d\in \L(0,L)$ is a constant perturbation, $u\in \mathbb{R}$ is the control input, 
 and $y\in \mathbb{R}$ is  the output to be regulated 
at a certain desired constant reference 
$r$. We define the regulated output error $e= y-r$ and we
defined our regulation objective as
\begin{equation}
    \label{eq:regulation_objective}
    \lim_{t\to\infty} e(t) =  \lim_{t\to\infty}  y(t) - r = 0\,.
\end{equation}\color{black}
To this end, we follow the standard set-up of output regulation 
\cite{astolfi2017integral,terrand2019adding}
and we extend system 
\eqref{sys_reg} with an integral action 
processing the desired error to be regulated. In other words, 
we consider a dynamical feedback law of the form 
\begin{equation}
\label{control}
  \dot \eta =  y - r \,, \qquad   u=k \eta\,,
\end{equation}
where $\eta\in \RR$ is the state of the controller and $k$ is  a positive constant to be selected small enough, as shown later.  
The closed-loop system 
 \eqref{sys_reg}, \eqref{control} can be seen as an augmented system, i.e. a PDE system (whose state is $w$) coupled with an ODE (whose state is $\eta$), which reads
\begin{equation}\label{sys_reg_closed_loop}
\left\{
\begin{array}{lll}
w_t+w_{x} +w_{xxx}=d(x) \,,  &\qquad & (t,x)\in \RR_+\times [0,L]\,,\\
w(t,0)= w(t,L) = 0 \,,  &\qquad & t\in \RR_+\,,\\
w_{x}(t,L)=k\eta(t)\,,  &\qquad & t\in \RR_+\,,\\
\dot \eta(t) = w_x(t,0) -r \,,  &\qquad & t\in \RR_+\,,\\
w(0,x)=w_0(x),\; \eta(0)=\eta_0 \,,  &\qquad & x\in [0,L]\,.
\end{array}
\right.
\end{equation}
We define the space $X:=\mathbb{R}\times\L(0,L)$, that is the state space of \eqref{sys_reg_closed_loop}. It is a Hilbert space as the Cartesian product of two Hilbert spaces.
In the rest of the section, 
we will show the following properties for the closed-loop system 
\eqref{sys_reg_closed_loop}: it is well posed, it admits a unique equilibrium which is exponentially stable, and the regulation objective
\eqref{eq:regulation_objective} is achieved when considering 
sufficiently regular solutions.

To this end, we introduce now the following two linear operators
$\cS$ and $\cA$
that will be used in the rest of the section.
In particular, 
we denote with 
$\cS$  the operator  associated with the  linear KdV equation
\eqref{eq_kdv_lin}.
The operator $\cS$ and its domain $D(\cS)\subset \L(0,L)$ are defined as
\begin{equation}\label{operator_S}
    \cS w=-w' -w''', 
    \qquad D(\cS):= \lbrace w\in H^3(0,L):\: w(0)=w(L)=w'(L)=0\rbrace.
\end{equation}
Then, we define the operator $\cA$ in order to describe
the closed-loop system \eqref{sys_reg_closed_loop} in the 
following
abstract form  
\begin{equation}\label{operator_A}
\dfrac{d}{d t} \zeta = \cA \zeta+\Gamma\,,\quad
\zeta(0) =\tilde\zeta_0\,,
\quad
\zeta := \begin{pmatrix}
  \eta \\ w
\end{pmatrix}, \quad
\cA(\eta,w) := \begin{bmatrix}
w'(0) \\ -w'-w'''
\end{bmatrix},
\quad
\Gamma:=\begin{bmatrix}
-r \\ d
\end{bmatrix},
\end{equation}
with the domain of $\cA$  defined as 
$D(\cA):= \lbrace (\eta,w)\in \mathbb{R}\times H^3(0,L)\mid w(0)=w(L)=0, w'(L)=k \eta \rbrace \subset X$.
We start by proving the existence and uniqueness of an equilibrium 
for system \eqref{sys_reg_closed_loop}
in the following lemma.
\color{black}

\medskip

\begin{lemma}\label{lemma_equilibrium}
For any $k\neq0$ and $(d, r)\in \L(0,L)\times\RR$ there exist a  unique equilibrium state $(\eta_\infty,w_\infty)\in X$ to
system \eqref{sys_reg_closed_loop}. \\
\label{equi}
\end{lemma}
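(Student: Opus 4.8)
The plan is to write down the defining equations for an equilibrium and solve them explicitly. An equilibrium $(\eta_\infty, w_\infty)\in X$ of \eqref{sys_reg_closed_loop} is, by definition, a point where the right-hand sides vanish, i.e. $\frac{d}{dt}\eta = 0$ and $w_t = 0$. From the $\eta$-equation $\dot\eta = w_x(t,0) - r$, stationarity forces
\begin{equation}
\label{eq:equil_eta}
w_\infty'(0) = r \,,
\end{equation}
while from the $w$-equation, stationarity forces $w_\infty$ to solve the boundary-value problem
\begin{equation}
\label{eq:equil_w}
\left\{
\begin{array}{l}
w_\infty' + w_\infty''' = d(x)\,, \\
w_\infty(0) = w_\infty(L) = 0\,, \\
w_\infty'(L) = k\eta_\infty\,.
\end{array}
\right.
\end{equation}
The unknowns are the function $w_\infty$ and the scalar $\eta_\infty$, coupled through the boundary condition $w_\infty'(L) = k\eta_\infty$ together with the scalar constraint \eqref{eq:equil_w} is a third-order ODE with two boundary conditions at the endpoints plus the extra scalar condition \eqref{eq:equil_eta}. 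The strategy I would follow is to first show that \eqref{eq:equil_eta}--\eqref{eq:equil_w} determines $w_\infty$ uniquely, and then recover $\eta_\infty$ from $\eta_\infty = w_\infty'(L)/k$, which is well defined precisely because $k\neq 0$.

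\textbf{Solving the boundary value problem.}

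The key observation is that the scalar $\eta_\infty$ merely fixes the value of $w_\infty'(L)$, so it is natural to decouple the problem: treat $w_\infty'(L)$ as a free parameter, call it $\beta$, and consider the third-order linear ODE $w' + w''' = d$ with the four conditions $w(0)=w(L)=0$, $w'(0)=r$, $w'(L)=\beta$. Since a third-order ODE with a source admits a three-parameter family of solutions, imposing the three homogeneous-type conditions $w(0)=0$, $w(L)=0$, $w'(0)=r$ already pins down a unique candidate $w_\infty$ (assuming the associated homogeneous operator is invertible under these boundary conditions). Then $\beta$ is not free after all: it is determined as $\beta = w_\infty'(L)$, the computed value, and we set $\eta_\infty := \beta/k$. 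The main technical point is therefore to verify that the homogeneous problem $v'+v''' = 0$, $v(0)=v(L)=0$, $v'(0)=0$ has only the trivial solution $v\equiv 0$, which yields both existence and uniqueness of $w_\infty$ via the Fredholm alternative for this finite-dimensional-kernel boundary-value problem.

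\textbf{The main obstacle.}

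I expect the crux to be the injectivity argument for the homogeneous problem, i.e. showing that $v'+v''' = 0$ with the three boundary conditions $v(0)=v(L)=0$, $v'(0)=0$ forces $v\equiv 0$. The general solution of $v'''+v'=0$ is $v(x) = c_0 + c_1\cos x + c_2\sin x$; imposing $v(0)=0$, $v'(0)=0$, and $v(L)=0$ gives three linear equations in $(c_0,c_1,c_2)$ whose determinant one must check is nonzero. A direct computation gives $v(0)=c_0+c_1=0$, $v'(0)=c_2=0$, and $v(L)=c_0+c_1\cos L + c_2\sin L = 0$; substituting $c_2=0$ and $c_1=-c_0$ reduces the last equation to $c_0(1-\cos L)=0$, so the only potential degeneracy is $L\in 2\pi\mathbb{Z}$. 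I would either argue this case is excluded (note that $L\notin\mathcal{N}$ already rules out a discrete set, and $2\pi k$ for $k\geq 1$ lies in $\mathcal{N}$ taking $l=k$, giving $2\pi\sqrt{k^2} = 2\pi k$), or handle the degenerate exponents separately. Once injectivity is established, existence of $w_\infty$ for the inhomogeneous right-hand side $d\in L^2(0,L)$ and boundary data $r$ follows from standard linear ODE theory, and uniqueness of $\eta_\infty$ is immediate from $k\neq 0$; assembling these gives the unique equilibrium $(\eta_\infty, w_\infty)\in X$.
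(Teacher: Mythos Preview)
Your proposal is correct and follows the same overall strategy as the paper: both recognize that the equilibrium is determined by the third-order boundary value problem $w_\infty'+w_\infty'''=d$ with $w_\infty(0)=w_\infty(L)=0$ and $w_\infty'(0)=r$, and then recover $\eta_\infty=w_\infty'(L)/k$ (using $k\neq0$).

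The difference lies in how the boundary value problem is shown to be uniquely solvable. The paper lifts the inhomogeneous condition $w_\infty'(0)=r$ via $\phi(x)=\frac{rx(L-x)}{L}$, rewrites the resulting equation as $\cS^*\psi=j$ with $\cS^*\psi=\psi'''+\psi'$ on the domain $\{\psi(0)=\psi(L)=\psi'(0)=0\}$, and then argues abstractly: $\cS^*$ has compact resolvent, hence its spectrum consists of eigenvalues, and $0$ is asserted not to be one. You instead compute the kernel explicitly from the general solution $c_0+c_1\cos x+c_2\sin x$, reducing the question to $c_0(1-\cos L)=0$, and correctly observe that the degenerate case $L\in 2\pi\mathbb{Z}_+$ is contained in $\mathcal{N}$ (take $k=l$). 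Your route is more elementary and, in fact, supplies the justification the paper leaves implicit for the claim that $0\notin\sigma(\cS^*)$; the paper's operator-theoretic formulation, on the other hand, makes the $H^3$-regularity of $w_\infty$ for $d\in L^2(0,L)$ more transparent.
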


\begin{proof}
Consider the following boundary value problem 
\begin{equation*}
\left\{
\begin{array}{lll}
w_\infty'(x) +w_\infty'''(x)=d(x)\,, &\qquad & x\in [0,L]\,, \\
w_\infty(0)= w_\infty(L) = 0\,, \\
w_\infty'(0)= r\,,
\end{array}
\right.
\end{equation*}
which represents the nonzero equilibrium state of \eqref{sys_reg_closed_loop}, 
together with $\eta_\infty = \frac{w_\infty'(L)}{k}$.
Consider the smooth function $\phi(x)= \frac{rx(L-x)}{L}$. It  satisfies the boundary conditions $\phi(0)=\phi(L)=0$ and 
$\phi'(0)=r$. We set $\psi=w_\infty-\phi$. Then $\psi$ satisfies the following system
\begin{equation*}
\left\{
\begin{array}{lll}
\psi'(x) +\psi'''(x)=j(x) \,, &\qquad & x\in [0,L]\,, \\
\psi(0)= \psi(L) = 0 \,, &\\
\psi'(0)= 0 \,,
\end{array}
\right.
\end{equation*} 
where $j(x)=d(x)-\phi'(x)$. 
This system can be written in the operator form 
as $\cS^*\psi=j$, 
where $\cS^*$, 
is the adjoint operator of $\cS$ defined 
in \eqref{operator_S}. In particular, $\cS^*$, 
is defined as 
\color{black}$\cS^*\psi=\psi'''+\psi'$  with domain $D(\cS^*):= \lbrace w\in H^3(0,L):\: w(0) = w(L)=w'(0)=0\rbrace$. Following \cite[Lemma 4]{marx2015stabilization}, we can prove that the canonical embedding from $D(\cS^*)$, equipped with the graph norm, into $L^2(0,L)$, is compact. Then, according to \cite[Proposition 4.24]{cheverry2019handbook}, $\cS^*$ is an operator with compact resolvent. 
This implies that its spectrum
consists only of eigenvalues.
Moreover, $0$ is not an eigenvalue of $\cS^*$. Hence, there exists a unique solution $\psi_\infty$ to the equation $\cS^*\psi=j$. The equilibrium 
$(\eta_\infty,w_\infty)$ can then be computed as
$ w_\infty(x)=\psi_\infty + \phi(x)$ for all $x\in [0,L]$
and $\eta_\infty = \frac{w_\infty'(L)}{k}$, with 
$\phi$ being the function defined at the beginning
of the proof.  
 \end{proof}
 
Next, we show the following well-posedness result for the
closed-loop system \eqref{sys_reg_closed_loop}.
In the proof, we will also introduce 
a strict Lyapunov functional for the closed-loop system 
\eqref{sys_reg_closed_loop}. Such a Lyapunov functional
is obtained via the forwarding methodology 
similarly 
to \cite{terrand2019adding,marx2021forwarding}
and it is based on the ISS-Lyapunov 
established in Theorem~\ref{theo}.

\medskip

\begin{lemma}\label{theo:well-posedness}
Let $L\notin \mathcal{N}$. There exist $k_0^\star>0$ such that for any $k\in (0,k_0^\star)$, for any $(d, r)\in \L(0,L)\times\RR$ and for any initial condition $(\eta_0,w_0)\in X$ (resp. $D(\cA)$), there exists a unique weak solution $(\eta,w)\in C^0(\RR_+;X)$ (resp. strong solution in $C^1(\RR_+;X)\cap C^0(\RR_+;D(\cA))$) to system \eqref{sys_reg_closed_loop}.
\end{lemma}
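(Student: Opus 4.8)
The plan is to recast the closed-loop system as the abstract Cauchy problem $\frac{d}{dt}\zeta=\cA\zeta+\Gamma$ of \eqref{operator_A} and to prove that $\cA$ generates a $C_0$-semigroup by the Lumer--Phillips theorem, the constant forcing $\Gamma$ then being absorbed through the variation-of-constants formula. A first reduction uses Lemma~\ref{lemma_equilibrium}: writing $\bar\zeta=\zeta-\zeta_\infty=(\bar\eta,\bar w)$ with $(\eta_\infty,w_\infty)$ the unique equilibrium, linearity gives $\frac{d}{dt}\bar\zeta=\cA\bar\zeta$, so it suffices to treat the homogeneous problem, in which $\bar w$ solves \eqref{eq_kdv_lin} with $d_1=0$ and boundary datum $\bar w_x(t,L)=k\bar\eta$, while $\dot{\bar\eta}=\bar w_x(t,0)$. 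The obstruction is that, for the natural inner product of $X$, $\cA$ is not dissipative: one computes $\langle\cA\zeta,\zeta\rangle=\eta\,w'(0)+\tfrac12 k^2\eta^2-\tfrac12|w'(0)|^2$, and the cross term $\eta\,w'(0)$ together with the positive boundary term $\tfrac12 k^2\eta^2$ are not dominated by the lone negative contribution. The remedy is to equip $X$ with an equivalent inner product coming from a forwarding Lyapunov functional built on the ISS functional $V$ of Theorem~\ref{theo}.

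To construct it I would first solve the Sylvester equation $M\cS=\cC$, where $\cC w:=w'(0)$ and $\cS$ is the operator \eqref{operator_S}. Representing $M$ by $M(w)=\int_0^L m\,w\,dx$, integration by parts shows that $m$ must solve the third-order boundary value problem $m'''+m'=0$, $m(0)=m(L)=0$, $m'(0)=-1$; since $L\notin\mathcal{N}$ in particular rules out $L\in 2\pi\NN$, one has $\cos L\neq1$ and this problem has a unique smooth (hence $L^2$-bounded) solution, for which a direct computation gives $m'(L)=1$. The virtue of this choice is that, for $(\bar\eta,\bar w)\in D(\cA)$ in the homogeneous system, the interior and $w'(0)$ contributions cancel and only the boundary control survives, leaving the clean identity $\frac{d}{dt}(\bar\eta-M\bar w)=-k\,m'(L)\,\bar\eta=-k\bar\eta$. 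I then set
$$\mathcal{V}(\bar\eta,\bar w):=V(\bar w)+\rho\,(\bar\eta-M\bar w)^2,$$
which for $\rho<\underline\alpha/\|M\|^2$ is coercive and equivalent to $\|\cdot\|_X^2$. Differentiating along the flow and invoking \eqref{eq:Lyap-ISS} with $d_1=0$, $d_2=k\bar\eta$ yields $\dot V(\bar w)\le-\alpha\|\bar w\|_\L^2+\sigma_2 k^2\bar\eta^2$, so that, after Young's inequality on the cross term $2\rho k(M\bar w)\bar\eta$,
$$\dot{\mathcal{V}}\le-(\alpha-\rho k\|M\|^2)\|\bar w\|_\L^2-k(\rho-\sigma_2 k)\bar\eta^2.$$
Fixing $\rho$ and then taking $k\in(0,k_0^\star)$ small enough makes both coefficients positive, whence $\dot{\mathcal{V}}\le-c'\mathcal{V}$ and $\cA$ is strictly dissipative for the inner product induced by $\mathcal{V}$.

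It remains to verify the range condition: for some $\lambda>0$, $\lambda I-\cA$ maps $D(\cA)$ onto $X$. Given $(a,f)\in X$ this amounts to solving $\lambda\eta-w'(0)=a$ and $\lambda w+w'+w'''=f$ under $w(0)=w(L)=0$, $w'(L)=k\eta$; eliminating $\eta$ leaves a third-order problem for $w$ with a nonlocal boundary condition coupling $w'(L)$ and $w'(0)$. Since the embedding $D(\cA)\hookrightarrow X$ is compact (Rellich for $H^3\hookrightarrow L^2$ together with the finite-dimensional $\eta$-component) and $\lambda I-\cA$ is injective for $\lambda>0$ by the dissipativity just established, the Fredholm alternative gives surjectivity, exactly as for $\cS^*$ in Lemma~\ref{lemma_equilibrium}. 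By Lumer--Phillips, $\cA$ generates a $C_0$-semigroup, exponentially stable thanks to the strict estimate; standard semigroup theory then provides strong solutions in $C^1(\RR_+;X)\cap C^0(\RR_+;D(\cA))$ for data in $D(\cA)$ and, via $\zeta(t)=\zeta_\infty+e^{t\cA}(\zeta_0-\zeta_\infty)$, weak solutions in $C^0(\RR_+;X)$ for data in $X$, uniqueness being inherited from the generator property.

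The main obstacle is the forwarding construction of the second paragraph: one must deal with the unbounded boundary input and output, solve the Sylvester equation explicitly enough to read off the crucial sign $m'(L)=1>0$ — the non-degeneracy condition that makes the integral action admissible — and then balance the two competing $\bar\eta^2$ terms so that the smallness of $k$ closes the estimate. By contrast, the surjectivity step reduces to the compact-resolvent/Fredholm argument already used for the equilibrium, and the passage from dissipativity to the semigroup and to the two solution classes is routine.
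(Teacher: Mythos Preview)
Your dissipativity argument is essentially the paper's: both shift to the equilibrium, solve the same Sylvester boundary-value problem for $M$ (the paper writes the explicit formula $M(x)=-2\sin(x/2)\sin((L-x)/2)/\sin(L/2)$ and uses $M'(L)=1$ only implicitly when deriving \eqref{ma}, which is exactly the identity you isolate), build the forwarding functional $V(w)+\rho(\eta-\cM w)^2$, and close with the ISS estimate \eqref{eq:Lyap-ISS} plus Young's inequality. The paper fixes $\rho=1$; your tunable $\rho$ is a harmless variant.

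The gap is in the range-condition step. Compact embedding $D(\cA)\hookrightarrow X$ together with injectivity of $\lambda I-\cA$ does \emph{not} by itself yield surjectivity via ``the Fredholm alternative'': you would first need $\lambda I-\cA$ to be Fredholm of index zero, which for an unbounded operator amounts to knowing that the resolvent set is nonempty --- precisely what you are trying to establish. The analogy with $\cS^*$ in Lemma~\ref{lemma_equilibrium} is not quite apt, because there the nonemptiness of the resolvent is already available ($\cS$ is a known semigroup generator, hence so is $\cS^*$), and only then does compactness of the domain embedding give compact resolvent and the spectral conclusion. The paper closes this gap by a direct construction: it eliminates $\tilde\eta$, subtracts a smooth lifting $\tilde\phi$ of the inhomogeneous boundary datum, and reduces the resolvent equation to $(\lambda_0 I-\widehat\cS)\tilde\psi=\tilde j$ for an auxiliary operator $\widehat\cS$ with domain $\{\psi\in H^3(0,L):\psi(0)=\psi(L)=0,\ \psi'(L)=\tfrac{k}{\lambda_0}\psi'(0)\}$; a one-line integration by parts shows that both $\widehat\cS$ and $\widehat\cS^*$ are dissipative once $\lambda_0>k$, so $\widehat\cS$ is $m$-dissipative and the equation is solvable. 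Your sketch can be completed along these lines, or alternatively by computing $\cA^*$ in the $\mathcal{V}$-inner product and checking that it is also dissipative, which by the corollary to Lumer--Phillips yields the range condition without the explicit construction.
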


\medskip
\begin{proof}
Given $(d, r)\in \L(0,L)\times\RR$
let $(\eta_\infty, w_\infty)$
the corresponding equilibrium to \eqref{sys_reg_closed_loop}
computed according to Lemma~\ref{lemma_equilibrium}.
Consider the following change of coordinates
\begin{equation}\label{eq:changeofcoor_weta}
(w,\eta)\mapsto (\widetilde w, \tilde \eta) :=
(w-w_\infty, \eta-\eta_\infty).
\end{equation}
The $(\widetilde w, \tilde \eta)$-dynamics 
is given by
\begin{equation}\label{sys_reg_closed_loop2}
\left\{
\begin{array}{lll}
\widetilde w_t+ \widetilde w_{x} +\widetilde w_{xxx}=0\,, &\qquad & (t,x)\in \RR_+\times [0,L]\,,\\
\widetilde w(t,0)=\widetilde w(t,L) = 0\,, && t\in \RR_+\,,\\
\widetilde w_{x}(t,L)=k\tilde \eta(t)\,, && t\in \RR_+\,, \\
\dot{\tilde \eta} (t) =\widetilde w_x(t,0) \,, && t\in \RR_+\,,\\
\widetilde w(0,x)=\widetilde w_0(x),\;\tilde \eta(0)=\tilde \eta_0\,, && x\in [0,L]\,,
\end{array}
\right.
\end{equation}
where $\widetilde w_0(x)=w_0(x)-w_\infty(x)$ and $\tilde \eta_0=\eta_0-\eta_\infty$. 
System \eqref{sys_reg_closed_loop2} can be rewritten, 
in the operator form, as 
$$
\dfrac{d}{dt}\tilde\zeta = \cA \tilde \zeta , 
\qquad \tilde \zeta = \zeta_0, 
\qquad
\tilde \zeta := 
\begin{pmatrix}
\tilde \eta\\ \widetilde w
\end{pmatrix}
$$
with $\cA$ and its domain $D(\cA)$ defined as in \eqref{operator_A}.
As a consequence,
systems \eqref{sys_reg_closed_loop} and \eqref{sys_reg_closed_loop2} are equivalent.  Then, if one proves that the operator $\cA$ defined in \eqref{operator_A} is a $m$-dissipative operator on $(X, \|\cdot\|_X)$, one can apply the result provided by \cite[Theorem 3.1]{brezis(1973)}, and conclude that the statement of Lemma~\ref{theo:well-posedness} holds. 
 For that, we look for an equivalent norm and a related scalar product coming from a Lyapunov functional. We will prove then the dissipativity with respect to such a  scalar product. This Lyapunov functional is built following the forwarding approach  (see e.g \cite{terrand2019adding}). 
 To simplify the notation, in the rest of this proof, we will 
 write  $(\eta,w)$ instead of $( \tilde \eta,\widetilde w)$.

Now, by recalling the definition of the operator 
 $\cS$ in given in \eqref{operator_S}, \color{black}
 we define the operator $\cM:L^2(0,1)\rightarrow \RR$ as
solution to the following Sylvester equation
\begin{equation}
\label{MM-Sylvester}    
\cM \cS w = \cC w \,,\qquad\forall w \in D(\cS)\,,
\end{equation} 
where $\cC: f\in H^1_0(0,L)\mapsto f'(0)\in \mathbb{R}$.
Since the strongly continuous semigroup generated by the operator $\cS$ is exponentially stable, the Sylvester equation \eqref{MM-Sylvester} admits a unique solution, see \cite[Lemma 22]{phong1991operator}.  Moreover, since $\cM$ is a linear form, according to Riesz representation theorem \cite[Theorem 4.11]{brezis2010functional}, the operator $\cM$ is uniquely defined as $\cM w=\int^L_0M(x)w(x)dx$.  In order to obtain an explicit solution, we  write 
 equation \eqref{MM-Sylvester} in the explicit form 
$$
 w'(0)  = 
-\int_0^L M(x)[w'(x) +w'''(x)]dx \qquad\forall w \in D(\cS).
$$
Using integration by parts we obtain 
$$
 w'(0)  = 
\int_0^L w(x)[M'(x) +M'''(x)]dx +M(0)w''(0)-M(L)w''(L)-M'(0)w'(0)\,, 
$$ 
for all $w \in D(\cS)$.
From the latter equation, we obtain the following boundary value problem
\begin{equation}\label{mm}
\left\{
\begin{array}{ll}
 M'''+M'=0\,, &\\
 M(0)=M(L) = 0 \,, &\\
M'(0)=-1 \,.
\end{array}
\right.
\end{equation}
It can be verified  that the function
\begin{equation}\label{M}
    M:x\in\mathbb R\mapsto\dfrac{-2\sin({\frac{x}{2}})\sin({\frac{L-x}{2}})}{\sin({\frac{L}{2}})}
\end{equation}
is a solution to \eqref{mm}.
Computations are omitted for space reasons.
Moreover, it is the unique solution to \eqref{mm} and the operator $\cM$ defined above is the unique solution to
the Sylvester equation \eqref{MM-Sylvester}.
Then, the operator $\cM:\L(0,L)\to \mathbb R$ can be expressed as
$\cM\varphi=\int^L_0M(x)\varphi(x)dx$.

With the operator $\cM$ so defined, 
consider the candidate Lyapunov functional $\VV: X \to \RR$ defined as
\begin{equation}
    \label{lyapunov}
    \VV(\eta,w)=V(w)+(\eta-\cM w)^2\,, 
\end{equation} 
where   $V$  is the Lyapunov functional given by Theorem \ref{theo}.
By construction,  the Lyapunov functional $\VV$ is equivalent to the standard norm on the space $X$, and in particular,  there exist positive constants $\underline \nu,\bar \nu$   such that
the following holds
\begin{equation}
\label{ve}
  \underline \nu\Vert (\eta,w)\Vert_X^2\leq \VV(\eta,w)\leq \bar\nu\Vert (\eta,w)\Vert_X^2\,,
    \qquad 
    \forall (\eta,w)\in X\,.
    \qquad 
\end{equation} 
To show this fact, note that, 
following similar arguments used in
the proof of Proposition 4 of 
\cite{terrand2019adding},
for any $\rho\in ]0,1[$ 
we have
$$
 \rho \left(\frac{1}{2}\eta^2 - \Vert M\Vert_{\L}^2\Vert w\Vert_{\L}^2\right)
\leq 
(\eta-\cM w)^2\leq 2(\eta^2+ \Vert M\Vert_{\L}^2\Vert w\Vert_{\L}^2)
$$
for all $(\eta,w)\in X$. \color{black}
Furthermore,
according to Theorem \ref{theo}, we know that $V$ satisfies the inequality \eqref{eq:Lyap-sandwich}. 
Then we have
$$
   \rho \left(\frac{1}{2}\eta^2 - \Vert M\Vert_{\L}^2\Vert w\Vert_{\L}^2\right)+ \underline\alpha\Vert w\Vert_\L^2\leq \VV(w)\leq 2\left(\eta^2+ \Vert M\Vert_{\L}^2\Vert w\Vert_{\L}^2\right)+ \bar\alpha\Vert w\Vert_\L^2\,.
$$  
Therefore, by selecting $\rho$ sufficiently small,
inequality \eqref{ve} holds for some $\bar \nu > \underline \nu>0$.
 By recalling that the function $V$ established 
 in Theorem~\ref{theo} is of the form $V = E + W$, where 
 $E$ and $W$ are quadratic forms of the $L^2$ norm of $w$,
 from the Lyapunov functional $\VV$ defined in \eqref{lyapunov}, 
 we can also   deduce a scalar product, that we define as follows
\begin{equation}
\label{lyapunov-scalar-product}
\left\langle \begin{bmatrix}
\eta_1 & w_1
\end{bmatrix}^\top,\begin{bmatrix}
\eta_2 & w_2
\end{bmatrix}^\top \right\rangle_{\VV} := \big(\eta_1-\cM w_1\big)\big(\eta_2-\cM w_2\big) + \langle w_1,w_2\rangle_\L+\langle \Pi w_1,\Pi w_2\rangle_\L\,,
\end{equation} 
with  $\Pi$ being the linear operator given by  Theorem~\ref{theo}.
\color{black}
It is equivalent to the usual scalar product in $X$.

Now, we are in position to prove that $\cA$ is $m-$dissipative
according to \cite{pazy}.  For this,  we need to show
that  $\cA$ is dissipative and maximal.
We begin with showing the dissipative properties. To this end,  we use the scalar product given in \eqref{lyapunov-scalar-product}. 
By using the definition of $\cA$ given in \eqref{operator_A}, 
we obtain, for all $\zeta\in D(\cA)$, 
\begin{align}
\label{dissipative}
\notag
\langle \cA\zeta,\zeta\rangle_{\VV} =&\big(w'(0)+\cM(w'''+w')\big)\big(\eta-\cM w\big)-\langle w'+w''',w\rangle_\L-\langle \Pi(w'''+w'),\Pi w\rangle_\L\\
=&\Big( w'(0)+\int_0^L M(x)[ w'(x) + w'''(x)]dx\Big)\Big( \eta -\cM w\Big)-\langle w'+w''',w\rangle_\L 
-\langle \Pi(w'''+w'),\Pi w\rangle_\L \,.
\end{align} 
For the first term, it can be shown, after some integrations by parts,
that \color{black}
\begin{equation}
\label{ma}
  \int_0^L M(x)[ w'(x) + w'''(x)]dx=-k \eta -   w'(0)
 \end{equation}
 for all $\zeta\in D(\cA)$.
 Then,
for the second term, we 
recall the ISS properties of the function $V$ stated in 
Theorem~\ref{theo}. In particular, applying
the inequality \eqref{eq:Lyap-ISS}
to the system \eqref{sys_reg_closed_loop}, in which
$d$ is the distributed disturbance (thus having the role of $d_1$) and
$k\eta$ is seen as a disturbance acting at the boundary condition (thus having the role of $d_2$), we obtain \color{black} 
 \begin{equation}
 \label{maa}
     -2\langle w'+w''',w\rangle_\L-2\langle \Pi(w'''+w'),\Pi w\rangle_\L\leq -\alpha \Vert  w\Vert_\L^2  + \sigma_2 k^2 \eta^2  \,.
 \end{equation}
 for all $\zeta\in D(\cA)$.
 Hence, \color{black}combining inequalities \eqref{dissipative}
 with \eqref{ma} and \eqref{maa}, we obtain
 \begin{align*}
\label{dissipative1}
\langle \cA\zeta,\zeta\rangle_{\VV} \leq & -k\eta\big(\eta-\cM w\big)
-\frac{\alpha}{2} \Vert  w\Vert_\L^2  + \frac{\sigma_2}{2} k^2 \eta^2 
\\
& \leq - k\left(1 - \left(\frac{\sigma_2}{2} + \dfrac{\Vert M\Vert_{\L}}{4\alpha} \right)k  \right) \eta^2 - \dfrac\alpha4 \Vert w\Vert^2_{\L} 
\end{align*} 
for all $\zeta\in D(\cA)$, 
where the second inequality has been obtained by using
 Young's inequality.
 As a consequence, we can select
$$
    k^\star_0=\left(\frac{\sigma_2}{2} + \dfrac{\Vert M\Vert_{\L}}{4\alpha} \right)^{-1}.
$$
This implies that for any $k\in(0,k_0^\star)$
there exists $\varepsilon>0$ such that 
we have 
\begin{equation}
    \label{eq:A_dissipative}
    \langle \cA\zeta,\zeta\rangle_{\VV}\leq - \varepsilon (|\eta|^2+  \Vert w\Vert^2_{\L} )
\end{equation}
for all $\zeta\in D(\cA)$, which shows that the operator $\cA$ is dissipative.

Now, we want to show that $\cA$ is a maximal operator. \color{black}
According to  L\"umer$-$Phillips theorem \cite[Theorem 4.3]{pazy}, proving that $\cA$ is maximal reduces to show that there exists a positive
$\lambda_0$
such that  for all $\zeta\in X$, there exists $\tilde{\zeta}\in D(\cA)$ such that
$(\lambda_0I_X-\cA) \tilde{\zeta} = \zeta$.
Let $(\eta,w)\in X$. We look for a
$(\tilde{\eta},\tilde{w})\in D(\cA)$ satisfying
\begin{equation}
\label{eq-maximal}
\left\{
\begin{array}{lll}
\tilde{w}''' +\tilde{w}'+\lambda_0\tilde{w}=w\,, &\qquad & x\in [0,L]\,, \\
\tilde{w}(0)=\tilde{w}(L)=0\,, \\
\tilde{w}'(L)=k\tilde{\eta}\,,\\
\lambda_0\tilde{\eta}-\tilde{w}'(0)=\eta\,,
\end{array}
\right.
\end{equation} 
namely
\begin{equation*}
\left\{
\begin{array}{lll}
\tilde{w}''' +\tilde{w}'+\lambda_0\tilde{w}=w\,, &\qquad& x\in [0,L]\,, \\
\tilde{w}(0)=\tilde{w}(L)=0\,, \\
\tilde{w}'(L)=\frac{k}{\lambda_0}(\eta+\tilde{w}'(0))\,,\\
\lambda_0\tilde{\eta}-\tilde{w}'(0)=\eta\,.
\end{array}
\right.
\end{equation*}
Now, we consider the following boundary  value problem
\begin{equation*}
\left\{
\begin{array}{lll}
\tilde{w}''' +\tilde{w}'+\lambda_0\tilde{w}=w\,, &\qquad& x\in [0,L]\,, \\
\tilde{w}(0)=\tilde{w}(L)=0\,, \\
\tilde{w}'(L)=\frac{k}{\lambda_0}(\eta+\tilde{w}'(0))\,,
\end{array}
\right.
\end{equation*} 
and the smooth function $\tilde\phi(x)= \frac{k\eta x^2(x-L)}{\lambda_0 L^2}$ satisfying the boundary conditions
$$
\tilde\phi(0)=\tilde\phi(L)=\tilde\phi'(0)=0\,,\qquad \tilde\phi'(L)=\frac{k}{\lambda_0}\eta\,.
$$ 
We set $\tilde\psi=\tilde w-\tilde\phi$. Then $\tilde\psi$ satisfies the following boundary value problem
\begin{equation}\label{sys_3}
\left\{
\begin{array}{lll}
\tilde\psi' +\tilde\psi'''+\lambda_0\tilde\psi=\tilde j(x)\,, &&
x\in [0,L]\,,\\
\tilde\psi(0)=\tilde \psi(L) = 0 \,,&\\
\tilde\psi'(L)=\frac{k}{\lambda_0}\tilde \psi'(0)\,,
\end{array}
\right.
\end{equation} where $\tilde j(x)=w(x)-\tilde\phi'(x)-\tilde\phi'''(x)-\lambda_0\tilde\phi$. 
Now, we define the operator $\widehat{\cS}$ and its domain $D(\widehat{\cS})\subset L^2(0,L)$  as
$$
\widehat{\cS}\psi=-\psi' -\psi''', 
\qquad 
D(\widehat{\cS}):= \Big\lbrace\psi \in H^3(0,L): \psi(0)=\psi(L)=0, \psi^\prime(L)=\frac{k}{\lambda_0}\psi^\prime(0)\Big\rbrace \,.
$$
We define also its  adjoint operator $\widehat{\cS}^*$ 
and its domain $D(\widehat{\cS}^*)$ as
$$
\widehat{\cS}^*\psi=\psi'''+\psi', 
\qquad
D(\widehat{\cS}^*):= \Big\lbrace\psi \in H^3(0,L): \psi(0)=\psi(L)=0,\psi^\prime(0)=\frac{k}{\lambda_0}\psi^\prime(L)
\Big\rbrace \,.
$$
Note that $\widehat{\cS}$ and $\widehat{\cS}^*$ are dissipative. 
Indeed, by selecting $\lambda_0>k$, we have 
\begin{equation*}
\begin{array}{rcll} \displaystyle
    \int^L_0\psi\widehat{\cS}\psi dx=\left(\frac{k}{\lambda_0}-1\right)\psi^\prime(0)^2 &<& 0 \,,
\qquad \qquad & \psi\in D(\widehat{cS})\,,
\\[1em] \displaystyle
    \int^L_0\psi\widehat{\cS}^*\psi dx=\left(\frac{k}{\lambda_0}-1\right)\psi^\prime(L)^2&<& 0 \,,
    & \psi\in D(\widehat{\cS}^*)\,.
    \end{array}
\end{equation*} 
Moreover, $\widehat{\cS}$  is closed and $D(\widehat{\cS})$ is dense in $\L(0,L)$. Then, according to \cite[Theorem 4.3 and Corollary 4.4]{pazy} $\widehat{\cS}$ is $m$-dissipative operator.
Finally, since $\widehat{\cS}$ is a $m$-dissipative operator then the system \eqref{sys_3} admits a solution $\tilde\psi$ in $D(\widehat{\cS})$. As a consequence, there exist $(\tilde \eta,\tilde w)\in D(\cA)$ solution of \eqref{eq-maximal}. This proves that $\cA$ is maximal and concludes the proof of Lemma~\ref{theo:well-posedness}.
\end{proof} 

Finally, the next result deals with the exponential stability of equilibrium state  $(\eta_\infty, w_\infty)$ and with the related output regulation
 objective \eqref{eq:regulation_objective}. 

\medskip
\begin{theorem}[Stabilization and regulation]\label{regulation}
Let $L\notin \mathcal{N}$ and consider system \eqref{sys_reg_closed_loop}. 
For any  $k\in (0,k_0^\star)$,
with  $k_0^\star$ given by Lemma~\ref{theo:well-posedness}, 
there exist
$b_0,\nu_0>0$, and for any 
 $(d, r)\in \L(0,L)\times\RR$
 there exists $(\eta_\infty,w_\infty)\in X$, 
 computed according to Lemma~\ref{lemma_equilibrium},
 such that 
any solution to system \eqref{sys_reg_closed_loop} 
with initial condition $(\eta_0,w_0)\in X$
satisfies
\begin{equation}
\label{exp}
    \Vert (\eta(t),w(t,\cdot))-(\eta_\infty,w_\infty)\Vert_X\leq b_0 e^{-\nu_0 t}\Vert (\eta_0,w_0)-(\eta_\infty,w_\infty)\Vert_X.
\end{equation}
for all $t\geq0$.
 Moreover, for any strong solution to \eqref{sys_reg_closed_loop}, 
 and in particular, for any 
 $(\eta_0,w_0)\in D(\cA)$,
 the output $y$ is asymptotically regulated at the reference $r$, namely 
 \eqref{eq:regulation_objective} is satisfied.
\end{theorem}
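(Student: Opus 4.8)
The plan is to reuse the strict dissipativity of $\cA$ already obtained in the proof of Lemma~\ref{theo:well-posedness} and then to exploit the abstract semigroup framework. First I would pass to the shifted coordinates $\tilde\zeta = \zeta - (\eta_\infty,w_\infty)$, where $(\eta_\infty,w_\infty)$ is the equilibrium furnished by Lemma~\ref{lemma_equilibrium}; in these coordinates the closed loop reduces to $\frac{d}{dt}\tilde\zeta = \cA\tilde\zeta$ as in \eqref{sys_reg_closed_loop2}, and for $k\in(0,k_0^\star)$ the operator $\cA$ generates a $C_0$-semigroup of contractions $(T(t))_{t\geq0}$ on the Hilbert space $(X,\langle\cdot,\cdot\rangle_\VV)$. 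Throughout I would use that the Lyapunov functional satisfies $\VV(\zeta)=\langle\zeta,\zeta\rangle_\VV$, by comparing \eqref{lyapunov} with \eqref{lyapunov-scalar-product}.

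For the exponential estimate \eqref{exp}, I would combine the strict dissipativity \eqref{eq:A_dissipative} with the norm equivalence \eqref{ve}. Since $\VV(\zeta)=\Vert\zeta\Vert_\VV^2\leq\bar\nu\Vert\zeta\Vert_X^2$, inequality \eqref{eq:A_dissipative} upgrades to $\langle\cA\zeta,\zeta\rangle_\VV\leq-\omega\Vert\zeta\Vert_\VV^2$ for all $\zeta\in D(\cA)$, with $\omega:=\varepsilon/\bar\nu$. Differentiating $t\mapsto\Vert T(t)\tilde\zeta_0\Vert_\VV^2$ along strong solutions then yields $\frac{d}{dt}\Vert T(t)\tilde\zeta_0\Vert_\VV^2\leq-2\omega\Vert T(t)\tilde\zeta_0\Vert_\VV^2$, so Gr\"onwall's inequality gives $\Vert T(t)\tilde\zeta_0\Vert_\VV\leq e^{-\omega t}\Vert\tilde\zeta_0\Vert_\VV$; a standard density argument (as invoked elsewhere in the paper) extends this to every $\tilde\zeta_0\in X$. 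Converting back to the $X$-norm via \eqref{ve} produces \eqref{exp} with $b_0=\sqrt{\bar\nu/\underline\nu}$ and $\nu_0=\omega$, valid for all initial conditions in $X$.

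For the regulation objective, the key observation I would exploit is that, in the shifted coordinates, the tracking error coincides with the first component of $\cA\tilde\zeta$. Indeed, by the definition of $\cA$ in \eqref{operator_A} together with $w_\infty'(0)=r$ from Lemma~\ref{lemma_equilibrium}, one has $e(t)=y(t)-r=w_x(t,0)-w_\infty'(0)=\tilde w_x(t,0)=\big(\cA\tilde\zeta(t)\big)_1$. For a strong solution with $\tilde\zeta_0\in D(\cA)$, the semigroup commutes with its generator, so $\cA\tilde\zeta(t)=T(t)(\cA\tilde\zeta_0)$; applying the exponential estimate just proved to the datum $\cA\tilde\zeta_0\in X$ gives $\Vert\cA\tilde\zeta(t)\Vert_X\leq b_0 e^{-\nu_0 t}\Vert\cA\tilde\zeta_0\Vert_X$. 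Since $\Vert\cA\tilde\zeta(t)\Vert_X^2=|e(t)|^2+\Vert\tilde w_x+\tilde w_{xxx}\Vert_\L^2\geq|e(t)|^2$, I conclude not merely $\lim_{t\to\infty}|e(t)|=0$ but in fact exponential decay of the tracking error, which establishes \eqref{eq:regulation_objective}.

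The main obstacle is not the exponential stability, which is essentially a direct consequence of the dissipativity and norm equivalence already in hand; rather, the subtle point is the regulation claim. The crucial idea is to recognize that the output error is encoded as the first coordinate of $\cA\tilde\zeta$, so that the decay of $e(t)$ follows from the decay of $\cA\tilde\zeta(t)$ through the commutation $\cA T(t)=T(t)\cA$ on $D(\cA)$. This route genuinely requires working with strong solutions, so that $\tilde\zeta(t)$ remains in $D(\cA)$ and $\cA\tilde\zeta(t)$ makes pointwise sense, which is precisely why the regulation part of the statement is restricted to $(\eta_0,w_0)\in D(\cA)$.
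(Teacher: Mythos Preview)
Your argument is correct and follows essentially the same route as the paper. For the exponential estimate, both you and the paper combine the strict dissipativity \eqref{eq:A_dissipative} with the norm equivalence \eqref{ve} and Gr\"onwall's lemma, extending from $D(\cA)$ to $X$ by density. For the regulation part, the paper introduces the auxiliary variables $(v,\xi):=(\widetilde w_t,\dot{\tilde\eta})$ and observes that they satisfy the same closed-loop system with initial datum $(-\widetilde w_0'-\widetilde w_0''',\widetilde w_0'(0))\in X$; this is exactly your semigroup identity $\cA T(t)\tilde\zeta_0=T(t)\cA\tilde\zeta_0$ written in coordinates. Where the approaches differ is only in the final extraction of $e(t)=\widetilde w_x(t,0)$: the paper obtains the decay of $\Vert\widetilde w_t\Vert_{\L}$ and then multiplies the $\widetilde w$-equation by $\widetilde w$ and integrates by parts to recover $|\widetilde w_x(t,0)|^2\leq \Vert\widetilde w\Vert_{\L}\Vert\widetilde w_t\Vert_{\L}+k^2|\tilde\eta|^2$, whereas you simply read off $e(t)$ as the first component of $\cA\tilde\zeta(t)$ and use $|e(t)|\leq\Vert\cA\tilde\zeta(t)\Vert_X$. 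Your route is shorter and yields the same (in fact an explicitly exponential) decay of the tracking error; the paper's extra integration-by-parts step is not strictly needed here, though it does generalize more readily to the nonlinear case treated later, where the commutation with the generator is no longer available.
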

\medskip

\begin{proof}  
The first part of the proof is proved for 
 any initial condition $(\eta_0, w_0)\in D(\cA)$.
 The result follows for all initial conditions in $X$ by a standard density argument 
(see e.g. \cite[Lemma 1]{marx2017cone}). 
Consider the  equilibrium   $(\eta_\infty, w_\infty)$,
recall the change of coordinates 
defined in  \eqref{eq:changeofcoor_weta} and consider
the error system \eqref{sys_reg_closed_loop2}.
We show  now that the its origin
is  exponentially stable.
 To this end,  consider the Lyapunov functional $\VV$ defined in \eqref{lyapunov}. 
According to the proof of dissipativity of $\cA$ of Lemma~\eqref{theo:well-posedness}, 
for any $k\in (0,k^\star)$
the time derivative of $\VV$ 
along the strong solution to \eqref{sys_reg_closed_loop2} 
satisfies \eqref{eq:A_dissipative}. \color{black}
As a consequence, from \eqref{ve} and  Gr\"onwall's lemma, there exist positive constants $b_0, \nu_0$  such that,  for all $(\eta_0,w_0)\in D(\cA)$ and for all $t\geq0$
\begin{equation}\label{barw}
    \Vert (\tilde \eta(t),\widetilde w(t,\cdot))\Vert_X\leq b_0 e^{-\nu_0 t}\Vert (\tilde \eta_0,\widetilde w_0)\Vert_X.
\end{equation} 
By using the density of  $D(\cA)$ in $X$, 
and the change of coordinates 
 \eqref{eq:changeofcoor_weta}, 
we conclude that \eqref{exp} holds.

Now, we need to show that the regulation objective
\eqref{eq:regulation_objective} is achieved for strong solutions. 
\color{black} For this, note that if  $(\eta_0,w_0)\in D(\cA)$, then $(\tilde \eta_0,\widetilde w_0)\in D(\cA)$. Then $(\tilde \eta,\widetilde w)\in C^1(\RR_+;X)\cap C^0(\RR_+;D(\cA))\big)$. Now, let us introduce the new variables $v, \xi$ defined as follows \begin{equation}
    \label{eq:change_of_coord_vxi}
(\widetilde w, \tilde \eta) \mapsto  (v,\xi) := (\widetilde w_t ,\dot{\tilde \eta}) \,.
\end{equation}
The dynamics of $(v, \xi)$ is given as
\begin{equation}\label{bar}
\left\{
\begin{array}{lll}
 v_t+  v_{x} + v_{xxx}=0\,, &\qquad &
(t,x)\in \RR_+\times [0,L]\,,\\
 v(t,0)= v(t,L) = 0 \,,&&t\in \RR_+\\
 v_{x}(t,L)=k \xi(t)\,,&&t\in \RR_+ \\
\dot{\xi} (t) = v_x(t,0) \,,&&t\in \RR_+\\
 v(0,x)=v_0(x),\xi(0)=\xi_0
\,,&&x\in [0,L]\,.
\end{array}
\right.
\end{equation} 
with  
\begin{equation}
    \label{eq:cond_init_vxi}
v_0(x) = -\widetilde w_0'(x)-\widetilde w_0'''(x), 
\quad x\in [0,L], 
\qquad\xi_0= \widetilde w_0'(0).
\end{equation}
Since $(v(0,\cdot), \xi(0))\in X$, then, according to the Lemma~\ref{theo:well-posedness} and the first statement of Theorem \ref{regulation}, we have  $(v, \xi)\in C^0(\RR_+;X)$ and 
\begin{equation*}
    \Vert (\xi(t), v(t,\cdot))\Vert_X\leq b_0e^{-\nu_0 t}\Vert 
    (\xi(0), v(0,\cdot))\Vert_X \,, \qquad \forall 
  (v_0,\xi_0)\in X\,.
\end{equation*}
By definition of $v$ and $\xi$ and using \eqref{eq:cond_init_vxi}, one can see that, once one considers $(v_0,\xi_0)\in X$, then this implies that $(\eta_0,w_0)\in D(\cA)$. 
Then, using 
the definition of
the change of coordinates
\eqref{eq:change_of_coord_vxi} and \eqref{eq:cond_init_vxi}
we obtain
\begin{equation}\label{barwt}
    \Vert \widetilde w_t(t,\cdot) \Vert_\L \leq
     \Vert (\dot {\tilde\eta}(t), \widetilde w_t(t,\cdot)) 
     \Vert_X  
      \leq
      b_0 e^{-\nu_0 t}\Vert (-\widetilde w_0'(x)-\widetilde w_0'''(x),\widetilde w_0'(0))
     \Vert_X \,,
     \qquad \forall w_0\in D(\cA)\,.
\end{equation} \color{black}
Now, by multiplying the first equation of  \eqref{sys_reg_closed_loop2} by $\widetilde w$ and integrating by parts, we get after some computations
\begin{equation*}
    k^2\tilde \eta(t)^2-\widetilde w_x(t,0)^2=\int^L_0 \widetilde w(t,x)\widetilde w_t(t,x)dx\,.
\end{equation*} 
Using Cauchy-Schwarz's inequality, from \eqref{barw} and \eqref{barwt} we finally obtain 
\begin{equation*}
    \vert\widetilde w_x(t,0)\vert^2\leq\Vert \widetilde w(t,\cdot)\Vert_\L\Vert\widetilde w_t(t,\cdot)\Vert_\L+k^2\vert\tilde \eta(t)\vert^2 \underset{t\to\infty}{\longrightarrow} 0\,,
    \qquad \forall (\eta_0,w_0)\in D(\cA)\,.
\end{equation*} 
From the previous inequality  we obtain
$    \lim_{t\to\infty}\vert \widetilde w_x(t,0)\vert= 
     \lim_{t\to\infty}\vert w_x(t,0)-r\vert
 =    0 $
for all $(\eta_0,w_0)\in D(\cA)$,
and therefore 
\eqref{eq:regulation_objective},
concluding the proof.
\end{proof}

\subsection{Regulation of nonlinear KdV equation by means of the forwarding method}
\label{sec_reg_nonlinear}
In this section, we consider the 
regulation problem for a nonlinear KdV equation
\eqref{eq_kdv_nonlin}. In particular, we consider the 
system
\begin{equation}\label{sys_reg_nln}
\left\{
\begin{array}{lll}
w_t+w_{x} +w_{xxx}+ww_x=d(x)\,,& \qquad& (t,x)\in \RR_+\times [0,L]\,, \\
w(t,0)= w(t,L) = 0 \,,&&  t\in \RR_+\,,\\
w_{x}(t,L)=u(t)\,,&&  t\in \RR_+\,, \\
w(0,x)=w_0(x)\,, && x\in [0,L]\,,\\
y(t) = w_x(t,0)  \,, & & t\in \RR_+\,,
\end{array}
\right.
\end{equation} 
where $d\in \L(0,L)$ is a constant  perturbation, $u\in \RR$ is the control input and $y(t)\in \RR$ is the output to be regulated
to a constant reference $r$ as in \eqref{eq:regulation_objective}.
Following the design proposed  in Section~\ref{sec_reg_linear}
for the linear model \eqref{eq_kdv_lin}, we 
consider the  same output-feedback  integral control 
\eqref{control} and 
we compactly write  the  closed-loop system 
\eqref{sys_reg_nln}, \eqref{control} as
\begin{equation}\label{sys_reg_closed_loop_nln}
\left\{
\begin{array}{lll}
w_t+w_{x} +w_{xxx}+ww_x=d(x)\,, &\qquad & (t,x)\in \RR_+\times [0,L]\,,\\
w(t,0)= w(t,L) = 0 \,,&&  t\in \RR_+\,,\\
w_{x}(t,L)=k\eta(t) \,,&&  t\in \RR_+\,,\\
\dot \eta(t) = w_x(t,0) -r \,,&&  t\in \RR_+\,,\\
w(0,x)=w_0(x),\eta(0)=\eta_0  \,, && x\in [0,L]\,.
\end{array}
\right.
\end{equation}
In the following, we will show that 
for sufficiently small perturbations $d$ and references $r$
the closed-loop system \eqref{sys_reg_closed_loop_nln}
is well posed and it admits a unique equilibrium which 
is locally exponentially stable. Furthermore, 
for solutions which are sufficiently regular, 
the regulation objective \eqref{eq:regulation_objective}
is satisfied.
We start by showing the existence and uniqueness
of an equilibrium.

\medskip

\begin{lemma}\label{lemmanonlinear}
There exist 
 $\bar d>0$
and $\bar r>0$  such that, for any $(d, r)\in L^2(0,L)\times \RR$
satisfying  $\Vert d\Vert_\L\leq\bar d$ and  $\vert r\vert\leq \bar r$,  there exists a unique equilibrium state 
$(\eta_\infty,w_\infty)\in X$ to
system \eqref{sys_reg_closed_loop_nln}. 
Furthermore there exists $\overbar w>0$ such that 
for any $\overbar w_0\in (0,\overbar w]$, 
there exists $d_0>0$ and 
$r_0>0$ so that, for any 
$(d, r)\in L^2(0,L)\times \RR$
satisfying 
$\Vert d\Vert_\L\leq d_0$
and 
$\vert r\vert\leq r_0$ then
$\Vert w_{\infty}\Vert_{H^3}\leq \overbar w_0$.
\end{lemma}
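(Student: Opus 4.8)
The plan is to follow the proof of Lemma~\ref{lemma_equilibrium}, now treating the nonlinear term $w_\infty w_\infty'$ as a small perturbation and closing the argument by a contraction mapping valid for small data. Setting $w_t=0$ and $\dot\eta=0$ in \eqref{sys_reg_closed_loop_nln}, an equilibrium $(\eta_\infty,w_\infty)$ must satisfy
\begin{equation*}
w_\infty'+w_\infty'''+w_\infty w_\infty'=d,\qquad w_\infty(0)=w_\infty(L)=0,\qquad w_\infty'(0)=r,
\end{equation*}
together with $\eta_\infty=w_\infty'(L)/k$ (well defined since $k>0$). As in Lemma~\ref{lemma_equilibrium} I would set $\phi(x)=\frac{rx(L-x)}{L}$ and $\psi=w_\infty-\phi$, so that $\psi(0)=\psi(L)=\psi'(0)=0$ and, with the operator $\cS^*$ introduced in the proof of Lemma~\ref{lemma_equilibrium},
\begin{equation*}
\cS^*\psi=\psi'''+\psi'=j-N(\psi),\qquad j:=d-\phi',\qquad N(\psi):=(\psi+\phi)(\psi'+\phi').
\end{equation*}
Since $0$ is not an eigenvalue of $\cS^*$ and $\cS^*$ has compact resolvent, $(\cS^*)^{-1}$ is a bounded operator from $\L(0,L)$ into $D(\cS^*)\subset H^3(0,L)$, of norm $C$, and the equilibrium problem is equivalent to the fixed-point equation $\psi=\cT(\psi):=(\cS^*)^{-1}[j-N(\psi)]$.

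Next I would run a Banach contraction argument in the ball $B_\rho=\{\psi\in D(\cS^*):\Vert\psi\Vert_{H^3}\le\rho\}$. Using the one-dimensional embeddings $H^3\hookrightarrow C^2$ and $H^1\hookrightarrow L^\infty$, together with $\Vert\phi\Vert_{H^1}\le c|r|$, one obtains $\Vert N(\psi)\Vert_\L\le\Vert\psi+\phi\Vert_{L^\infty}\Vert\psi'+\phi'\Vert_\L\le c_N(\rho+|r|)^2$ on $B_\rho$, the Lipschitz bound $\Vert N(\psi_1)-N(\psi_2)\Vert_\L\le c_N(\rho+|r|)\Vert\psi_1-\psi_2\Vert_{H^3}$, and $\Vert j\Vert_\L\le\Vert d\Vert_\L+c|r|$. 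Hence $\Vert\cT(\psi)\Vert_{H^3}\le C(\Vert d\Vert_\L+c|r|)+Cc_N(\rho+|r|)^2$ and $\Vert\cT(\psi_1)-\cT(\psi_2)\Vert_{H^3}\le Cc_N(\rho+|r|)\Vert\psi_1-\psi_2\Vert_{H^3}$. Fixing $\rho$ small enough that $4Cc_N\rho^2\le\rho/2$ and $2Cc_N\rho<1$, and then choosing $\bar d,\bar r$ small (with $\bar r\le\rho$) so that $C(\bar d+c\bar r)\le\rho/2$, the map $\cT$ sends $B_\rho$ into itself and is a contraction. Banach's theorem yields a unique fixed point $\psi_\infty\in B_\rho$, hence a locally unique equilibrium $w_\infty=\psi_\infty+\phi\in H^3(0,L)$ and $\eta_\infty=w_\infty'(L)/k$, proving the first claim.

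For the second claim I would extract a quantitative bound from the same estimates. Let $\overbar w$ be small enough that the contraction above holds inside $B_{\overbar w}$; fix any tolerance $\overbar w_0\in(0,\overbar w]$ and rerun the argument in the ball $B_{\overbar w_0/2}$. Choosing $d_0,r_0$ small enough (depending on $\overbar w_0$) so that $C(\Vert d\Vert_\L+c|r|)+Cc_N(\overbar w_0/2+|r|)^2\le\overbar w_0/2$, the self-mapping property holds and the unique fixed point obeys $\Vert\psi_\infty\Vert_{H^3}\le\overbar w_0/2$; shrinking $r_0$ further so that $\Vert\phi\Vert_{H^3}\le c|r|\le\overbar w_0/2$, we conclude $\Vert w_\infty\Vert_{H^3}\le\Vert\psi_\infty\Vert_{H^3}+\Vert\phi\Vert_{H^3}\le\overbar w_0$.

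The main obstacle is the nonlinear estimate on $N(\psi)=(\psi+\phi)(\psi'+\phi')$: controlling it in $\L(0,L)$ by the $H^3$-norm forces the fixed point to be sought in $H^3$ rather than merely in $\L(0,L)$, so that $w_\infty w_\infty'\in\L(0,L)$ and the inversion of $\cS^*$ closes, and one must keep careful track of the $r$-dependence entering through $\phi$ in both the quadratic self-mapping bound and the Lipschitz bound. An equally viable alternative is the implicit function theorem: the map $(w,d,r)\mapsto(w'''+w'+ww'-d,\,w'(0)-r)$ is smooth from $\{w\in H^3(0,L):w(0)=w(L)=0\}\times\L(0,L)\times\RR$ into $\L(0,L)\times\RR$ (the quadratic $w\mapsto ww'$ being smooth since $H^3(0,L)$ is an algebra in dimension one), its partial differential in $w$ at the origin is exactly the isomorphism $h\mapsto(h'''+h',h'(0))$ established in Lemma~\ref{lemma_equilibrium}, and the resulting $C^1$ branch $w_\infty(d,r)$ with $w_\infty(0,0)=0$ gives both the local uniqueness and, by continuity, the smallness required for the second claim.
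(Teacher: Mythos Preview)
Your proposal is correct and follows essentially the same strategy as the paper: a Banach fixed-point argument in $H^3$ for small data, with the nonlinear term $w_\infty w_\infty'$ controlled via the algebra property of $H^3(0,L)$ in one dimension. The only real difference is in the setup: you first homogenize the boundary condition $w_\infty'(0)=r$ via the auxiliary $\phi(x)=\tfrac{rx(L-x)}{L}$ (as in Lemma~\ref{lemma_equilibrium}) and then run the contraction on $\psi=w_\infty-\phi$ in $D(\cS^*)$ using the operator inverse $(\cS^*)^{-1}$, whereas the paper keeps the inhomogeneous condition and works directly in the affine space $H^3_r(0,L)=\{w\in H^3:w(0)=w(L)=0,\ w'(0)=r\}$, equipping it with the norm $\|w'+w'''\|_{L^2}$. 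Your route makes the $r$-dependence explicit through $\phi$ in both the self-mapping and Lipschitz estimates, which is arguably cleaner; the paper instead recovers $\bar r$ a posteriori from a trace inequality on $w_\infty'(0)$. The implicit function theorem alternative you sketch at the end is also valid and is genuinely different from what the paper does, but your primary contraction argument already matches the paper's.
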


\medskip

\begin{proof}
Consider the following boundary value problem 
\begin{equation}\label{sys_equilibre_nln0}
\left\{
\begin{array}{lll}
w_\infty'(x) +w_\infty'''(x)+w_\infty(x)w_\infty'(x)=d(x)\,, &\qquad& x\in [0,L]\,,\\
w_\infty(0)= w_\infty(L) = 0\,, &\\
w_\infty'(0)= r\,,
\end{array}
\right.
\end{equation} 
which represents the nonzero equilibrium state of \eqref{sys_reg_closed_loop}, with 
$\eta_\infty=\frac{w_\infty'(L)}{k}$.
We prove that there exists a solution to  system 
\eqref{sys_equilibre_nln0}
by following a fixed-point strategy. We set 
$$
H^3_r(0,L):=\Big\lbrace w\in H^3(0,L): w(0)=w(L)=0, 
w^\prime(0)=r
\Big\rbrace\,,
$$ 
and we introduce the operator $\cT_0:H^3_r(0,L)\to H^3_r(0,L)$ defined by $\cT_0(w)= \varphi$ where $\varphi$ is the solution to
\begin{equation}\label{sys_equilibre_nln1}
\left\{
\begin{array}{lll}
 \varphi'(x) + \varphi'''(x)=d(x)-w_\infty(x)w_\infty'(x)\,, &\qquad& x\in [0,L]\,,\\
 \varphi(0)=  \varphi(L) = 0 \,,&\\
 \varphi'(0)= r\,,
\end{array}
\right.
\end{equation}
Note that the function $\Vert \cdot\Vert_{H^3_r}$ : $w\in H^3(0,L) \mapsto  \Vert w'+w'''\Vert_\L \in \mathbb{R}_+$ is a semi-norm on the space $ H^3(0,L)$. Furthermore,  $H^3_r(0,L)\subset H^1_0(0,L)$. Then, according to the Poincar\'e's inequality, the semi-norm $\Vert \cdot\Vert_{H^3_r}$ is a norm on the space $H^3_r(0,L)$ which is equivalent to the standard norm induced by $ H^3(0,L)$. In other words, there exists a positive constant $\kappa$  such that 
\begin{equation}\label{equivalenceseminorme}
     \Vert w\Vert_{ H^3_r}\leq \Vert w\Vert_{ H^3(0,L)} \leq \kappa \Vert w\Vert_{ H^3_r} \,,\qquad \forall w\in H^3_r(0,L)\,.
\end{equation}
Now, we have, 
\begin{align*}
\notag
    \Vert \cT_0(w)\Vert_{ H^3_r}=&\Vert d-ww'\Vert_\L\\\notag
    \leq&\Vert d\Vert_\L+\Vert ww'\Vert_\L\\
    \leq&\Vert d\Vert_\L+
    \Vert w\Vert_{L^\infty}\Vert w'\Vert_\L \,,
\end{align*}
for all $w\in H^3_r(0,L)$. Denoting with the constant $\ell$ the norm of the embedding $H^3(0,L)$ in $L^\infty(0,L)$, according to the Rellich-Kondrachov Theorem (see  \cite[Theorem 9.16]{brezis2010functional}), we have
\begin{align*}
\notag
    \Vert \cT_0(w)\Vert_{ H^3_r}\leq&\Vert d\Vert_\L+\ell\Vert w\Vert_{ H^3(0,L)}\Vert w'\Vert_\L\\
    \leq&\Vert d\Vert_\L+\kappa\ell\Vert w\Vert_{ H^3_r}^2\\ \notag
    \leq&\bar d+\kappa\ell\Vert w\Vert_{ H^3_r}^2 \,,
\end{align*}
for all $w\in H^3_r(0,L)$ and all $d$ satisfying $|d|_{\L}\leq \bar d$.
Moreover, we have for all $w_1,w_2\in H^3_r(0,L) $
\begin{align*}
\label{lipshiz}\notag
    \Vert \cT_0(w_1)- \cT_0(w_2)\Vert_{ H^3_r}&=\Vert w_1w'_1-w_2w'_2\Vert_\L\\\notag
    &\leq\Vert (w_1-w_2)w'_1\Vert_\L+\Vert w_2(w'_1-w'_2)\Vert_\L\\\notag
    &\leq \kappa\ell\Vert w_1-w_2\Vert_{ H^3_r}\Vert w_1\Vert_{ H^3_r}+\kappa\ell \Vert w_1-w_2\Vert_{H^3_r}\Vert w_2\Vert_{H^3_r}\\
    &\leq \kappa\ell\big(\Vert w_2\Vert_{ H^3_r}+\Vert w_2\Vert_{ H^3_r}\big)\Vert w_1-w_2\Vert_{ H^3_r}.
\end{align*}
We consider now the operator $\cT_0$
defined as in \eqref{sys_equilibre_nln1},
restricted on the closed ball 
$$
B_{\overbar w}:=\Big\lbrace w\in H^3_r(0,L): \Vert w\Vert_{H^3_r}\leq \overbar w\Big\rbrace
$$ 
with $\overbar w$ to be chosen later. Then, 
collecting all the previous inequality
we have
\begin{align*}
    \Vert \cT_0(w)\Vert_{H^3_r} & \leq\bar d+ \kappa\ell \overbar w^2 \,,
\\
    \Vert \cT_0(w_1)- \cT_0(w_2)\Vert_{ H^3_r} & \leq 2 \kappa\ell\overbar w\Vert w_1-w_2\Vert_{ H^3_r}\,,
\end{align*}
for all $w,w_1,w_2\in B_{\overbar w}$.
Finally, we  select $\bar d$  and $\overbar w$ such that the following conditions hold
\begin{equation}\label{bar R}
 \bar d<\frac{1}{4 \kappa\ell} \qquad \mbox{ and } \qquad   \frac{1-\sqrt{1-4\bar d \kappa\ell}}{2\kappa\ell}\leq \overbar w<\frac{1}{2\kappa\ell}\,.
\end{equation}
With such a choice, we obtain
$ \Vert \cT_0(w)\Vert_{H^3_r} \leq \overbar w$
for all $w\in B_{\overbar w}$
and 
$    \Vert \cT_0(w_1)- \cT_0(w_2)\Vert_{ H^3_r}  < 
    \Vert w_1-w_2\Vert_{ H^3_r}\,,
$
for all $w_1,w_2\in B_{\overbar w}$.
This shows that the operator $\cT_0$ is an operator 
of contraction.
Applying the Banach fixed point theorem \cite[Theorem 5.7]{brezis2010functional} we deduce that the operator 
$\cT_0$  admits a unique fixed point, and therefore 
that there exists a unique solution $w_\infty\in B_{\overbar w}$ to \eqref{sys_equilibre_nln0}.
Now, given $\overbar w$, we deduce the value of $\bar r$. Indeed, since $w_\infty\in H^3(0,L)$ then  we have $w_\infty'\in H^2(0,L)$. Then, according to the embedding of $ H^2(0,L)$ in $C^1([0,L])$, we have $w_\infty'\in C^1([0,L])$. Therefore, according to \cite[Lemma 1]{zheng2018input} we have 
\begin{equation}
\label{lienw*}
   ( w_\infty'(0))^2 \; \leq \; 
   \frac{2}{L}\Vert w_\infty'\Vert_\L^2+L\Vert w_\infty''\Vert_\L^2
  \; \leq\;
   \left(\frac{2}{L}+L\right)\Vert w_\infty\Vert_{H^3(0,L)}^2\,.
\end{equation} 
 Since $w_\infty\in B_{\overbar w}$, then according to \eqref{equivalenceseminorme} and \eqref{lienw*},
and to the definition of $H^3_r(0,L)$,
we obtain 
\begin{equation}\label{rayon}
    r^2 =  ( w_\infty'(0))^2 \leq \kappa \left(\frac{2}{L}+L\right)\overbar w^2\,.
\end{equation} 
Finally, we can choose $\bar r=\overbar w\sqrt{\kappa \left(\frac{2}{L}+L\right)} $. 
Therefore, according to \eqref{bar R} and \eqref{rayon}, we deduce that for any $\overbar w_0\in (0,\overbar w]$,  there exists $d_0>0$ and 
$\bar r_0>0$ so that, for any $(d, r)\in L^2(0,L)\times \RR$
satisfying $\Vert d\Vert_\L\leq \bar d_0$
and $\vert r\vert\leq \bar r_0$ then $\Vert w_{\infty}\Vert_{H^3}\leq \overbar w_0$. This concludes the proof of Lemma~\ref{lemmanonlinear}. 
\end{proof}

\medskip

Now, given  $(d, r)\in L^2(0,L)\times \RR$
satisfying the 
assumptions of Lemma~\ref{lemmanonlinear}, 
let  $(\eta_\infty,w_\infty)$  be the
corresponding 
equilibrium to  system \eqref{sys_reg_closed_loop_nln}
and consider the following change of coordinates
$$
(w,\eta)\mapsto (\widetilde w, \tilde \eta) :=
(w-w_{\infty}, \eta-\eta_{\infty}).
$$
The $(\widetilde w, \tilde \eta)$-dynamics 
is given by
\begin{equation}\label{sys_reg_closed_loop_non}
\left\{
\begin{array}{lll}
\widetilde w_t+ \widetilde w_{x} +\widetilde w_{xxx}+\widetilde w\widetilde w_{x}=-w_{\infty}'\widetilde w-w_{\infty}\widetilde w_{x}\,, &\qquad & (t,x)\in  \RR_+\times [0,L]\,,\\
\widetilde w(t,0)=\widetilde w(t,L) = 0\,, && t\in \RR_+\,,\\
\widetilde w_{x}(t,L)=k\tilde \eta(t)\,, && t\in \RR_+\,, \\
\dot{\tilde \eta} (t) =\widetilde w_x(t,0) \,, && t\in \RR_+\,,\\
\widetilde w(0,x)=\widetilde w_0(x),\;\tilde \eta(0)=\tilde \eta_0\,, && x\in [0,L]\,,
\end{array}
\right.
\end{equation}
where $\widetilde w_0(x)=w_0(x)-w_{\infty}(x) \in H^3(0,L) $ and $\tilde \eta_0=\eta_0-\eta_{\infty} \in \RR$.
In the new coordinates, the regulation objective \eqref{eq:regulation_objective}
for system \eqref{sys_reg_closed_loop_non}
reads 
\begin{equation}\label{eq:regulation_objective2}
  \lim_{t\to\infty}   e(t) =   \lim_{t\to\infty} \widetilde w_x(t,0)  = 0
\end{equation}
Note that showing the 
well-posedness  of system \eqref{sys_reg_closed_loop_non}
is equivalent to prove the well-posedness  of  system \eqref{sys_reg_closed_loop_nln} in the original coordinates $(w,\eta)$.
As a consequence, in the rest of the section, we will focus  
on the system  \eqref{sys_reg_closed_loop_non} in the new coordinates 
$(\widetilde w, \tilde \eta)$.

\medskip

\begin{lemma}\label{theo:well-posedness_nln}
For any $\overbar w_\infty$, there 
exists $k_1^\star>0$ such that, 
for any $k\in (0,k^\star_1]$, for any 
$w_\infty\in H^3(0,L)$
satisfying $\Vert w_\infty\Vert_{H^3} \leq \overbar w_\infty$, 
and  any  initial condition $(\tilde\eta_0,\widetilde w_0)\in D(\cA)$,
 there exists $\tau>0$  such that the Cauchy problem \eqref{sys_reg_closed_loop_non} is well-posed in the space $C^1(0,\tau)\times \big( C([0,\tau];H^3(0,L))\cap L^2([0,\tau];H^{4}(0,L))\big)$.
\end{lemma}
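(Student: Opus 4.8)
The plan is to recast \eqref{sys_reg_closed_loop_non} as a semilinear Cauchy problem and to solve it locally in time by a contraction mapping argument in the Kato-type space $X_\tau := C([0,\tau];H^3(0,L))\cap L^2([0,\tau];H^4(0,L))$ for the $\widetilde w$-component, the $\tilde\eta$-component being recovered a posteriori. First I would absorb the variable-coefficient linear terms $-w_\infty'\widetilde w-w_\infty\widetilde w_x$ into the generator, defining
$$
\cA_{w_\infty}(\eta,w):=\begin{bmatrix} w'(0)\\ -w'-w'''-w_\infty' w-w_\infty w'\end{bmatrix}
$$
on the domain $D(\cA)$ of \eqref{operator_A}. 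Since $w_\infty\in H^3(0,L)\hookrightarrow C^2([0,L])$, the added terms are first order with bounded coefficients, hence a relatively $\cA$-bounded perturbation with arbitrarily small relative bound, via the interpolation inequality $\Vert w'\Vert_\L\leq \varepsilon\Vert w'''\Vert_\L+C_\varepsilon\Vert w\Vert_\L$. I would then select $k_1^\star>0$, depending on $\overbar w_\infty$, so that $\cA_{w_\infty}$ still generates a $C_0$-semigroup: this follows by revisiting the $m$-dissipativity computation of Lemma~\ref{theo:well-posedness} in the equivalent inner product \eqref{lyapunov-scalar-product}, the $w_\infty$-contributions being controlled by Young's inequality whenever $\Vert w_\infty\Vert_{H^3}\leq\overbar w_\infty$ and $k$ is small enough. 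With this reduction, only the genuine quadratic nonlinearity $\widetilde w\widetilde w_x=\tfrac12(\widetilde w^2)_x$ survives as a forcing term, and the time-dependent feedback boundary condition $\widetilde w_x(t,L)=k\tilde\eta$, $\dot{\tilde\eta}=\widetilde w_x(t,0)$ is already encoded in $D(\cA)$.

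Second, I would record the two analytic ingredients. For the linear part, the semigroup generated by $\cA_{w_\infty}$ combined with the gain-of-one-derivative smoothing effect for the KdV operator (as in \cite{rosier,bona2003nonhomogeneous,corcre}) gives, for data $(\tilde\eta_0,\widetilde w_0)\in D(\cA)$ and forcing $g\in L^1(0,\tau;H^3(0,L))$, a unique solution satisfying
$$
\Vert\widetilde w\Vert_{X_\tau}\;\leq\; C\Big(\Vert\widetilde w_0\Vert_{H^3}+|\tilde\eta_0|+\Vert g\Vert_{L^1(0,\tau;H^3)}\Big),
$$
with $C$ uniform for $\tau$ in a bounded interval. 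For the nonlinear part, using that $H^4(0,L)$ is a Banach algebra in dimension one together with $H^3(0,L)\hookrightarrow L^\infty(0,L)$, one has $\Vert\widetilde w^2\Vert_{H^4}\leq C\Vert\widetilde w\Vert_{H^4}\Vert\widetilde w\Vert_{H^3}$, so that after one spatial derivative and Hölder in time
$$
\Vert\widetilde w\widetilde w_x\Vert_{L^1(0,\tau;H^3)}\;\leq\; C\,\tau^{1/2}\,\Vert\widetilde w\Vert_{L^2(0,\tau;H^4)}\,\Vert\widetilde w\Vert_{C([0,\tau];H^3)}\;\leq\; C\,\tau^{1/2}\,\Vert\widetilde w\Vert_{X_\tau}^2 .
$$
The decisive feature is the power $\tau^{1/2}$, which supplies the smallness needed to close the argument.

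Third, I would define the map $\Phi:X_\tau\to X_\tau$ sending $\widetilde w$ to the $\widetilde w$-component of the solution $\zeta=(\tilde\eta,\widetilde w)$ of the abstract equation $\tfrac{d}{dt}\zeta=\cA_{w_\infty}\zeta+(0,-\tfrac12(\widetilde w^2)_x)$ with $\zeta(0)=(\tilde\eta_0,\widetilde w_0)$. Combining the linear estimate with the nonlinear one, and exploiting the bilinear identity $a^2-b^2=(a-b)(a+b)$ for the difference $\Phi(\widetilde w_1)-\Phi(\widetilde w_2)$, I would show that for $\tau$ small enough $\Phi$ maps the ball $\{\Vert\widetilde w\Vert_{X_\tau}\leq R\}$ into itself and is a strict contraction there, with $R$ of the order of the data. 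Banach's fixed point theorem \cite{brezis2010functional} then furnishes the unique solution $\widetilde w\in X_\tau$, uniqueness in the full space being recovered by a standard Gr\"onwall argument. Finally, since $\widetilde w\in C([0,\tau];H^3)\hookrightarrow C([0,\tau];C^2)$, the trace $\widetilde w_x(\cdot,0)$ is continuous in time, so that $\tilde\eta(t)=\tilde\eta_0+\int_0^t\widetilde w_x(s,0)\,ds$ belongs to $C^1(0,\tau)$, which yields the claimed regularity of the pair.

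I expect the main obstacle to be the linear smoothing estimate for the coupled PDE--ODE system with the feedback boundary condition $\widetilde w_x(t,L)=k\tilde\eta$: the results of \cite{rosier,bona2003nonhomogeneous,corcre} are stated for prescribed boundary data, so they must be propagated through the $\eta$-coupling and through the modified generator $\cA_{w_\infty}$, and it is precisely the smallness of $k$ (relative to $\overbar w_\infty$) that keeps the boundary feedback a controlled perturbation and guarantees that $\cA_{w_\infty}$ generates the required semigroup. By contrast, the nonlinear estimate is essentially routine once the algebra property of $H^4(0,L)$ and the $L^2([0,\tau];H^4)$ smoothing are in hand.
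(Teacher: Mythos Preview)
Your outline and the paper's proof both run a Banach fixed-point argument in the Kato space $C([0,\tau];H^3)\cap L^2([0,\tau];H^4)$, but the decompositions are different, and the difference matters.

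The paper does \emph{not} keep the $\tilde\eta$-coupling inside an abstract generator. Instead it substitutes the explicit formula $\tilde\eta(t)=\tilde\eta_0+\int_0^t\widetilde w_x(s,0)\,ds$ into the boundary condition and iterates on $\widetilde w$ alone: the map $\cT_1(\widetilde w)=\varphi$ solves the \emph{linear} variable-coefficient problem $\varphi_t+\varphi_x+\varphi_{xxx}+(w_\infty\varphi)_x=-\widetilde w\widetilde w_x$ with the nonhomogeneous datum $\varphi_x(t,L)=k\big(\tilde\eta_0+\int_0^t\widetilde w_x(s,0)\,ds\big)$. The required linear estimate is then precisely \cite[Proposition~5.1]{bona2003nonhomogeneous} for the nonhomogeneous KdV, and the forcing is measured in $H^1(0,\tau;H^1(0,L))$ via \cite[Lemma~3.1]{bona2003nonhomogeneous} rather than in $L^1(0,\tau;H^3)$. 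The role of $k_1^\star$ is also concrete: because the boundary datum must be estimated in a time-Sobolev norm, its time derivative $k\,\widetilde w_x(\cdot,0)$ contributes a term $C\,k^2\big(\tfrac{2}{L}+L\big)$ to the contraction constant that carries \emph{no} power of $\tau$, and it is this term that forces $k$ small.

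Your plan instead absorbs the coupling into a perturbed generator $\cA_{w_\infty}$ and asks for a Kato smoothing estimate for the coupled PDE--ODE flow. You correctly flag this as the main obstacle, but you do not resolve it: the results in \cite{rosier,bona2003nonhomogeneous,corcre} are stated for prescribed boundary data, and extending the $L^2_tH^4_x$ gain to the feedback-coupled system is exactly the nontrivial step. The paper's decoupling via the integral formula for $\tilde\eta$ is the device that sidesteps this obstacle entirely, reducing everything to off-the-shelf nonhomogeneous estimates. So your approach is not wrong in spirit, but it leaves open the one analytic ingredient that the paper's reformulation renders unnecessary; if you were to push your route through, the cleanest way would likely be to perform the same substitution and recover the paper's argument.
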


\medskip

\begin{proof}
 First, by writing the explicit solution of  $\tilde \eta$ along solutions, that is 
 $\tilde \eta(t) = \tilde\eta_0 + \int_0^t \widetilde w_x(s,0)ds$, we rewrite system 
 \eqref{sys_reg_closed_loop_non} as follows
\begin{equation}\label{sys_reg_closed_loop_non1}
\left\{
\begin{array}{lll}
\widetilde w_t+ \widetilde w_{x} +\widetilde w_{xxx}+\widetilde w\widetilde w_{x} + (\widetilde ww_{\infty})_{x} = 0\,, &\qquad & (t,x)\in \RR_+\times [0,L]\,,\\
\widetilde w(t,0)=\widetilde w(t,L) = 0\,, && t\in \RR_+\,,\\
\widetilde w_{x}(t,L)=k \left(\tilde \eta_0+\int ^t_0 \widetilde w_x(s,0)ds\right)\,, && t\in \RR_+\,, \\
\widetilde w(0,x)=\widetilde w_0(x),\, && x\in [0,L]\,.
\end{array}
\right.
\end{equation}
Now, given $(s,\tau)\in \NN\times \RR$, 
we introduce the space $S^s(\tau):=C([0,\tau];H^s(0,L))\cap L^2([0,\tau];H^{s+1}(0,L))$ 
equipped with the norm  defined as
 $$\Vert w\Vert_{S^{s}(\tau)}^2:= \Vert w\Vert_{C([0,\tau];H^s(0,L))}^2
+ \Vert w\Vert_{L^2([0,\tau];H^{s+1}(0,L))}^2+ \Vert w_x\Vert_{C([0,\tau];\L(0,L))}^2\,.
$$
We consider the operator $\cT_1:S^3(\tau)\to S^3(\tau)$ defined by $\cT_1(\widetilde w)=\varphi$ where $\varphi$ is 
the solution of 
\begin{equation}\label{sys_reg_closed_loop_non2}
\left\{
\begin{array}{lll}
\varphi_t+ \varphi_{x} +\varphi_{xxx}+(w_\infty\varphi)_x=-\widetilde w\widetilde w_{x}\,, &\qquad & (t,x)\in  [0,\tau]\times[0,L]\,,\\
\varphi(t,0)=\varphi(t,L) = 0\,, && t\in [0,\tau]\,,\\
\varphi_{x}(t,L)=k \left(\tilde \eta_0+\int ^t_0 \widetilde w_x(s,0)ds\right)\,, && t\in [0,\tau]\,, \\
\varphi(0,x)=\widetilde w_0(x),\, && x\in [0,L]\,
\end{array}
\right.
\end{equation} with $\tau>0$ and $k>0$  to be chosen later.
With the operator $\cT_1$ so defined, we deduce that
if $\widetilde w$ is a fixed point of $\cT_1$
then 
$\widetilde w\in S^3(\tau)$ is a solution of \eqref{sys_reg_closed_loop_non1}. 
To this end we will apply the Banach fixed-point Theorem.
According to \cite[Proposition 5.1]{bona2003nonhomogeneous}, 
for any $\overbar w_\infty$
there exists $C>0$ that
 such that  
\begin{multline*}
    \Vert \cT_1(\widetilde w)\Vert_{S^3(\tau)}^2  \leq  C\Bigg(\Vert \widetilde w_0\Vert_{H^3(0,L)}^2+ k^2\left(\tau\vert \bar\eta_0\vert^2 + \int^\tau_0\left\vert \int^t_0 \widetilde w_x(s,0)ds \right\vert^2 dt +\Vert w_x(\cdot,0)\Vert_{\L(0,\tau)}^2\right)  \\ 
    \hfill
    +\Vert \widetilde w\widetilde w_{x}\Vert_{H^{1}(0,\tau;H^1(0,L))}^2 \Bigg)
\end{multline*}
for all $\widetilde w \in S^3(\tau)$ and any
$\Vert w_\infty\Vert_{H^3}\leq \overbar w_\infty$.
On the other hand, we have 
$$
    \int^\tau_0\left\vert \int^t_0 \widetilde w_x(s,0)ds \right\vert^2 dt
   \;\leq\; \int^\tau_0\int^t_0 \vert \widetilde w_x(s,0)\vert ^2ds \; \leq \; \tau\Vert\widetilde w_x(\cdot,0)\Vert_{\L(0,\tau)}^2.
$$
Since, $\widetilde w\in S^3(\tau)$, then for all $t\in [0,\tau]$, $\widetilde w(t,\cdot)\in H^3(0,L)$, which imply  
$\widetilde w_x(t,\cdot)\in H^2(0,L)$
for all $t\in [0,\tau]$.  Then, according to the embedding of $ H^2(0,L)$ in $C^1([0,L])$, we have $\widetilde w_x(t,\cdot)\in C^1([0,L])$  for all $t\in [0,\tau]$. Therefore, according to \cite[Lemma 1]{zheng2018input}, 
we obtain
$$
     ( \widetilde w_x(t,0))^2\leq \left(\frac{2}{L}+L\right)\Vert \widetilde w(t,\cdot)\Vert_{H^3(0,L)}^2\,.
$$
for all $t\in [0,\tau]$,
which  implies 
\begin{equation}
\label{pose1}
    \Vert\widetilde w_x(\cdot,0)\Vert_{\L(0,\tau)}^2\leq  \left(\frac{2}{L}+L\right)\Vert \widetilde w\Vert_{\L(0,\tau;H^3(0,L))}^2\leq \left(\frac{2}{L}+L\right) \Vert\widetilde w\Vert_{S^3(\tau)}^2.
\end{equation}   
Then, we have
\begin{equation}\label{pose2}
      \int^\tau_0\left\vert \int^t_0 \widetilde w_x(s,0)ds \right\vert^2 dt\leq \tau\left(\frac{2}{L}+L\right)\Vert \widetilde w\Vert_{\L(0,\tau;H^3(0,L))}^2\leq \tau\left(\frac{2}{L}+L\right) \Vert\widetilde w\Vert_{S^3(\tau)}^2.
\end{equation} 
Also, since $\widetilde w\in S^3(\tau)$, then according to \cite[Lemma 3.1]{bona2003nonhomogeneous}, we deduce the existence of $C>0$ such that 
\begin{equation}\label{pose3}
    \Vert \widetilde w\widetilde w_{x}\Vert_{H^{1}(0,\tau;H^1(0,L))}^2=\Vert \widetilde w\widetilde w_{x}\Vert_{\L(0,\tau;H^1(0,L))}^2+\Vert( \widetilde w\widetilde w_{x})_t\Vert_{\L(0,\tau;H^1(0,L))}^2\\
    \;\leq\;  C(\tau^{\frac{1}{2}}+\tau^{\frac{1}{3}})^2\Vert\widetilde w\Vert_{S^3(\tau)}^4.
\end{equation}
As a consequence, from \eqref{pose1}, \eqref{pose2} and \eqref{pose3}, there exists a positive constant $C>0$ such that
\begin{equation}
    \Vert \cT_1(\widetilde w)\Vert_{S^3(\tau)}^2\leq C\left(\Vert \widetilde w_0\Vert_{H^3(0,L)}+ k^2\tau\vert \bar\eta_0\vert^2 + \left((\tau^{\frac{1}{2}}+\tau^{\frac{1}{3}})^2\Vert\widetilde w\Vert_{S^3(\tau)}^2+\left(\frac{2}{L}+L\right)(k^2\tau+k^2)\right)\Vert\widetilde w\Vert_{S^3(\tau)}^2  \right).
\end{equation}
Moreover, from \eqref{pose1}, \eqref{pose2} and \eqref{pose3}, we obtain  
\begin{equation*}
\begin{array}{l}
     \Vert \cT_1(\widetilde w^1)-\cT_1(\widetilde w^2)\Vert_{S^3(\tau)}^2\leq  
     \\[.5em] 
     \qquad \qquad \leq C\bigg( k^2  \int^\tau_0\left\vert \int^t_0 ( \widetilde w^1_x(s,0)-w^2_x(s,0))ds \right\vert^2 dt +\Vert w^1_x(\cdot,0)-w^2_x(\cdot,0)\Vert_{\L(0,\tau)}^2\   \\[.5em]
      \hfill +\Vert \widetilde w^2(\widetilde w^2_{x}-\widetilde w^1_{x})+\widetilde w^1_x(\widetilde w^2-\widetilde w^1)\Vert_{H^{1}(0,\tau;H^1(0,L))}^2 \bigg)\\
  \qquad \qquad     \leq  C\left(\!\left(\tfrac{2}{L}+L\right)(k^2\tau+k^2)+(\tau^{\frac{1}{2}}+\tau^{\frac{1}{3}})^2\Vert\widetilde w^1\Vert_{S^3(\tau)}^2+(\tau^{\frac{1}{2}}+\tau^{\frac{1}{3}})^2\Vert\widetilde w^2\Vert_{S^3(\tau)}^2\right)\Vert\widetilde w^1-\widetilde w^2\Vert_{S^3(\tau)}^2
    \end{array}    
\end{equation*}
for all $\widetilde w^1, \widetilde w^2 \in S^3(\tau)$.
We consider $\cT_1$ restricted to the closed ball $B_\rho=\{\widetilde w\in S^3(\tau): \Vert\widetilde w\Vert_{S^3(\tau)}\leq \rho\} \subset S^3(\tau)$ with $\rho$ to be chosen later. Then 
$$
     \Vert \cT_1(\widetilde w)\Vert_{S^3(\tau)}^2\leq C\left(\Vert \widetilde w_0\Vert_{H^3(0,L)}^2+ k^2\tau\vert \tilde\eta_0\vert^2 
     +\rho^2\left(\tfrac{2}L+L\right)k^2
          + \left((\tau^{\frac{1}{2}}+\tau^{\frac{1}{3}})^2\rho^2+
          \left(\tfrac{2}{L}+L\right)k^2\tau\right)\rho^2  \right)
$$ 
and 
$$
    \Vert \cT_1(\widetilde w^1)-\cT_1(\widetilde w^2)\Vert_{S^3(\tau)}^2
    \leq C\left(
    k^2 \left(\tfrac{2}{L}+L\right) +
    \left(\tfrac{2}{L}+L\right)k^2\tau+2(\tau^{\frac{1}{2}}+\tau^{\frac{1}{3}})^2\rho^2\right)\Vert\widetilde w^1-\widetilde w^2\Vert_{S^3(\tau)}^2.
$$ 
Finally, we select the constant $\rho, k_1$ and $\tau$
so that to obtain a contractive operator.
For instance, we can select $\rho = \sqrt{3C}\Vert \widetilde w_0\Vert_{H^3(0,L)}$ 
and 
$$
k_1^\star = \sqrt{\dfrac{1}{6C}\left(\dfrac{L}{2+L^2}\right)}
$$
and $\tau>0$  such that 
the following inequalities are satisfied
\begin{align*}
    \tau (k_1^\star)^2\vert \tilde\eta_0\vert^2
&< \Vert \widetilde w_0\Vert^2_{H^3(0,L)} \,,
\\
 \left((\tau^{\frac{1}{2}}+\tau^{\frac{1}{3}})^2\rho^2+\left(\tfrac{2}{L}+L\right)(k_1^\star)^2\tau\right) \rho^2 &< \tfrac12\Vert \widetilde w_0\Vert^2_{H^3(0,L)} \,,
 \\
  \left(\tfrac{2}{L}+L\right)(k_1^\star)^2\tau+2(\tau^{\frac{1}{2}}+\tau^{\frac{1}{3}})^2\rho^2& <\tfrac{1}2\,.
  \end{align*}
It follows that, for any $k\in (0,k_1^\star]$, 
$ \Vert \cT_1(\widetilde w)\Vert_{S^3(\tau)}\leq \rho$
for any $\widetilde w \in B_\rho$
and 
$
\Vert \cT_1(\widetilde w^1)-\cT_1(\widetilde w^2)\Vert_{S^3(\tau)}<\Vert\widetilde w^1-\widetilde w^2\Vert_{S^3(\tau)}
$ for any $\widetilde w^1, \widetilde w^2 \in B_\rho$.
Then, $\cT_1$  is a contraction operator from $B_\rho$  to $B_\rho$. According to the Banach fixed-point theorem, $\cT_1$ admits a unique fixed point.    Its unique fixed point is the desired solution
of \eqref{sys_reg_closed_loop_non1} for $0\leq t\leq \tau$. 
This shows that $\widetilde w$
in \eqref{sys_reg_closed_loop_non2} 
has a unique solution in $S^3(\tau)$.
Since $\widetilde w_x(t,0)$ is continuous on $[0,\tau]$ then 
$\tilde \eta$ is in $C^1(\tau)$
by definition of solution of an 
ODE. This concludes the proof of Lemma~\eqref{theo:well-posedness_nln}.
\end{proof}

\medskip

Note that we  have established the existence of unique classical solution of \eqref{sys_reg_closed_loop_non} locally in time. However, the Lyapunov functional introduced in the 
Section~\ref{sec_reg_linear} needed to establish Lemma~\ref{theo:well-posedness}
and Theorem~\ref{regulation} 
can be used to deduce the existence of unique solution global in time.  Indeed, since the time derivative of the Lyapunov functional will be proved to be non-increasing, this shows that the solution cannot explode for large time, proving thus that the solution exists for any positive time, 
as soon as the initial conditions are small enough.
The next result deals with the local exponential stability of 
the origin of system \eqref{sys_reg_closed_loop_non}.

\medskip

\begin{theorem}[Local Exponential Stability]\label{regulation_nln}
There exist  positive real number 
$k_2^*, \overbar w_\infty$,
 such that, 
for any $k\in (0,k_2^*)$  there exist positive real numbers
$\Delta,\nu_1,b_1$, such that 
for any solution
 to system \eqref{sys_reg_closed_loop_non}
with $w_\infty$ satisfying 
$\Vert w_\infty\Vert_{H^3}\leq \overbar w_\infty$ 
and  initial conditions 
$(\tilde\eta_0,\widetilde w_0)\in D(\mathcal{A})$ satisfying
$|\tilde \eta_0| +  \Vert \widetilde w_0\Vert_{\L}\leq 2\Delta$,
the following inequality holds
$    \Vert (\tilde \eta(t), \widetilde w(t)) \Vert_X\leq b_1e^{-\nu_1 t}\Vert (\tilde\eta_0,\widetilde w_0)\Vert_X$
for all $t\geq0$.
 Moreover
the regulation objective defined in 
\eqref{eq:regulation_objective2}
is satisfied.
\end{theorem}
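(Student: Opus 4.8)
The plan is to reuse the forwarding construction of the linear case. I would keep the same Lyapunov functional $\VV(\tilde\eta,\widetilde w) = V(\widetilde w) + (\tilde\eta - \cM\widetilde w)^2$ from \eqref{lyapunov}, with $V$ the ISS Lyapunov functional of Theorem~\ref{theo} and $\cM$ the solution of the Sylvester equation \eqref{MM-Sylvester}. Since $\VV$ is equivalent to the $X$-norm by \eqref{ve}, it suffices to establish a differential inequality $\dot\VV \le -2\nu_1\VV$ on a small ball $\{\,|\tilde\eta| + \|\widetilde w\|_\L \le 2\Delta\,\}$; the claimed decay with rate $\nu_1$ and constant $b_1$, as well as the invariance of the ball, then follow from Gr\"onwall's lemma exactly as in the proof of Theorem~\ref{regulation}. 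I would differentiate $\VV$ along strong solutions of \eqref{sys_reg_closed_loop_non} and treat its two summands separately.

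For the $V$-part I would observe that the $\widetilde w$-dynamics of \eqref{sys_reg_closed_loop_non} is precisely the perturbed nonlinear KdV equation \eqref{eq_kdv_nonlinear_perturbed} with $a(x) = -w_\infty'(x)$ and $b(x) = -w_\infty(x)$, the boundary value $\widetilde w_x(t,L) = k\tilde\eta$ playing the role of the disturbance $d_2$. Choosing $\overbar w_\infty$ small enough that $\|a\|_\infty \le \bar a$ and $\|b\|_{W^{1,\infty}} \le \bar b$ (legitimate since $w_\infty \in H^3 \hookrightarrow C^2$), Corollary~\ref{corollary:ISS-nonlineaire} yields $\dot V(\widetilde w) \le -\alpha\|\widetilde w\|_\L^2 + \sigma_2 k^2\tilde\eta^2$ for $\|\widetilde w\|_\L$ small. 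For the forwarding term I would compute $\tfrac{d}{dt}(\tilde\eta - \cM\widetilde w)^2 = 2(\tilde\eta - \cM\widetilde w)\big(\widetilde w_x(t,0) - \cM\widetilde w_t\big)$; since $(\tilde\eta,\widetilde w)\in D(\cA)$, the linear part of $\cM\widetilde w_t$ is governed by the identity \eqref{ma}, which collapses $\widetilde w_x(t,0) - \cM\widetilde w_t$ to $-k\tilde\eta$ plus the new contributions $\cM(\widetilde w\widetilde w_x)$, $\cM(w_\infty'\widetilde w)$ and $\cM(w_\infty\widetilde w_x)$. Using that $M$ is smooth with $M(0)=M(L)=0$, an integration by parts moves the $x$-derivatives onto $M$ and yields $|\cM(\widetilde w\widetilde w_x)| \le C\|\widetilde w\|_\L^2$ and $|\cM(w_\infty'\widetilde w)| + |\cM(w_\infty\widetilde w_x)| \le C\|w_\infty\|_{H^3}\|\widetilde w\|_\L$. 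The leading term $-2k\tilde\eta(\tilde\eta - \cM\widetilde w)$ reproduces, after Young's inequality on $k\tilde\eta\,\cM\widetilde w$, the dissipative structure of Lemma~\ref{theo:well-posedness}, i.e. $-k(1 - Ck)\tilde\eta^2 - \tfrac\alpha2\|\widetilde w\|_\L^2$, while the remaining contributions are either cubic in $\|\widetilde w\|_\L$ or carry the small factor $\|w_\infty\|_{H^3}$. Picking $k < k_2^*$, then $\overbar w_\infty$ and $\Delta$ small enough, all of these are absorbed into $-\varepsilon(|\tilde\eta|^2 + \|\widetilde w\|_\L^2)$, giving $\dot\VV \le -2\nu_1\VV$.

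For the regulation objective \eqref{eq:regulation_objective2} I would mirror the time-differentiation argument of Theorem~\ref{regulation}. Multiplying the first equation of \eqref{sys_reg_closed_loop_non} by $\widetilde w$ and integrating by parts, the cubic term $\int_0^L \widetilde w^2\widetilde w_x\,dx$ vanishes by the boundary conditions and the two $w_\infty$-terms partially cancel, leaving
$$\widetilde w_x(t,0)^2 = k^2\tilde\eta(t)^2 - 2\int_0^L \widetilde w\,\widetilde w_t\,dx - \int_0^L w_\infty'\,\widetilde w^2\,dx.$$
The first and third terms tend to $0$ as $t\to\infty$ since $(\tilde\eta,\widetilde w)\to 0$ exponentially in $X$, and the middle term is bounded by $\|\widetilde w\|_\L\|\widetilde w_t\|_\L$, so it remains to show that $\|\widetilde w_t\|_\L$ stays bounded. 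To this end I would differentiate \eqref{sys_reg_closed_loop_non} in time, setting $v := \widetilde w_t$ and $\xi := \dot{\tilde\eta}$; the resulting $(v,\xi)$-system retains the same integral coupling and boundary structure but acquires the time-dependent transport coefficient produced by $(\widetilde w v)_x$, i.e. it is the linearization of \eqref{sys_reg_closed_loop_non} about the (small, decaying) trajectory $\widetilde w$. Controlling this system by the exponential ISS Lyapunov functional for the linearized dynamics granted by Corollary~\ref{corollary:ISS-nonlineaire} would give a uniform bound on $\|v\|_\L = \|\widetilde w_t\|_\L$, hence $\widetilde w_x(t,0)\to 0$ and the regulation objective.

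The main obstacle is precisely this last step: in the linear case of Theorem~\ref{regulation} the time-differentiated dynamics coincided with the original system, so decay of $\widetilde w_t$ was automatic, whereas here the $(v,\xi)$-system carries the coefficient $\widetilde w(t,\cdot)$. To invoke the linearized ISS estimate one needs the perturbation coefficients $-(\widetilde w + w_\infty)$ and $-(\widetilde w_x + w_\infty')$ to remain small in the $W^{1,\infty}$- and $L^\infty$-norms uniformly in time, which demands a uniform-in-time $H^3$-type bound on $\widetilde w$ rather than the mere $L^2$-decay delivered by the first part. Establishing this—presumably through a higher-order energy estimate propagating the smallness of $\|\widetilde w(t)\|_{H^3}$, combined with the already-proven $L^2$-decay—is the crux. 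A secondary, routine difficulty is the cubic bound on the forwarding term $\cM(\widetilde w\widetilde w_x)$, which is what forces the localization in $\|\widetilde w\|_\L$ and hence the restriction to small initial data.
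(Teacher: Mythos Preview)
Your treatment of the exponential stability part is essentially the paper's: same Lyapunov functional $\VV(\tilde\eta,\widetilde w)=V(\widetilde w)+(\tilde\eta-\cM\widetilde w)^2$, same invocation of Corollary~\ref{corollary:ISS-nonlineaire} with $a=-w_\infty'$, $b=-w_\infty$ and $d_2=k\tilde\eta$, same use of the identity \eqref{ma} and integration by parts to reduce $\dot{\tilde\eta}-\cM\widetilde w_t$ to $-k\tilde\eta$ plus the three extra terms, same Young-type absorption of the cubic and $w_\infty$-weighted contributions by choosing $k_2^\star$, $\overbar w_\infty$ and $\Delta$ small.

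The difference is in the regulation step. Your energy identity for $\widetilde w_x(t,0)^2$ is the paper's (including the $\int_0^L w_\infty'\widetilde w^2\,dx$ term), and you correctly isolate the crux as controlling $\Vert\widetilde w_t\Vert_\L$. You propose to run the ISS/forwarding machinery on the time-differentiated $(v,\xi)$-system and flag the resulting need for a uniform $H^3$-type bound on $\widetilde w$ to keep the variable coefficients small. The paper does \emph{not} go this route: it invokes the argument of \cite[Proposition~3.9]{rosier2006global} as a black box to obtain directly
\[
\Vert\widetilde w_t(t,\cdot)\Vert_\L \le C e^{-\mu t}\,\chi(\Vert\widetilde w_0\Vert_\L)\,\Vert\widetilde w_t(0,\cdot)\Vert_\L
\]
for some continuous $\chi$, and then finishes exactly as you do. So your plan is sound and the obstacle you name is real, but the paper resolves it by citation rather than by the higher-order energy estimate you sketch; your approach would be self-contained but longer.
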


\medskip

\begin{proof} The main idea of this proof is to extend the analysis developed
for the linear KdV model in Section~\ref{sec_reg_linear}.
In particular, following the main steps of the 
proof of Lemma~\ref{theo:well-posedness}, we aim at 
building a Lyapunov functional for the overall 
closed-loop system 
 \eqref{sys_reg_closed_loop_nln}
 by relying on Corollary~\ref{corollary:ISS-nonlineaire}. 
 Indeed, setting $a =- w_\infty'$, $b=-w_\infty$ and $d_2=k\eta$,
 system \eqref{sys_reg_closed_loop_non}
 is in the form 
 \eqref{eq_kdv_nonlinear_perturbed}.
 As a consequence, 
 there exist $\delta>0$ and a Lyapunov functional $V$ such that, 
 for any   $\Vert w_\infty'\Vert_{\infty}\leq \bar a$
 and $\Vert w_\infty \Vert_{W^{1,\infty}}\leq\bar b$, 
 the derivative of $V$ along the trajectories 
 of  system \eqref{sys_reg_closed_loop_non}
 satisfies
 \begin{equation}
 \label{eq:ISS-reg}
    \dot{V}(\widetilde w) \leq - \alpha \Vert\widetilde w\Vert^2_\L + 
    \sigma_1 k^2\tilde{\eta}^2 \qquad 
  \forall \, (\tilde \eta,\widetilde w)\in D_\delta(\cA)\,,
 \end{equation}
  with  $D_\delta(\cA) := \{(\tilde \eta,\widetilde w)\in D(\cA) : \Vert (\tilde \eta,\widetilde w)\Vert_{X}\leq \delta\}$.
 Now, we consider the Lyapunov functional $\VV$ defined in \eqref{lyapunov}. We want to show the local exponential stability of the origin of the 
system  \eqref{sys_reg_closed_loop_non}
with the functional $\VV$.
First, note that $ \VV$ 
is uniformly bounded by the norm in the space $X$
of $(\tilde \eta, \widetilde w)$, similar to 
inequality \eqref{ve}. 
Then, using \eqref{eq:ISS-reg}, its derivative along solutions to \eqref{sys_reg_closed_loop_nln} is given by, for any $(\widetilde \eta, \widetilde w)\in D_\delta(\cA)$.
 \begin{align}
 \label{eq:lyap-regV}
    \dot{\VV}(\widetilde \eta,\widetilde w)\leq - \alpha \Vert\widetilde w\Vert^2_\L + \sigma_1 k^2\widetilde\eta^2 + 2 F(\tilde \eta, \widetilde w) 
\end{align}
with $F(\tilde \eta, \widetilde w)  := \big(\tilde\eta-\cM \widetilde w\big)\big(\dot{\tilde\eta}-\cM\widetilde w_t\big)$.
After some integrations by parts, and recalling the property of $\cM$ in \eqref{ma}, we obtain
$$
    \dot{\tilde\eta}-\cM\tilde w_t=-k\tilde \eta(t) +\int^L_0 M(x)\widetilde w(x)w_\infty'(x) dx-\frac{1}{2}\int^L_0 M'(x)\widetilde w(x)^2 dx- \int^L_0 \big(M(x)w_\infty(x)\big)'\widetilde w(x) dx,
$$
from which we obtain
$$
F(\tilde \eta, w)  
= -k\tilde\eta^2 +\tilde\eta
k\cM \widetilde w + \tilde \eta\, \Phi(\widetilde w) - \cM \widetilde w \,\Phi(\widetilde w)
$$
with 
$$
    \Phi(\widetilde w) = \int^L_0 M(x)\widetilde w(x)w_\infty'(x) dx-\frac{1}{2}\int^L_0 M'(x)\widetilde w(x)^2 dx
    - \int^L_0 \big(M(x)w_\infty(x)\big)'\widetilde w(x) dx
$$
According to \eqref{M}, $M\in C^\infty([0,L])$. Therefore $M'$ is bounded on $[0,L]$. Then, using first Cauchy-Schwarz's inequality and then Young's inequality, we bound the terms   in $F$
as follows:
\begin{align*}
    \left\vert k\widetilde\eta\mathcal{M}\widetilde w\right\vert
 & \leq \frac{2k^2\Vert M\Vert^2_\L}{\alpha}\widetilde\eta^2 + \frac{\alpha}{8}\Vert \widetilde w\Vert^2_{\L} \,,
 \\
  |\tilde \eta  \Phi(\widetilde w)| 
& \leq  \frac{k}{2}\tilde\eta^2+\frac{1}{2k}
 \bigg(4\Vert Mw_\infty'\Vert_\L^2+4\Vert (Mw_\infty)'\Vert_\L^2+\Vert M'\Vert_\infty^2\Vert \widetilde w\Vert_\L^2\bigg)
 \Vert \widetilde w\Vert_\L^2 \,,
 \\
 |\cM\widetilde w \Phi(\widetilde w)| 
&\leq  \frac{\alpha}{8}\Vert \widetilde w\Vert^2_\L+\frac{2\Vert M\Vert^2_\L}{\alpha}\bigg(4\Vert Mw_\infty'\Vert_\L^2+4\Vert (Mw_\infty)'\Vert_\L^2+\Vert M'\Vert_\infty^2\Vert \widetilde w\Vert_\L^2\bigg)\Vert \widetilde w\Vert_\L^2\,.
\end{align*}
As a consequence, combining the previous bounds we further obtain
\begin{align*}
F(\tilde \eta, \widetilde w)\leq & \left(\frac{\alpha}{4}+\left(\frac{ 2\Vert M\Vert_\L^2}{\alpha}+\frac{1}{2k}\right)\bigg(4\Vert Mw_\infty'\Vert_\L^2+4\Vert (Mw_\infty)'\Vert_\L^2+\Vert M'\Vert_\infty^2\Vert \widetilde w\Vert_\L^2\bigg)\right)\Vert \widetilde w\Vert_\L^2\\
    &-\frac{k}{2}\left(1- 4k\frac{\Vert M\Vert_\L^2}{\alpha}\right)\widetilde\eta
    ^2\\ 
    \leq & \left(\frac{\alpha}{4}+\left(\frac{2 \Vert M\Vert_\L^2}{\alpha}+\frac{1}{2k}\right)\bigg(8\Vert M\Vert_{W^{1,\infty}}^2\Vert w_\infty\Vert_{H^3}^2+\Vert M'\Vert_\infty^2\Vert \widetilde w\Vert_\L^2\bigg)\right)\Vert \widetilde w\Vert_\L^2\\
    &-\frac{k}{2}\left(1- 4k\frac{\Vert M\Vert_\L^2}{a_1}\right)\widetilde \eta
    ^2
\end{align*} 
where, in the second inequality, we have used  $\Vert Mw_\infty'\Vert_\L^2+\Vert (Mw_\infty)'\Vert_\L^2\leq 2\Vert M\Vert_{W^{1,\infty}}^2\Vert w_\infty\Vert_{H^3}^2$.
Using the previous inequality together with \eqref{eq:lyap-regV} yields
 \begin{align}
    \dot{\mathcal{V}}(\widetilde \eta,\widetilde w)\leq &\left(-\frac{\alpha}{2}+\left(\frac{4 \Vert M\Vert_\L^2}{\alpha}+\frac{1}{k}\right)\bigg(8\Vert M\Vert_{W^{1,\infty}}^2\Vert w_\infty\Vert_{H^3}^2+\Vert M'\Vert_\infty^2\Vert \widetilde w\Vert_\L^2\bigg)\right)\Vert \widetilde w\Vert_\L^2\\\notag
    &+\left (\sigma_1 k^2-k\left(1- 4k\frac{\Vert M\Vert_\L^2}{\alpha}\right)\right)\widetilde \eta ^2.
\end{align}  
for all $(\widetilde\eta,\widetilde w)\in D_\delta(\mathcal{A})$. As a consequence, we can finally fix all the parameters.
In particular, we select 
$$
k^\star_2 = \min\left\{ 
k_0^\star, \; k_1^\star ,  \; 
\left(\dfrac{\alpha}{\alpha \sigma_1+4\Vert M\Vert_\L^2}   \right) 
\right\} 
$$
 with $k_0^\star$ given by Lemma~\ref{theo:well-posedness},
 and $k_1^\star$
 given by Lemma~\ref{theo:well-posedness_nln}.
Moreover, 
given any $k\in (0,k_2^\star)$, select  
$$
\overbar w_\infty= \min\left\{\bar a, \bar b, 
 \sqrt{\dfrac{\alpha}{64\Vert M\Vert_{W^{1,\infty}}^2\left(\frac{ 4\Vert M\Vert_\L^2}{\alpha}+\frac{1}{k}\right)}} \right\},
 $$
 with $\bar a, \bar b$
 given by Corollary~\ref{corollary:ISS-nonlineaire}.
 Moreover, let us define $\overbar \delta \in(0,\delta)$ satisfying
$$ \bar \delta^2\leq \dfrac{\alpha}{8\Vert M'\Vert_\infty^2}\left(\frac{ 4\Vert M\Vert_\L^2}{\alpha}+\frac{1}{k}\right)^{-1}.
$$ 
Using all these bounds we can finally conclude the existence of a positive constant $\varepsilon$ such that 
 \begin{equation}\label{eq:nonlinear_exp_stab_lyap}
     \dot {\VV} \leq  -\varepsilon(\Vert \widetilde w\Vert_\L^2  + \widetilde\eta^2)
     \qquad
     \forall (\tilde \eta, \widetilde w)\in D_{\bar\delta}(\cA)
 \end{equation}
 Finally, standard Lyapunov arguments briefly recalled
 here allows to conclude the result of the proof.
 In particular, consider a $c>0$ small 
 enough such that 
$\Omega_c := \{(\tilde \eta, \widetilde w)\in D(\cA) : \VV(\tilde \eta, \widetilde w) \leq c\} \subset D_{\bar\delta}(\cA)$.
Now consider any solution to \eqref{sys_reg_closed_loop_nln}
starting inside $\Omega_c$.
By Lemma~\ref{lemmanonlinear}
there exists $\tau>0$ such that 
such a solution exists on $[0,\tau]$. 
Let $T\geq\tau$ be its maximal interval of time of existence.
In view of \eqref{eq:nonlinear_exp_stab_lyap}, 
the derivative of $\VV$ is always negative, 
showing that such the solution cannot escape the 
level set $\Omega_c$. Hence, 
its maximal interval of existence is  $[0,\infty)$.
Moreover,  we can conclude the existence of a positive constant
$\Delta\in (0,\bar\delta)$ such that the set 
$D_{\Delta}(\cA)$ is included in the domain of attraction of 
the origin of system \eqref{sys_reg_closed_loop_non}.
Combining the Fr\'echet derivative  
\eqref{eq:frechet} and the Gr\"onwall's lemma 
with \eqref{eq:nonlinear_exp_stab_lyap} one can show the first part of the statement, that is $\Vert (\tilde \eta(t), \widetilde w(t)) \Vert_X\leq b_1e^{-\nu_1 t}\Vert (\tilde\eta_0,\widetilde w_0)\Vert_X$, for all $t\geq 0$ and for all $(\widetilde\eta_0,\widetilde w_0)\in D_\Delta(\cA)$.

Finally, to prove the second part of the statement, 
we can use the same argument as in the proof of \cite[Proposition 3.9]{rosier2006global} to deduce that there exists a continuous nonnegative function $\chi : \mathbb R_+\rightarrow \mathbb{R}_+$ and positive constants $C,\mu$ such that, for all $(\widetilde \eta_0,\widetilde w_0)\in D_\Delta(\cA)$ 
\begin{equation}\label{barwt1}
    \Vert \widetilde w_t(t,\cdot)\Vert_\L\leq Ce^{-\mu t} \chi(\Vert \widetilde w_0\Vert_\L) \Vert \widetilde w_t(0,\cdot)\Vert_\L \,,\qquad \forall t\geq 0\,.
\end{equation}
By  multiplying  the first equation of \eqref{sys_reg_closed_loop_non} by $\widetilde w$ and integrating by parts, we get after some computations
\begin{align*}
\notag
   - k^2\tilde \eta(t)^2+\widetilde w_x(t,0)^2&=-2\int^L_0 \widetilde w(t,x)\widetilde w_t(t,x)dx+\int^L_0\vert\widetilde w(t,x)\vert^2 w_\infty'(x)dx\\
   &\leq 2\Vert\widetilde w(t,\cdot)\Vert_\L\Vert\widetilde w_t(t,\cdot)\Vert_\L + \ell \Vert\widetilde w(t,\cdot)\Vert_\L^2\Vert\widetilde w_\infty\Vert_{H^3(0,L)}.
\end{align*} 
where $\ell$ is the constant of the embedding of $H^3(0,L)$ in $L^\infty(0,L)$. Then,  we can deduce 
$    \lim_{t\to\infty}\vert \widetilde w_x(t,0)\vert= 
     \lim_{t\to\infty}\vert w_x(t,0)-r\vert
 =    0 $
for all $(\tilde \eta_0,\widetilde w_0)\in D_\Delta(\cA)$, concluding the proof.
\end{proof}

Finally, 
by combining the statement 
of Lemma~\ref{lemmanonlinear}
and Theorem~\ref{regulation_nln} we have the following output regulation result
for the  system \eqref{sys_reg_closed_loop_nln}
in the original coordinates $w,\eta$.
The proof is omitted for space reasons.

\medskip
\begin{corollary}[Output Regulation]
There exist  positive real number $k_2^*, \bar d, \bar r$,
 such that, 
 for any disturbance $d$ and reference $r$
 satisfying $\Vert d\Vert_\L\leq \bar d$ and $|r|\leq \bar r$
 and for any $k\in (0,k_2^*)$  
 the output $y$ is asymptotically regulated at the reference $r$, namely 
 \eqref{eq:regulation_objective} is satisfied,
for any solution to system \eqref{sys_reg_closed_loop_nln},
with initial conditions
$(\eta_0,w_0)\in D(\cA)$  sufficiently small
in the norm $\RR\times \L(0,L)$.
\end{corollary}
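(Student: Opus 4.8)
The plan is to chain together Lemma~\ref{lemmanonlinear} and Theorem~\ref{regulation_nln} via the change of coordinates $(w,\eta)\mapsto(\widetilde w,\tilde\eta)=(w-w_\infty,\eta-\eta_\infty)$, the main subtlety being that Theorem~\ref{regulation_nln} requires the initial data to be small \emph{relative to the equilibrium}, whereas the corollary asks for smallness \emph{relative to the origin}.

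First I would fix the gain threshold: take $k_2^*$ and the radius $\overbar w_\infty$ exactly as produced by Theorem~\ref{regulation_nln}, and fix any $k\in(0,k_2^*)$, which in turn fixes the constants $\Delta,\nu_1,b_1$ of that theorem. Next I invoke the second part of Lemma~\ref{lemmanonlinear} with target value $\overbar w_0:=\min\{\overbar w,\overbar w_\infty\}$ to obtain thresholds $d_0,r_0>0$ such that, whenever $\Vert d\Vert_\L\le d_0$ and $|r|\le r_0$, the unique equilibrium $(\eta_\infty,w_\infty)$ furnished by the first part of the lemma satisfies $\Vert w_\infty\Vert_{H^3}\le\overbar w_\infty$. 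Since $\eta_\infty=w_\infty'(L)/k$ and, by the Sobolev trace embedding, $|w_\infty'(L)|$ is controlled by $\Vert w_\infty\Vert_{H^3}$, after possibly shrinking $d_0,r_0$ I may also assume $|\eta_\infty|+\Vert w_\infty\Vert_\L\le\Delta$. I then set $\bar d:=d_0$ and $\bar r:=r_0$.

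With these choices, for every admissible pair $(d,r)$ the equilibrium exists and belongs to $D(\cA)$, its three boundary conditions being precisely those defining $D(\cA)$. As $D(\cA)$ is a linear subspace, if $(\eta_0,w_0)\in D(\cA)$ then $(\tilde\eta_0,\widetilde w_0)=(\eta_0-\eta_\infty,w_0-w_\infty)\in D(\cA)$ as well, and the shifted data generate a solution of \eqref{sys_reg_closed_loop_non}. The triangle inequality then gives
\begin{equation*}
|\tilde\eta_0|+\Vert\widetilde w_0\Vert_\L\;\le\;\big(|\eta_0|+\Vert w_0\Vert_\L\big)+\big(|\eta_\infty|+\Vert w_\infty\Vert_\L\big)\;\le\;\big(|\eta_0|+\Vert w_0\Vert_\L\big)+\Delta,
\end{equation*}
so imposing that the datum be small in the norm of $\RR\times\L(0,L)$, namely $|\eta_0|+\Vert w_0\Vert_\L\le\Delta$, yields $|\tilde\eta_0|+\Vert\widetilde w_0\Vert_\L\le2\Delta$, which is exactly the hypothesis of Theorem~\ref{regulation_nln}.

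Applying Theorem~\ref{regulation_nln} then provides both the exponential decay of $(\tilde\eta,\widetilde w)$ and the regulation objective \eqref{eq:regulation_objective2}, that is $\lim_{t\to\infty}\widetilde w_x(t,0)=0$. Finally I translate back: since $\widetilde w=w-w_\infty$ and the equilibrium satisfies $w_\infty'(0)=r$, one has $\widetilde w_x(t,0)=w_x(t,0)-r=y(t)-r$, so \eqref{eq:regulation_objective} follows. The only genuinely delicate point is the bookkeeping of the smallness thresholds: the same restrictions on $d,r$ must simultaneously ensure existence of the equilibrium, the bound $\Vert w_\infty\Vert_{H^3}\le\overbar w_\infty$ needed to trigger the stability theorem, and the smallness of $(\eta_\infty,w_\infty)$ used in the triangle inequality; everything else is a direct quotation of the two preceding results.
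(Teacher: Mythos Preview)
Your argument is correct and is exactly the combination of Lemma~\ref{lemmanonlinear} and Theorem~\ref{regulation_nln} via the shift $(w,\eta)\mapsto(w-w_\infty,\eta-\eta_\infty)$ that the paper invokes (the proof is omitted there for space reasons). The one minor point worth flagging is that your shrinking of $d_0,r_0$ to force $|\eta_\infty|+\Vert w_\infty\Vert_{L^2}\le\Delta$ makes $\bar d,\bar r$ depend on $k$ (since both $\Delta$ and $\eta_\infty=w_\infty'(L)/k$ do), which does not quite match the quantifier order announced in the corollary; this reflects a looseness in the statement itself rather than a gap in your reasoning.
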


\medskip

\color{black}
\section{Conclusion}
\label{sec:con}
In this article, we have solved the output regulation problem
by means of an integral action for a Korteweg-de-Vries (KdV) equation 
controlled at the boundary and 
subject to some distributed constant disturbances
 so that to regulate a boundary output 
to a given constant reference.
 For this, we have followed a Lyapunov approach. We have first designed an ISS Lyapunov functional which is obtained by strictifying the energy associated to the system. In particular, the energy is modified by adding a second term which is obtained from the design of an observer built with the backstepping technique. Then, thanks to this ISS Lyapunov functional, we have applied the forwarding method to achieve our goal in the context of output regulation.
In particular, we extended the system with an integral action 
and we designed an output feedback controller acting at the boundary.
We show that if the selected gain is sufficiently small then 
the solutions of the closed-loop system converge to an equilibrium. 
Furthermore, for  strong solutions, point-wise convergence of the
regulated output is achieved. Similar results hold locally for the nonlinear
model of the KdV, namely in presence of small references and perturbations  and with a local domain of attraction.\color{black}

For future work, we wish to { study the case in which} $L\in\mathcal{N}$. We believe furthermore that the
proposed strictification approach can be extended also to other classes of PDEs for which a strict Lyapunov functional is not yet known.

\section*{Acknowledgement}
We thank Eduardo Cerpa and Vincent Andrieu for the
fruitful discussions and precious suggestions. This research was partially supported by the French Grant ANR ODISSE (ANR-19-CE48-0004-01) and was also conducted in the framework of the regional programme "Atlanstic 2020, Research, Education and Innovation in Pays de la Loire'', supported
by the French Region Pays de la Loire and the European Regional Development Fund.

 \color{black}

\bibliographystyle{plain}
\bibliography{references}

\end{document}